\documentclass[]{amsart}

\usepackage{amsmath}
\usepackage{amsthm}
\usepackage{amssymb}
\usepackage{amsfonts}
\usepackage{bm}
\usepackage[bookmarks=true,hyperindex,pdftex,colorlinks,citecolor=red, linkcolor=cyan]{hyperref}
\usepackage{bbm}
\usepackage{enumitem}
\usepackage{mathtools}
\usepackage{marginnote}
\usepackage{xcolor}
\usepackage[normalem]{ulem}
\usepackage{tikz-cd}

\DeclareMathOperator{\diam}{diam}

\DeclareMathOperator{\dom}{dom}

\newcommand{\N}{\mathbb{N}}                                 
\newcommand{\R}{\mathbb{R}}                                 
\newcommand{\BST}{\mathcal{ST}}                             

\newcommand{\set}[1]{\left\{{#1}\right\}}                   
\newcommand{\norm}[1]{\left\|{#1}\right\|}                  

\newcommand{\closedball}[1]{B_{#1}}                         
\newcommand{\fmol}[1]{f_{#1}}                               

\definecolor{egraf}{rgb}{0.2,0.4,0}

\newcommand{\restricted}{\mathord{\upharpoonright}}

\newcommand{\eps}{\varepsilon}

\newcommand{\abs}[1]{\left\vert#1\right\vert}
\newcommand{\cardinality}[1]{\abs{#1}}

\renewcommand{\d}[3]{d_{{#2}}^{#3}({#1})}

\newcommand{\Dinf}{D_\infty^\omega}
\newcommand{\dinf}{d_\infty}

\newcommand{\Dinfip}{D^{(i,+)}_{\infty}}
\newcommand{\Dinfim}{D^{(i,-)}_{\infty}}

\newcommand{\ta}{t_\alpha}
\newcommand{\ba}{b_\alpha}
\newcommand{\Da}{D_\alpha}
\newcommand{\da}{d_\alpha}
\newcommand{\ftba}{\fmol{\ta,\ba}}

\newcommand{\xa}[1]{x_\alpha^{#1}}                      
\newcommand{\DapUn}{D^{(1,+)}_{\alpha}}
\newcommand{\DamUn}{D^{(1,-)}_{\alpha}}

\newcommand{\tbn}{t_{\beta_n}}
\newcommand{\bbn}{b_{\beta_n}}

\newcommand{\Db}{D_\beta}
\newcommand{\Dbn}{D_{\beta_n}}
\newcommand{\db}{d_\beta}

\newcommand{\Ta}{T_\alpha}
\newcommand{\Tb}{T_\beta}
\newcommand{\Tbn}{T_{\beta_n}}
\newcommand{\dat}{$\delta$-$\alpha$-tree}

\newcommand{\dast}{$\delta$-$\alpha$-sprawling tree}

\newcommand{\RT}{\mathcal{T}}                           
\newcommand{\RTa}{\mathcal{T}_\alpha}
\newcommand{\RTb}{\mathcal{T}_\beta}
\newcommand{\dab}{$\delta$-$\alpha$-bush}
\newcommand{\dbb}{$\delta$-$\beta$-bush}

\theoremstyle{plain}
\newtheorem{theorem}{Theorem}[section]
\newtheorem*{theorem*}{Theorem}
\newtheorem{lemma}[theorem]{Lemma}
\newtheorem{corollary}[theorem]{Corollary}
\newtheorem*{corollary*}{Corollary}
\newtheorem{proposition}[theorem]{Proposition}

\theoremstyle{definition}
\newtheorem*{definition*}{Definition}
\newtheorem{definition}[theorem]{Definition}
\newtheorem{example}[theorem]{Example}
\newtheorem{question}[theorem]{Question}
\newtheorem*{question*}{Question}

\theoremstyle{remark}
\newtheorem{remark}[theorem]{Remark}

\begin{document}
	
	\title[On metric characterizations of tree indices of Banach spaces]{On metric characterizations of tree and fragmentability indices of Banach spaces}
	
	\author[E. Basset]{Estelle Basset}
	\address[E. Basset]{Universit\'e Marie et Louis Pasteur, CNRS, LmB (UMR 6623), F-25000 Besan\c con, France.}
	\email{estelle.basset@univ-fcomte.fr}

	\author[G. Lancien]{Gilles Lancien}
	\address[G. Lancien]{Universit\'e Marie et Louis Pasteur, CNRS, LmB (UMR 6623), F-25000 Besan\c con, France.}
	\email{gilles.lancien@univ-fcomte.fr}

	\author[A. Proch\'azka]{Anton\'in Proch\'azka}
	\address[A. Proch\'azka]{Universit\'e Marie et Louis Pasteur, CNRS, LmB (UMR 6623), F-25000 Besan\c con, France.}
	\email{antonin.prochazka@umlp.fr}

	\date{}
	
	\subjclass[2020]{46B85, 46B80, 46B20, 46B22, 51F30, 30L05}
	\keywords{Embeddings of metric spaces, diamond graphs, tree indices, Radon-Nikod\'{y}m Property, Infinite Tree property, Point of Continuity Property}
	
	\begin{abstract} We introduce two ordinal indices that are linear invariants for Banach spaces: the dyadic tree index and the sprawling tree index. We show that they are also bi-Lipschitz invariants. In fact, we characterize their values in terms of sub-Lipschitz embeddability of dyadic or countably branching diamond graphs of ordinal height. We derive applications for separable Banach spaces that are universal for complete countable metric spaces and bi-Lipschitz embeddings. We also discuss the links of these tree indices with classical fragmentability indices of Banach spaces such as the dentabilty, weak fragmentability and Szlenk indices. 
		
	\end{abstract}
	
	\maketitle
	
	\section{Introduction}
	
	One of the goals of the non-linear geometry of Banach spaces is to show that some linear properties of Banach spaces are stable under certain non-linear embeddings. An even more ambitious aim is to characterize these linear properties in purely metric terms, that is by properties involving only the distance and forgetting about the vector space operations. 
	In this paper, the non-linear maps that we shall consider will essentially be the bi-Lipschitz embeddings. In this spirit, our work is an attempt to metrically characterize some quantifications of properties like the Radon-Nikod\'{y}m Property (RNP for short) or the Point of Continuity Property (PCP for short). The dentability index $D(X)$ of a Banach space $X$ is an ordinal, that is non decreasing under linear embeddings and  measures how strong is the RNP of this Banach space, or how far it is from failing it. For the PCP, the relevant index is the weak fragmentability index of $X$, denoted $\Phi(X)$. We refer to Section \ref{s:FragmentabilityIndices} for the definitions. 
	
	In \cite{Basset2025}, the first named author constructed diamond metric graphs of countable ordinal height and studied their Lipschitz-free space. In particular she showed that they could have arbitrarily high dentability or weak fragmentability indices. 
	The initial aim of this work was to study the bi-Lipschitz embeddability of these diamond graphs into general Banach spaces, 
	as opposed to
	their canonical embedding into their Lipschitz-free space, 
	seeking 
	metric characterizations of the values of these indices. 
	It turns out that the relevant  properties to consider are closely related but different from 
	the RNP and 
	the PCP.
	Namely, the Infinite Tree Property and what we call the Infinite Sprawling Tree Property. We introduce in Section \ref{s:TreeIndices} the associated indices $DT(X)$ and $ST(X)$. Using tools from 
	descriptive set theory 
	and adapting a fundamental argument of J. Bourgain \cite{Bourgain}, we show that  a separable Banach space $X$ has the Infinite Tree Property if and only if $DT(X)<\omega_1$, and that it has the Infinite Sprawling Tree Property if and only if $ST(X)<\omega_1$, where $\omega_1$ is the first uncountable ordinal. In Section \ref{s:diamonds} we recall the constructions of the dyadic diamond graphs $D_\alpha^2$ and the countably branching diamond graphs $D_\alpha^\omega$, for $\alpha$ countable ordinal. We also extend to these graphs the notions of active pairs and sub-Lipschitz embeddings due to M. I. Ostrovskii \cite{Ostrovskii}. 
	
	After introducing these objects, the main findings of this paper are proved in Sections \ref{s:dyadictrees} and \ref{s:embeddingCountableDiamond}. They are the following.
	
	\begin{theorem}\label{t:A} Let $X$ be a separable Banach space and $\alpha \in (0,\omega_1)$. Assume that $Y$ is a Banach space and that $Y$ bi-Lipschitz embeds into $X$. Then 
		\begin{enumerate}
			\item $DT(X)>\alpha$ if and only if $D_\alpha^2$ sub-Lipschitz embeds into $X$.
			\item $DT(Y)\le DT(X)$
			\item If $X$ fails the Infinite Tree property, so does $Y$. 
		\end{enumerate}
	\end{theorem}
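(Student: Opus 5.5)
Item (1) is the heart of the theorem; items (2) and (3) will follow from it formally.

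\emph{Direction ``$DT(X)>\alpha \Rightarrow D_\alpha^2$ sub-Lipschitz embeds''.} I would argue by transfinite induction on $\alpha$, following the recursive construction of $D_\alpha^2$ recalled in Section~\ref{s:diamonds}.
\begin{itemize}
\item At a successor stage, $D_{\alpha+1}^2$ is obtained by replacing edges of a lower diamond with copies of $D_\alpha^2$, or by gluing two copies of $D_\alpha^2$ in series and in parallel.
\item At a limit stage, the diamond is assembled from the $D_\beta^2$ with $\beta<\alpha$.
\item The index $DT$ is defined through a derivation on sets of $\delta$-separated dyadic trees, so $DT(X)>\alpha$ gives a $\delta$-dyadic tree whose rank exceeds $\alpha$.
\item From such a tree I would define the embedding node by node, in the spirit of Ostrovskii's construction for finite diamonds: the two ``middle'' vertices of a diamond go to $x\pm\delta y$, where $y$ is a branching direction of the tree.
\item The subtrees of rank $\ge\beta$ handle the sub-diamonds $D_\beta^2$.
\item Sub-Lipschitz only requires that active pairs be separated by $\gtrsim\delta$ and that the map be Lipschitz. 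The tree's separation gives the first, and boundedness of the tree (after normalisation) gives the second.
\end{itemize}

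\emph{Converse direction.} Given a sub-Lipschitz embedding of $D_\alpha^2$ with constants $C$ and $\delta$, I would produce a dyadic tree of rank $>\alpha$. The procedure is:
\begin{itemize}
\item Take midpoints of images of active pairs. This is Ostrovskii's observation: if $f(u),f(v)$ are the images of the two middle vertices of a sub-diamond with top $s$ and bottom $t$, then $\tfrac12(f(u)+f(v))$ is close to $\tfrac12(f(s)+f(t))$ only approximately, so some averaging is needed.
\item These midpoints give a dyadic tree whose branching pairs are $\ge c\delta$ apart.
\item A rank computation by induction on $\alpha$ then shows the tree survives $\alpha$ derivations.
\end{itemize}

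\emph{Expected obstacle.} I expect the main difficulty here. The midpoint relation in a diamond is only metric, not linear, so the tree's averaging condition $x_s=\tfrac12(x_{s0}+x_{s1})$ fails. I would fix this in one of two ways:
\begin{itemize}
\item use the flexibility in the definition of $DT$ (trees with approximate averages, or ``bushes''), or
\item pass to a differentiation/averaging argument in the separable space $X$ at limit stages, compensating errors by a summable sequence $\eps_n$.
\end{itemize}
At limit ordinals I would need uniform constants $C,\delta$ across the $D_\beta^2$, which is why a single embedding of $D_\alpha^2$ is used rather than one embedding per $\beta$.

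\emph{Items (2) and (3).} For (2), suppose $g\colon Y\to X$ is bi-Lipschitz and $\alpha<DT(Y)$. By (1) applied to $Y$, the diamond $D_\alpha^2$ sub-Lipschitz embeds into $Y$. If $Y$ is nonseparable, I would first reduce to a separable subspace carrying a witnessing tree, which does not lower the index. Composing with $g$ preserves the sub-Lipschitz property, with constants multiplied by the distortion of $g$. Hence by (1) for $X$ we get $DT(X)>\alpha$, and therefore $DT(Y)\le DT(X)$. For (3), the earlier result of Section~\ref{s:TreeIndices} states that a separable space fails the Infinite Tree Property iff $DT=\omega_1$. Applied to separable subspaces, (2) then yields (3).
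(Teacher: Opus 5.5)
\paragraph{Where the gap is.} Your treatment of (2) and (3) is the paper's: compose a sub-Lipschitz embedding of $D_\alpha^2$ into $Y$ with the bi-Lipschitz map into $X$, then reduce to separable spaces. The gap is in item (1), in the direction ``sub-Lipschitz embedding $\Rightarrow$ $\alpha$-tree''.

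You build the tree from midpoints of images of vertices, relying on the claim that $\frac12\bigl(f(x^0_\alpha)+f(x^1_\alpha)\bigr)$ is approximately $\frac12\bigl(f(t_\alpha)+f(b_\alpha)\bigr)$. Nothing forces this. The sub-Lipschitz conditions only control $\|f(u)-f(v)\|$ on active pairs, and you can push both middle vertices far in a common direction. The discrepancy is then of the order of the upper constant, not small compared with the separation $A$. So there is no approximate tree to repair.

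Your proposed remedies would not help either. A $\delta$-$\alpha$-tree requires exact averages. Bushes characterize the dentability index $D(X)$ (Section~\ref{s:bushes}), which can be $\infty$ while $DT(X)<\omega_1$ (Bourgain--Rosenthal space). Passing to bushes therefore loses exactly what $DT$ measures.

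\paragraph{The missing idea.} Take as tree nodes the normalized increments $f_{uv}=\frac{f(u)-f(v)}{d_\alpha(u,v)}$ along the top--bottom pairs of sub-diamonds; this makes your anticipated obstacle disappear.
\begin{itemize}
\item Since $d_\alpha(t_\alpha,x^i_\alpha)=d_\alpha(x^i_\alpha,b_\alpha)=\frac12$, telescoping gives the exact identity $f_{t_\alpha b_\alpha}=\frac12\bigl(f_{t_\alpha x^i_\alpha}+f_{x^i_\alpha b_\alpha}\bigr)$ for each $i$.
\item Normalizing the upper constant to $1$ puts all these vectors in $B_X$.
\item The pair $\{x^0_\alpha,x^1_\alpha\}$ is active, so $(f_{t_\alpha x^0_\alpha}-f_{x^0_\alpha b_\alpha})-(f_{t_\alpha x^1_\alpha}-f_{x^1_\alpha b_\alpha})=-4\bigl(f(x^0_\alpha)-f(x^1_\alpha)\bigr)$ has norm at least $4A$. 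Hence for one $i$ you get $\frac12\|f_{t_\alpha x^i_\alpha}-f_{x^i_\alpha b_\alpha}\|\ge A$.
\item Keep only that path and recurse on $2f$ restricted to $D_\alpha^{(i,+)}$ and $D_\alpha^{(i,-)}$.
\item At a limit $\alpha$ one has $f_{t_{\beta_n}b_{\beta_n}}=f_{t_\alpha b_\alpha}$, which is exactly the equal-value countable branching of $T_\alpha$.
\end{itemize}
No differentiation, error terms or separability are needed.

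\paragraph{Two smaller points.} First, consider the other direction, where the paper performs the reverse ``integration'' $f(b_\alpha)=0$, $f(t_\alpha)=w_\varnothing$, $f(x^i_\alpha)=\frac12 w_{(i)}$, recursively. Sibling separation only gives the lower bound for $\{x^0,x^1\}$. The lower bounds for the active pairs $\{t,b\}$, $\{t,x^i\}$, $\{x^i,b\}$ of every sub-diamond need the tree nodes bounded away from $0$. The paper gets this by translating and rescaling the tree so that $\|w_s\|\ge\frac12$ for all $s$. Second, in (3) the criterion should read: a separable space fails the Infinite Tree Property iff $DT(X)<\omega_1$ (Theorem~\ref{t:ITP-DT(X)}).
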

	
	\begin{theorem}\label{t:B} Let $X$ be a separable Banach space and $\alpha \in (0,\omega_1)$. Assume that $Y$ is a Banach space and that $Y$ bi-Lipschitz embeds into $X$. Then 
		\begin{enumerate}
			\item $ST(X)>\alpha$ if and only if $D_\alpha^\omega$ sub-Lipschitz embeds into $X$.
			\item $ST(Y)\le ST(X)$
			\item If $X$ fails the Infinite Sprawling Tree property, so does $Y$. 
		\end{enumerate}
	\end{theorem}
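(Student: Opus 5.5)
The plan mirrors the strategy announced for Theorem~\ref{t:A}, with dyadic trees replaced by sprawling (countably branching, weakly null type) trees and $D^2_\alpha$ replaced by $\Dinf$-type countably branching diamonds $D_\alpha^\omega$. Parts (2) and (3) will follow formally from (1) once (1) is established, so the core work is (1).

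\emph{The implication ``$ST(X)>\alpha \Rightarrow D_\alpha^\omega$ sub-Lipschitz embeds''.} I will argue by transfinite induction on $\alpha$. Since $ST(X)>\alpha$, $X$ contains bounded $\delta$-$\alpha$-sprawling trees for some fixed $\delta>0$. I will build maps $f_\alpha\colon D^\omega_\alpha\to X$, with uniformly bounded Lipschitz constant, such that every active pair is mapped to a pair at distance at least $c\,\delta\, d(\text{pair})$, as required in the definition of sub-Lipschitz embedding.

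The construction goes through the recursive structure of $D^\omega_\alpha$.
\begin{itemize}
\item At successor steps $D^\omega_{\beta+1}$ is obtained from $D^\omega_\beta$ by replacing edges with copies of the countably branching diamond $D^\omega_1$. I will place the $\omega$ midpoints of each copy along the sprawling branching at the corresponding tree node: the midpoint $m$ goes to $x\pm$ (branching vectors). The branching vectors are almost mutually far apart, since the tree is $\delta$-separated, so the midpoints of an active pair get separated.
\item At limit steps $\alpha=\sup\beta_n$, the graph $D^\omega_\alpha$ is glued from the $D^\omega_{\beta_n}$. I will use the fact that the index $ST$ of the $\alpha$-tree restricted below nodes splits into $\beta_n$-trees, and glue the maps $f_{\beta_n}$ rescaled by the diameters.
\end{itemize}
I will use the midpoint/averaging structure ($x=\tfrac12(x^+ + x^-)$ up to small error) so that geodesic distances in the diamond are respected up to constants.

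\emph{The converse.} Given a sub-Lipschitz embedding $f$ of $D_\alpha^\omega$, I will extract a sprawling tree of height $\alpha$ from the images of midpoints. For an active pair (top $t$, bottom $b$) with midpoints $m_i$, $f(m_i)$ is roughly at distance $\tfrac12\|f(t)-f(b)\|$ from both ends, while the active-pair condition gives separation between the $f(m_i)$. Following Ostrovskii's argument, I will pass to a subsequence and use separability of $X$ (or weak$^*$ compactness in $X^{**}$) so that $f(m_i)-\tfrac12(f(t)+f(b))$ forms a separated branching. By the triangle inequality and a ``metric midpoint'' argument, $\tfrac12(f(t)+f(b))$ is close to the image of the relevant midpoint, which yields the approximate averaging relation required of a sprawling tree.

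The induction on $\alpha$ then mirrors the derivation defining $ST$. The descriptive-set-theoretic characterization of $ST(X)<\omega_1$, assumed from Section~\ref{s:TreeIndices}, lets me handle the ordinal ranks correctly.

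\emph{Parts (2) and (3).} For (2), if $Y$ bi-Lipschitz embeds into $X$ and $ST(Y)>\alpha$, then by (1) applied to $Y$, $D^\omega_\alpha$ sub-Lipschitz embeds into $Y$. Composing with the bi-Lipschitz map preserves the sub-Lipschitz property, so by (1) again $ST(X)>\alpha$. One caveat: if $Y$ is nonseparable, I first reduce to separable subspaces, using that the definition of $ST$ is separably determined. For (3), if $X$ fails the Infinite Sprawling Tree Property, then by the Bourgain-type result $ST(X)<\omega_1$, hence $ST(Y)<\omega_1$ by (2), and so $Y$ fails the property as well.

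\emph{Main obstacle.} I expect the hardest step to be the converse direction in (1), extracting an honest sprawling tree from a merely sub-Lipschitz image at limit ordinals. There the errors from the approximate averaging accumulate along transfinite gluings. I would first try to control them with geometrically decreasing error parameters $\eps_n$ assigned along the countable cofinal sequences.
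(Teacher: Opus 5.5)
\emph{The main gap is in the direction you yourself flag as hardest: from a sub-Lipschitz embedding to a sprawling tree.} Your plan reads the tree off the positions $f(m_i)$, and it relies on $\frac12(f(t)+f(b))$ being close to the image of a midpoint. That is false for sub-Lipschitz (even bi-Lipschitz) maps, and it contradicts your own separation step. Since $\{m_i,m_j\}$ is active and $d(m_i,m_j)=d(t,b)$, at most one $f(m_i)$ can lie within $\frac A2\,d(t,b)$ of $\frac12(f(t)+f(b))$. Moreover, a sprawling tree needs the exact identities $x_s=\frac12(x_{s^\smallfrown(0,n)}+x_{s^\smallfrown(1,n)})$. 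Nothing in your plan turns approximate averaging into exact averaging, and the $\eps_n$ bookkeeping along cofinal sequences is not an argument.

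The missing idea is to take as tree vectors the normalized increments $f_{uv}=\frac{f(u)-f(v)}{d(u,v)}$: the root is $f_{t_\alpha b_\alpha}$, the child at $(0,i)$ is $f_{x^i_\alpha b_\alpha}$, and the child at $(1,i)$ is $f_{t_\alpha x^i_\alpha}$. Then:
\begin{enumerate}
\item Since $d_\alpha(t_\alpha,x^i_\alpha)=d_\alpha(x^i_\alpha,b_\alpha)=\frac12$, the identity $f_{t_\alpha b_\alpha}=\frac12(f_{t_\alpha x^i_\alpha}+f_{x^i_\alpha b_\alpha})$ holds exactly by telescoping.
\item The active pair $\{x^i_\alpha,x^j_\alpha\}$ gives $\|f_{x^i_\alpha b_\alpha}-f_{x^j_\alpha b_\alpha}\|=2\|f(x^i_\alpha)-f(x^j_\alpha)\|\ge 2A$.
\item The upper bound on active pairs, normalized to $1$, puts every node in $B_X$.
\item The induction is applied to $2f$ restricted to each copy $D_\alpha^{(i,\pm)}$. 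At limit ordinals all the $D^\omega_{\beta_n}$ share the poles, so $f_{t_{\beta_n}b_{\beta_n}}=f_{t_\alpha b_\alpha}$.
\end{enumerate}
This gives a $2A$-$\alpha$-sprawling tree in $B_X$ with no subsequences, separability, weak$^*$ compactness or error terms. Your vectors $f(m_i)-\frac12(f(t)+f(b))$ are in fact $\frac{d(t,b)}{4}(f_{m_ib}-f_{tm_i})$. So they are the right branching data, but only once you divide by the scale and attach them to the root $f_{tb}$ rather than to a point of the image.

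\emph{In the other direction (tree to embedding), your recursion is reversed.} By Definition~\ref{d:Diamonds}, $D^\omega_{\beta+1}$ is the outer $D^\omega_1$ with each edge replaced by a half-scale copy of $D^\omega_\beta$. It is not $D^\omega_\beta$ with its edges replaced by copies of $D^\omega_1$. For $\beta\ge\omega$ the latter does not produce $D^\omega_{\beta+1}$: applied to $D^\omega_\omega$ it gives the $D^\omega_{n+1}$, $n\in\omega$, glued at the poles, which is again a copy of $D^\omega_\omega$. Correspondingly, $S_{\beta+1}$ branches at its root, not at its leaves, so a bottom-up induction never reaches $\omega+1$. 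The construction must be top-down:
\begin{enumerate}
\item First normalize the tree affinely so that $\frac12\le\|w_s\|\le 1$. Otherwise an active pair $\{t',b'\}$ of a sub-copy, whose images differ by $d(t',b')\,w_s$, may collapse.
\item Set $f(b_\alpha)=0$ and $f(t_\alpha)=w_\varnothing$.
\item Put $f=\frac12 f^{(0,i)}_\beta$ on $D_\alpha^{(i,-)}$ and $f=\frac12(w_{(0,i)}+f^{(1,i)}_\beta)$ on $D_\alpha^{(i,+)}$. Here $f^{(0,i)}_\beta$ and $f^{(1,i)}_\beta$ are obtained by induction from the subtrees rooted at $(0,i)$ and $(1,i)$.
\item At limit steps there is no rescaling: the $D^\omega_{\beta_n}$ all have diameter $1$, and the $f_{\beta_n}$ agree at the poles because $w_{(0,n)}=w_\varnothing$.
\end{enumerate}
Your ``midpoint $=$ linear midpoint plus branching vector'' matches this only if the branching vector at node $s$ is $\frac{d(t',b')}{4}(w_{s^\smallfrown(0,i)}-w_{s^\smallfrown(1,i)})$. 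All relations are exact; no ``small error'' appears anywhere.

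Your deduction of (2) and (3) from (1) is correct and is the paper's argument.
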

	
	In the course of the paper, we introduce yet another index -- the convex weak fragmentability index $C(X)$ of a Banach space $X$ -- and the related Banach space property that we call Slice PCP. This property is characterized, for separable Banach spaces, by the condition $C(X)<\omega_1$. The Slice PCP is strictly weaker than RNP and formally stronger than PCP, although we don't know whether 
	they are different. 
	We show that $ST(X)\le C(X)$ and deduce from Theorem \ref{t:B} that a separable Banach space $X$ such that for all $\alpha<\omega_1$, $D_\alpha^\omega$ sub-Lipschitz embeds into $X$ must fail Slice PCP. In particular, a separable Banach space, which is universal for countable complete metric spaces and bi-Lipschitz embeddings must fail Slice PCP. 
	
	In the final Section \ref{Diamonds_L1}, we show that for all $\alpha \in (0,\omega_1)$, $D_\alpha^\omega$ bi-Lipschitz embeds into $L_1$ with distortion 2. As a consequence, a separable  Banach space which contains a bi-Lipschitz copy of $L_1$ must fail Slice PCP. This result is new and cannot be obtained by differentiability arguments.

	\section{Various fragmentability indices and their relations to some Banach spaces properties}\label{s:FragmentabilityIndices}
	
	Let us first introduce some basic notation. 
	All Banach spaces will be over the field $\R$. 
	Given two Banach spaces $X$ and $Y$, we denote $B(X,Y)$ the space of all bounded linear  operators from $X$ to $Y$, and we write $X^* = B(X, \R)$ the topological dual of $X$. We denote the closed unit ball of $X$ by $B_X$, and its unit sphere by $S_X$. 
	
	We denote $\omega$ the first infinite ordinal, which, as it is usual, will also be identified with $[0,\omega)=\N \cup \{0\}$, the set of finite ordinals. Similarly, $\omega_1$ is the first uncountable ordinal, that can be identified with $[0,\omega_1)$, the set of countable ordinals. 
	
	We now describe a general peeling scheme which is frequently used to assign some isomorphically invariant ordinal index to a given Banach space $X$. Assume that $\mathcal A$ is a collection of open subsets of $X$ that is stable under dilations and translations and let $C$ be a subset of $X$. 
	For $\varepsilon>0$ we define the set derivation
	\[
	[C]_\varepsilon'=C \setminus \bigcup \set{A \in {\mathcal A}: \diam(A \cap C)<\varepsilon}
	\]
	and we put
	\[
	[C]_\varepsilon^0:=C, \quad [C]_\varepsilon^{\alpha+1}:=[[C]_\varepsilon^\alpha]_\varepsilon' \quad\mbox{ and }\quad [C]_\varepsilon^{\beta}:=\bigcap_{\alpha<\beta} [C]_\varepsilon^\alpha
	\]
	for every ordinal $\alpha$ and every limit ordinal $\beta$.
	Further we define 
	\[
	\iota_{\mathcal A}(X,\varepsilon):=\inf \set{\alpha: [\closedball{X}]_\varepsilon^\alpha=\emptyset} \mbox{ and } \iota_{\mathcal A}(X):=\sup_{\varepsilon>0} \iota_{\mathcal A}(X,\varepsilon),
	\]
	adopting the convention that $\inf \emptyset = \infty$ and $\alpha < \infty$ for every ordinal $\alpha$. It is clear that $\iota_{\mathcal A}(X)<\infty$ if and only if, for every bounded subset $C$ of $X$ and every $\varepsilon>0$, there exists $A \in \mathcal A$ such that $A \cap C \neq \emptyset$ and $\diam (A \cap C)<\varepsilon$. Then, we say that $X$ is \emph{fragmentable by $\mathcal A$}. Then the value of $\iota_{\mathcal A}(X)$  measures how fast or efficiently this procedure operates and we call it the \emph{$\mathcal A$-fragmentability index of $X$}.
	Naturally, if $\mathcal A$ is a class stable under linear isomorphisms, then $\iota_{\mathcal A}(X)$ is an isomorphic invariant. 
	
	\medskip
	We now detail the three main examples that will be considered in this paper. 
	\begin{enumerate}
		\item If we consider $\mathcal A$ to be the collection of weak open subsets of a Banach space $X$, the index $\iota_{\mathcal A}(X)$ is called the \emph{weak fragmentability index of $X$} and for $C$ a subset of $X$, we denote $\sigma_\varepsilon^\alpha(C):=[C]_\varepsilon^\alpha$. Then $\Phi(X,\varepsilon):=\iota_{\mathcal A}(X,\varepsilon)$ and $\Phi(X):=\iota_{\mathcal A}(X)$. The index $\Phi(X)$ is clearly an isomorphic invariant. 
		\item We call an \emph{open half-space} of a Banach space $X$ a set of the form $S=\{x\in X,\ x^*(x)>a\}$, with $x^*\in X^*$ and $a\in \R$. 
		If $\mathcal A$ is the collection of open half-spaces of $X$, then $\iota_{\mathcal A}(X)$ is called the \emph{dentability index} of $X$ and for $C$ a subset of $X$, we denote $d_\varepsilon^\alpha(C):=[C]_\varepsilon^\alpha$. 
		Then $D(X,\varepsilon):=\iota_{\mathcal A}(X,\varepsilon)$ and $D(X):=\iota_{\mathcal A}(X)$. 
		Again, the index $D(X)$ is  an isomorphic invariant and, since open half-spaces are weakly open, $\Phi(X)\le D(X)$. 
		\item If $X=Y^*$ is a dual Banach space and $\mathcal A$ is the collection of weak$^*$ open sets of $Y^*$, the index $\iota_{\mathcal A}(X)$ is called the \emph{Szlenk index of $Y$} and for $C$ subset of $Y^*$, we denote $s_\varepsilon^\alpha(C):=[C]_\varepsilon^\alpha$. Then $Sz(Y,\varepsilon):=\iota_{\mathcal A}(X,\varepsilon)$ and $Sz(Y):=\iota_{\mathcal A}(X)$. It is important to note that $Sz(Y)$ is an isomorphic invariant for $Y$, but not for $X$, which explains the notation. Since weak$^*$ open sets in $Y^*$ are weakly open, we have that $\Phi(X)\le Sz(Y)$. 
	\end{enumerate}
	
	If $C$ is a convex subset of $X$, then $d_\eps^1(C)$ is still convex. This is not the case for the derivations defining the weak fragmentability index and the Szlenk index. We shall need convexifications of these indices. For that purpose, we introduce two new derivations and associated indices.
	\begin{enumerate}
		\item Let $C$  be a closed bounded convex subset of $X$ and $\eps>0$. We define $\gamma_\eps^1(C)$ to be the closed convex hull of $\sigma_\eps^1(C)$ and inductively $\gamma_\eps^{\alpha+1}(C)=\gamma_\eps^1(\gamma_\eps^\alpha(C))$ and $\gamma_\eps^\alpha(C)=\cap_{\beta<\alpha}\gamma_\eps^\beta(C)$, if $\alpha$ is a limit ordinal. It is clear that the sets $\gamma_\eps^\alpha(C)$ are all closed and convex.  
		Then $C(X,\eps)=\inf\{\alpha,\ \gamma_\eps^\alpha(B_X)=\emptyset\}$ and $C(X)=\sup_{\eps>0}C(X,\eps)$. We call $C(X)$ the \emph{convex weak-fragmentability index} of $X$. This is again an isomorphic invariant. It is clear that 
		$$\Phi(X)\le C(X)\le D(X).$$
		
		\item The dual version of this procedure is more classical and was introduced in \cite{GKL2001}. We recall its definition. Let $K$ be a convex weak$^*$-compact subset of a dual Banach space $X^*$ and $\eps>0$. Then $c_\eps^1(K)$ is the weak$^*$-closed convex hull of $s_\eps^1(K)$. We define analogously $c_\eps^\alpha(K)$ for $\alpha$ ordinal. Then $Cz(X,\eps)=\inf\{\alpha,\ c_\eps^\alpha(B_{X^*})=\emptyset\}$ and $Cz(X)=\sup_{\eps>0}Cz(X,\eps)$. The index $Cz(X)$ is called the \emph{convex Szlenk index} of $X$. 
	\end{enumerate}
	
	For any of the indices that we have introduced, the fact that its value is countable characterizes an important isomorphic property for separable Banach spaces. We refer to Section 2.4 in \cite{BLP1} for a more detailed presentation. Here, we just give the necessary classical definitions and the corresponding characterizations. Let us first recall that a Banach space $X$ has the \emph{Radon-Nikod\'{y}m Property} (RNP in short) if and only if every non empty bounded subset $C$ of $X$ is dentable, i.e. for every $\eps>0$, there exists an open half space $S$ in $X$ such that $S\cap C \neq \emptyset$ and $\diam (S \cap C)<\eps$. Since a separable Banach space has a countable basis of open sets, the following is then easy. 
	
	\begin{proposition}\label{prop:RNP_D(X)} Let $X$ be a separable Banach space. Then $X$ has the RNP if and only if $D(X)<\omega_1$.
	\end{proposition}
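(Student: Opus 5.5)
We prove both implications separately; the fact used throughout is that the derivations $d_\eps^\alpha$ are decreasing in $\alpha$ and that separability makes any strictly decreasing transfinite sequence of closed sets countable.

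\emph{If $X$ has the RNP, then $D(X)<\omega_1$.} Fix $\eps>0$. Each $d_\eps^\alpha(B_X)$ is closed: we remove a union of open half-spaces from a closed set. It is also bounded, and $d_\eps^{\alpha+1}(B_X)\subseteq d_\eps^\alpha(B_X)$. If $d_\eps^\alpha(B_X)\neq\emptyset$, it is dentable by the RNP. So some open half-space $S$ meets it with $\diam(S\cap d_\eps^\alpha(B_X))<\eps$, and $S\cap d_\eps^\alpha(B_X)$ is removed at the next step. Hence the inclusion is strict whenever $d_\eps^\alpha(B_X)\neq\emptyset$.

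Fix a countable basis $(U_n)_n$ of open sets of $X$. For each $\alpha$ with $d_\eps^{\alpha}(B_X)\neq\emptyset$, choose a point $x$ of $d_\eps^\alpha(B_X)\setminus d_\eps^{\alpha+1}(B_X)$. Since $d_\eps^{\alpha+1}(B_X)$ is closed, choose $n(\alpha)$ with $x\in U_{n(\alpha)}$ and $U_{n(\alpha)}\cap d_\eps^{\alpha+1}(B_X)=\emptyset$.

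The map $\alpha\mapsto n(\alpha)$ is injective. Indeed, if $\alpha<\beta$ then $U_{n(\alpha)}$ misses $d_\eps^{\alpha+1}(B_X)\supseteq d_\eps^\beta(B_X)$, while $U_{n(\beta)}$ meets $d_\eps^\beta(B_X)$. Therefore only countably many $\alpha$ have $d_\eps^\alpha(B_X)\neq\emptyset$.

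It follows that $D(X,\eps)<\omega_1$. The ordinal $\omega_1$ is a limit, and intersections over smaller limit stages cannot stay nonempty forever, since the set of ordinals with nonempty derivative is a countable initial segment. Finally, $D(X)=\sup_{\eps>0}D(X,\eps)=\sup_{k}D(X,1/k)$ by monotonicity in $\eps$. This is a countable supremum of countable ordinals, hence $D(X)<\omega_1$.

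\emph{If $D(X)<\omega_1$, then $X$ has the RNP.} Let $C$ be a nonempty bounded set and $\eps>0$. By dilation and translation invariance of the class of half-spaces, we may assume $C\subseteq B_X$. By induction, $[C]_\eps^\alpha\subseteq d_\eps^\alpha(B_X)$: if a half-space meets the larger set in diameter less than $\eps$, it meets the smaller set in diameter less than $\eps$ too, and intersections pass to limit stages. Since $D(X,\eps)<\infty$, some $[C]_\eps^\alpha$ is empty. Take the least such $\alpha$. It is not a limit ordinal, because a decreasing intersection of nonempty sets could be empty otherwise, so we need a more direct argument.

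Take the least $\alpha$ with $C\not\subseteq [C]_\eps^{\alpha+1}$, i.e. $[C]_\eps^{1}\neq C$ already at the first step. In that case some half-space $S$ meets $C$ with $\diam(S\cap C)<\eps$, which is exactly dentability. So it suffices to show $[C]_\eps^1\neq C$. If instead $[C]_\eps^1=C$, then by induction $[C]_\eps^\alpha=C\neq\emptyset$ for every $\alpha$, contradicting $D(X,\eps)<\infty$.

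The only delicate point is the injectivity and countability argument in the first direction. The rest is bookkeeping.
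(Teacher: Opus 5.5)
Your proof is correct and is essentially the argument the paper has in mind. The paper only says the statement is easy because a separable space has a countable basis of open sets, together with its earlier remark that $\iota_{\mathcal A}(X)<\infty$ exactly when every bounded set is dentable. Your injection $\alpha\mapsto n(\alpha)$ and your converse, which uses only $D(X,\eps)<\infty$ and not separability, supply exactly those two points. You should delete the abandoned sentences about ``the least such $\alpha$'' in the second half: they are confused and unnecessary, since your closing observation that $[C]_\eps^1=C$ would force $[C]_\eps^\alpha=C\neq\emptyset$ for every $\alpha$ already finishes the argument.
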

	
	We now recall that a Banach space $X$ has the \emph{Point of Continuity Property} (PCP in short) if and only if every non empty bounded closed subset $C$ of $X$ is weakly fragmentable, meaning that for every $\eps>0$, there exists a weakly open subset $V$ of $X$ such that $V\cap C \neq \emptyset$ and $\diam (V \cap C)<\eps$. Similarly, we have: 
	
	\begin{proposition}
		Let $X$ be a separable Banach space. Then $X$ has the PCP if and only if $\Phi(X)<\omega_1$.
	\end{proposition}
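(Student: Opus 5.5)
We prove both implications; write $\sigma_\eps^\alpha$ for the weak derivation and $\Phi(X,\eps)$ for the associated index.

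\emph{Direct implication.} Assume $X$ has the PCP. Fix $\eps>0$. We first check that the sets $\sigma_\eps^\alpha(B_X)$ are norm closed. At each successor step we remove a union of (relatively) weakly open sets, hence norm open sets. At limit steps we take intersections. Thus every $\sigma_\eps^\alpha(B_X)$ is a bounded closed subset of $X$.

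By the PCP, whenever $\sigma_\eps^\alpha(B_X)\neq\emptyset$ there is a weakly open $V$ with $V\cap \sigma_\eps^\alpha(B_X)\neq\emptyset$ and $\diam(V\cap\sigma_\eps^\alpha(B_X))<\eps$. Hence $\sigma_\eps^{\alpha+1}(B_X)\subsetneq\sigma_\eps^\alpha(B_X)$, so the transfinite sequence is strictly decreasing until it reaches $\emptyset$.

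The key step is to use separability to make this sequence countable. Since $X$ is separable, the norm topology has a countable base $(U_n)_n$. Each nonempty difference $\sigma_\eps^\alpha(B_X)\setminus\sigma_\eps^{\alpha+1}(B_X)$ is relatively open in $\sigma_\eps^\alpha(B_X)$, so it contains a point $x$ and some $U_{n_\alpha}$ with
\[
x\in U_{n_\alpha}\cap \sigma_\eps^\alpha(B_X)\subset \sigma_\eps^\alpha(B_X)\setminus\sigma_\eps^{\alpha+1}(B_X).
\]
We claim $\alpha\mapsto n_\alpha$ is injective. Suppose $\alpha<\beta$ and $n_\alpha=n_\beta=n$. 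The point chosen for $\beta$ lies in $U_n\cap\sigma_\eps^\beta(B_X)$. Since $\sigma_\eps^\beta(B_X)\subset\sigma_\eps^\alpha(B_X)$, this point lies in $U_n\cap\sigma_\eps^\alpha(B_X)$. That set is disjoint from $\sigma_\eps^{\alpha+1}(B_X)\supset\sigma_\eps^\beta(B_X)$, a contradiction.

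Consequently only countably many ordinals $\alpha$ have $\sigma_\eps^\alpha(B_X)\neq\emptyset$, so $\Phi(X,\eps)<\omega_1$. The index is monotone in $\eps$, so the supremum over $\eps>0$ can be taken along $\eps=1/k$. A countable supremum of countable ordinals is countable, hence $\Phi(X)<\omega_1$.

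\emph{Converse.} Assume $\Phi(X)<\omega_1$, so $\Phi(X,\eps)$ is an ordinal for every $\eps>0$. Let $C$ be a nonempty bounded closed set and $\eps>0$. By homogeneity of the weak topology and of the diameter, we may assume $C\subset B_X$, after a translation and a dilation that rescales $\eps$.

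A straightforward transfinite induction gives $C\cap\sigma_\eps^\alpha(B_X)\supseteq\sigma_\eps^\alpha(C)$ for every $\alpha$. In particular, since $\sigma_\eps^\alpha(B_X)=\emptyset$ for $\alpha=\Phi(X,\eps)$, we also get $\sigma_\eps^\alpha(C)=\emptyset$ for some $\alpha$. In particular $\sigma_\eps^1(C)\neq C$. Therefore some weakly open $V$ meets $C$ with $\diam(V\cap C)<\eps$, which is exactly the PCP.

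The only delicate point is the inclusion in the induction. At a successor step, a point $x\in\sigma_\eps^{\alpha+1}(C)$ lies in $C\cap\sigma_\eps^\alpha(B_X)$ by the induction hypothesis. We must show it is not removed from $\sigma_\eps^\alpha(B_X)$. Suppose a weakly open $V\ni x$ satisfies $\diam(V\cap\sigma_\eps^\alpha(B_X))<\eps$. Then $V$ also satisfies
\[
\diam(V\cap\sigma_\eps^\alpha(C))\le\diam(V\cap\sigma_\eps^\alpha(B_X))<\eps,
\]
since $\sigma_\eps^\alpha(C)\subset\sigma_\eps^\alpha(B_X)$. So $V$ would remove $x$ from $\sigma_\eps^\alpha(C)$, contradicting $x\in\sigma_\eps^{\alpha+1}(C)$. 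Limit steps are immediate from intersections.
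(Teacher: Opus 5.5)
Your proof is correct and follows the route the paper has in mind. The paper gives no written proof; it only says the result is easy by the same countable-base argument as for the RNP, together with the observation in the general peeling scheme that $\iota_{\mathcal A}(X)<\infty$ characterizes fragmentability. Your direct implication (norm-closed, strictly decreasing derived sets in a second countable space must vanish at a countable ordinal) and your converse (the monotonicity $\sigma_\eps^\alpha(C)\subset\sigma_\eps^\alpha(B_X)$) carry out exactly these two points. The one step you leave implicit is that $\sigma_\eps^1(C)=C$ would force $\sigma_\eps^\alpha(C)=C\neq\emptyset$ for every $\alpha$, and this follows by an immediate induction.
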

	
	We now recall a classical property of the Szlenk index (see Theorem 1 in \cite{Lancien2006}).
	\begin{proposition} Let $X$ be a separable Banach space. Then the following are equivalent
		\begin{enumerate}
			\item $X^*$ has the RNP,
			\item $X^*$ is separable,
			\item $Sz(X)<\omega_1$. 
		\end{enumerate}
	\end{proposition}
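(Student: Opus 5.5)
The plan is to prove the cycle $(3)\Rightarrow(1)\Rightarrow(2)\Rightarrow(3)$.

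\emph{$(2)\Rightarrow(1)$.} This is classical. A dual space $X^*$ with separable predual has the RNP as soon as it is separable, by the Dunford–Pettis/Stegall theorem: every separable dual space has the RNP.

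\emph{$(1)\Rightarrow(2)$.} Assume $X^*$ has the RNP and $X$ is separable. By Stegall's characterization (for $X$ separable, $X^*$ has the RNP iff $X^*$ is separable), $X^*$ is separable. To argue directly, we use that the RNP in a dual space gives weak$^*$-dentability of weak$^*$-compact convex sets (Namioka–Phelps). Hence every bounded subset of $X^*$ has weak$^*$-open slices of small diameter. Peeling $B_{X^*}$ with countably many weak$^*$-open slices (available because $X$ is separable, so the weak$^*$ topology on $B_{X^*}$ is metrizable and second countable) exhibits $B_{X^*}$ as a countable union of sets of diameter $<\eps$ for every $\eps$. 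This gives norm separability.

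\emph{$(2)\Rightarrow(3)$.} Fix $\eps>0$ and let $\{x_n^*\}$ be norm dense in $X^*$. Cover $X^*$ by the countably many balls $B(x_n^*,\eps/3)$. Suppose $s^\alpha_\eps(B_{X^*})\neq\emptyset$. It is weak$^*$-compact, and by the Baire category theorem, applied to its covering by the weak$^*$-closed sets $B(x_n^*,\eps/3)\cap s^\alpha_\eps(B_{X^*})$ (norm-closed balls are weak$^*$-closed), one of these sets has nonempty relative weak$^*$-interior. That interior is a relatively weak$^*$-open set of diameter $<\eps$, so it is removed at the next step. Hence the derivations strictly decrease while nonempty.

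The key step is to map each nonempty difference $s^\alpha_\eps\setminus s^{\alpha+1}_\eps$ injectively to some index $n$. For instance, send it to the least $n$ such that the ball $B(x_n^*,\eps/3)$ meets $s^\alpha_\eps$ in a relatively weak$^*$-open nonempty set; since that piece is then removed, the same $n$ cannot be selected at a later stage for the same reason. Making this injectivity precise is the main obstacle. I would instead use a countable basis $(V_k)$ of the weak$^*$ topology on $B_{X^*}$ and assign to stage $\alpha$ a pair $(n,k)$ with $\emptyset\neq V_k\cap s^\alpha_\eps\subset B(x_n^*,\eps/3)$. A later stage cannot reuse $(n,k)$, because that set has already been removed.

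So the derivation stops at a countable ordinal, and weak$^*$-compactness (a decreasing intersection of nonempty compact sets is nonempty) handles the limit stages. This gives $Sz(X,\eps)<\omega_1$, and since $Sz(X)=\sup_{m}Sz(X,1/m)$ is a countable supremum of countable ordinals, $Sz(X)<\omega_1$.

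\emph{$(3)\Rightarrow(2)$ (or $\Rightarrow(1)$).} If $Sz(X)<\omega_1$, then for each $\eps$ the countably many layers $s^\alpha_\eps\setminus s^{\alpha+1}_\eps$ are each a union of relatively weak$^*$-open sets of diameter $<\eps$. By second countability of $(B_{X^*},w^*)$, each layer is a countable union of sets of diameter $<\eps$. Therefore $B_{X^*}$ is, for every $\eps$, a countable union of sets of diameter $<\eps$, and so $X^*$ is separable.
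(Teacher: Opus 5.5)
Your proof is correct. The paper gives no argument for this statement; it cites Theorem~1 of Lancien's survey \cite{Lancien2006}. What you wrote is the standard proof behind that reference.

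For $(1)\Leftrightarrow(2)$ you use Dunford--Pettis and Stegall. For $(2)\Leftrightarrow(3)$ you run a Baire category argument on the weak$^*$-compact sets $s^\alpha_\varepsilon(B_{X^*})$, covered by the weak$^*$-closed balls $B(x_n^*,\varepsilon/3)$, and combine it with second countability of $(B_{X^*},w^*)$. This makes $\alpha\mapsto k$ injective: once $V_k\cap s^\alpha_\varepsilon$ is removed, $V_k$ misses all later derived sets. Via the Lindel\"of property, it also turns each of the countably many layers into a countable union of sets of diameter $<\varepsilon$.

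Compared with the citation, this gives a self-contained, elementary proof of $(2)\Leftrightarrow(3)$, while $(1)\Leftrightarrow(2)$ still rests on Stegall's theorem. Your ``direct'' argument for $(1)\Rightarrow(2)$ does not avoid Stegall:
\begin{itemize}
\item weak$^*$-dentability of bounded subsets of $X^*$ is the Namioka--Phelps characterization of Asplund spaces;
\item deducing that $X$ is Asplund from the RNP of $X^*$ is the hard direction of Stegall's theorem;
\item for separable $X$, Asplundness already means $X^*$ is separable, so the peeling there is redundant.
\end{itemize}

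Minor points:
\begin{itemize}
\item The index $n$ in your pair $(n,k)$ is unnecessary.
\item The weak$^*$-compactness remark at limit stages is not needed for countability; it only shows that $Sz(X,\varepsilon)$ is a successor ordinal.
\item The cycle you announce does not match the implications you actually prove. This is harmless, since you prove all of them.
\end{itemize}
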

	The following is proved in \cite{LPR} and will be relevant for this work. 
	
	\begin{theorem}\label{t:LPR} Let $X$ be a separable Banach space. Then $Sz(X)=Cz(X)$. In particular $X^*$ is separable if and only if $Cz(X)<\omega_1$. 
	\end{theorem}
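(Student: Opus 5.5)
The inequality $Cz(X)\le Sz(X)$ is not automatic. For each $\eps$ and $\alpha$ we have $s_\eps^\alpha(B_{X^*})\subseteq c_\eps^\alpha(B_{X^*})$, which gives $Sz(X,\eps)\le Cz(X,\eps)$ and hence $Sz(X)\le Cz(X)$. The content is the reverse inequality. Both indices equal $\infty$ when $X^*$ is nonseparable, since then $Sz(X)=\infty\le Cz(X)$. So I will assume $Sz(X)<\omega_1$, which by the preceding proposition means $X^*$ is separable, and show $Cz(X)\le Sz(X)$. The "in particular" clause then follows immediately from that proposition.

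The plan is to use the standard structure of the Szlenk index: for separable $X$ with $Sz(X)<\omega_1$, one has $Sz(X)=\omega^\gamma$ for some $\gamma<\omega_1$, and the function $\eps\mapsto Sz(X,\eps)$ is submultiplicative, $Sz(X,\eps\eps')\le Sz(X,\eps)Sz(X,\eps')$. It therefore suffices to bound each $Cz(X,\eps)$ by an ordinal strictly less than $\omega^{\gamma}$ when $\gamma$ is a limit, or by something of the form $\omega^{\gamma'}\cdot n$ when $\gamma=\gamma'+1$.

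The key step is a convexification lemma. If $x^*\in c_\eps^\alpha(B_{X^*})$, then $x^*$ is a weak$^*$-limit of convex combinations of points of $s_{\eps}^{\alpha}$-type sets. Using a renorming or an averaging argument, I would show that $c_{K\eps}^{\beta}(B_{X^*})\subseteq \frac12 B_{X^*}+\text{(something)}$, or more precisely $c_{\eps}^{Sz(X,\eps/K)}(B_{X^*})\subseteq (1-\delta)B_{X^*}$. The tool here is the result that when $X^*$ is separable, for every $\eps$ there is an equivalent norm whose dual norm is weak$^*$ uniformly Kadec–Klee. A point that survives $Sz(X,\eps')$ derivations of the convexified process must have norm bounded away from $1$, because weak$^*$-slices of small diameter near the sphere can be removed. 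Since the convex procedure is homogeneous, iterating then gives $c_\eps^{Sz(X,\eps')\cdot n}(B_{X^*})\subseteq (1-\delta)^n B_{X^*}$. This set becomes empty once $(1-\delta)^n$ is small compared to $\eps$, because a set of diameter $<\eps$ is removed entirely. The result is $Cz(X,\eps)\le Sz(X,\eps')\cdot n<\omega^{\gamma}$ when $\gamma$ is a limit, and $\le \omega^{\gamma}$ in general, so $Cz(X)\le Sz(X)$.

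The main obstacle is the convexification lemma: showing that taking weak$^*$-closed convex hulls does not bring back points near the sphere. I would first try a Lancien-type argument with dual norms that are uniformly weak$^*$-Kadec–Klee. If that fails to give a uniform $\delta$, the alternative is to prove directly that the Szlenk derived sets are contained in the set $\{x^*:\ \|x^*\|\le 1-\delta\}$ plus a small weak$^*$-closed neighbourhood. This uses the geometric fact that convex hulls of subsets of a ball of radius $r<1$ remain in that ball.
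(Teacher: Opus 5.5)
The gap is that your key convexification lemma is never proved, and the tool you propose for it does not exist once $Sz(X)>\omega$.

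\textbf{What is fine.} The easy inequality $Sz(X)\le Cz(X)$ is correct, and so is the reduction to $Sz(X)=\omega^\gamma<\omega_1$. The homogeneity iteration $c_\eps^{\beta\cdot n}(B_{X^*})\subseteq(1-\delta)^nB_{X^*}$ is also correct, granted $c_\eps^{\beta}(B_{X^*})\subseteq(1-\delta)B_{X^*}$. The ordinal bookkeeping works.

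\textbf{The lemma is as strong as the theorem.} Everything rests on $c_\eps^{Sz(X,\eps')}(B_{X^*})\subseteq(1-\delta)B_{X^*}$. Suppose $Cz(X)=Sz(X)=\omega^\gamma$. Then $Cz(X,\eps)$, being a successor ordinal, is $<\omega^\gamma$. So you can pick $\eps'$ with $Sz(X,\eps')>Cz(X,\eps)$, and the left-hand side is empty. Your reduction is therefore essentially a restatement.

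\textbf{The proposed tool does not exist.} An equivalent norm with weak$^*$-UKK dual norm exists if and only if $Sz(X)\le\omega$ (Knaust--Odell--Schlumprecht). Even a single-$\eps$ version fails. If $s_\eps^1(B_{X^*})\subseteq(1-\delta)B_{X^*}$ in some equivalent dual norm, your own iteration gives $s_\eps^n(B_{X^*})\subseteq(1-\delta)^nB_{X^*}$. That forces a finite $\eps$-Szlenk index for that norm. Renorming only rescales $\eps$, so this is impossible for small $\eps$ whenever $Sz(X)>\omega$; for instance $X=C([0,\omega^\omega])$ has $Sz(X)=\omega^2$. Your argument is exactly the mechanism giving $Cz(X)\le\omega$ when $Sz(X)\le\omega$, the dual analogue of Proposition~\ref{p:AUC implies SlicePCP}. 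It stops there.

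\textbf{The fallback does not help.} The convexified sequence is not the hull of the Szlenk sequence. We have $c_\eps^2(B_{X^*})=\overline{\mathrm{conv}}^{w^*}\bigl(s_\eps^1(c_\eps^1(B_{X^*}))\bigr)$, and $s_\eps^1$ applied to the larger set $c_\eps^1(B_{X^*})$ can a priori be much larger than $s_\eps^2(B_{X^*})$. So information about the Szlenk derived sets says nothing about $c_\eps^\beta(B_{X^*})$. You would need a comparison between the two derivations that survives transfinitely many hulls. A bound of the form $(1-\delta)B_{X^*}+W$, with $W$ a weak$^*$-neighbourhood of $0$, carries no norm information, because $W$ contains a finite-codimensional subspace. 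It cannot feed your homogeneity iteration.

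\textbf{Where the real difficulty lies.} It is already visible at one step. Let $K$ be convex and $x^*=\sum_i\lambda_ix_i^*$ with $x_i^*\in s_\eps^1(K)$. Moving a single $x_i^*$ inside a neighbourhood only shows $x^*\in s^1_{\lambda_i\eps}(K)$. Moving all the $x_i^*$ at once along weak$^*$-null directions need not add up in norm. Overcoming this loss is the actual content of the theorem. The paper does not prove it; it imports it from \cite{LPR}. My recollection, not something this paper states, is that the comparison there is obtained by analysing $\omega$-iterates of derivations for convexifiable measures of non-compactness, rather than by renorming.
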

	
	In order to characterize the condition $C(X)<\omega_1$, we need to introduce a new property. 
	\begin{definition} Let $X$ be a Banach space. We say that $X$ has the \emph{Slice Point of Continuity Property} (Slice PCP) if for any bounded subset $A$ of $X$ and any $\eps >0$, there exists an open half-space $S$  such that $S\cap A \neq \emptyset$ and any $x\in S \cap A$ admits a weak neighborhood $V$ satisfying $\diam(V\cap A)<\varepsilon$. 
	\end{definition}
	
	The Slice PCP and the condition $C(X)<\omega_1$ are related as follows.
	\begin{proposition}\label{p:SlicePCP}
		Let $X$ be a separable Banach space. Then $C(X)<\omega_1$ if and only if $X$ has the Slice PCP. 
	\end{proposition}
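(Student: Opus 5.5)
We prove the two implications separately, following the scheme of Proposition \ref{prop:RNP_D(X)}. Fix a bounded set $A$ and $\eps>0$. Without loss of generality $A\subset B_X$ (use a dilation; half-spaces and weak neighbourhoods are preserved by dilations and translations). We may also replace $A$ by $\overline{\mathrm{conv}}(A)$ at the cost of adjusting $\eps$. The key observation is that if $C$ is closed, bounded and convex, then
\[
\sigma^1_{\eps}(C)\ \text{is the set of points of }C\text{ admitting no weak neighbourhood }V\text{ with }\diam(V\cap C)<\eps .
\]
Hence $\gamma^1_\eps(C)\neq C$ means that some point of $C$ lies outside $\overline{\mathrm{conv}}(\sigma^1_\eps(C))$. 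By Hahn--Banach there is then an open half-space $S$ with $S\cap C\neq\emptyset$ and $S\cap \gamma_\eps^1(C)=\emptyset$. Every point of $S\cap C$ therefore has a weak neighbourhood $V$ with $\diam(V\cap C)<\eps$. Conversely, if such a slice $S$ exists for $C$, then $S\cap C\cap\sigma^1_\eps(C)=\emptyset$, so $\gamma^1_\eps(C)\subset C\setminus S\subsetneq C$, since $C\setminus S$ is closed and convex.

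\emph{($\Leftarrow$).} Assume $X$ has the Slice PCP. Then for every nonempty closed bounded convex $C$ we get $\gamma^1_\eps(C)\subsetneq C$. So the transfinite sequence $(\gamma^\alpha_\eps(B_X))_\alpha$ is strictly decreasing until it becomes empty. Since $X$ is separable, it has a countable basis $(U_n)$ of norm-open sets. Each strict inclusion $\gamma^{\alpha+1}_\eps(B_X)\subsetneq \gamma^{\alpha}_\eps(B_X)$ can be witnessed by some $U_n$ meeting $\gamma^\alpha_\eps(B_X)$ but not $\gamma^{\alpha+1}_\eps(B_X)$, because the sets are closed. Distinct $\alpha$ give distinct $n$ by monotonicity. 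Hence the sequence stabilizes at a countable ordinal, and stabilization forces emptiness. Thus $C(X,\eps)<\omega_1$ for each $\eps$, and $C(X)=\sup_{k}C(X,1/k)<\omega_1$.

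\emph{($\Rightarrow$).} Assume $C(X)<\omega_1$, and let $A\subset B_X$ be bounded with $C:=\overline{\mathrm{conv}}(A)$. This is the step I expect to be the main obstacle. Slice PCP concerns weak neighbourhoods relative to $A$, whereas the derivation $\gamma_\eps$ acts on $B_X$, not on $C$ or $A$.

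First, the derivation is monotone: if $C\subset D$ then $\sigma^1_\eps(C)\subset\sigma^1_\eps(D)$, since diameters only shrink on subsets. Taking closed convex hulls, $\gamma^\alpha_\eps(C)\subset\gamma^\alpha_\eps(D)$ for every $\alpha$ by induction. Hence $\gamma^\alpha_\eps(C)=\emptyset$ for some $\alpha$.

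Let $\alpha$ be least with $\gamma_\eps^{\alpha}(C)\not\supset A$. This $\alpha$ is a successor $\beta+1$, since limits are intersections. Put $K=\gamma^\beta_\eps(C)\supset A$. Then $A\not\subset \gamma_\eps^1(K)$, so by Hahn--Banach there is a half-space $S$ with $S\cap A\neq\emptyset$ and $S\cap\gamma^1_\eps(K)=\emptyset$. Every $x\in S\cap A\subset K\setminus\sigma^1_\eps(K)$ then has a weak neighbourhood $V$ with $\diam(V\cap K)<\eps$, and so $\diam(V\cap A)<\eps$. This is exactly the Slice PCP. (We did not actually need $A$ to be convex.)
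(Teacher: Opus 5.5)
Your proof is correct and is essentially the paper's. The direction ``Slice PCP $\Rightarrow C(X)<\omega_1$'' uses the same countable-basis argument, which you spell out. Your direct proof of the converse is the contrapositive of the paper's argument, which shows, via the same Hahn--Banach separation and the monotonicity $\sigma^1_\eps(A)\subset\sigma^1_\eps(\overline{\mathrm{conv}}(A))$, that failure of Slice PCP at $(A,\eps)$ forces $\gamma^1_\eps(\overline{\mathrm{conv}}(A))=\overline{\mathrm{conv}}(A)$.

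Your least-ordinal step is redundant: $K=\gamma^\beta_\eps(C)$ is closed and convex with $A\subset K\subset C=\overline{\mathrm{conv}}(A)$, so $K=C$ and necessarily $\beta=0$.
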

	
	\begin{proof}
		Assume first that $X$ has the Slice PCP and let $C$ be a closed bounded convex non empty subset of $X$ and $\eps>0$. Then there exists an open half-space $S$  such that $S \cap C \neq \emptyset$ and any $x\in S\cap C$ admits a weak neighborhood $V$ satisfying $\diam(V\cap C)<\varepsilon$. Thus $\sigma_\eps^1(C)\subset C\setminus S$. Since $C \setminus S$ is closed and convex, we deduce that $\gamma_\eps^1(C)\subset C\setminus S \neq C$. Since $X$ has a countable basis of norm open sets, it readily follows that for all $\eps>0$, $C(X,\eps)<\omega_1$ and thus that $C(X)<\omega_1$. 
		
		Assume now that $X$ fails the Slice PCP. So, there exists a bounded subset $A$ of $X$ and $\eps >0$ such that for any  open half-space  $S$ with $S\cap A\neq \emptyset$ we have $\sigma_\eps^1(A)\cap S \neq \emptyset$. 
		We may assume that $A \subset B_X$ and we let $C$ be the closed convex hull of $A$.
		We claim that $\gamma_\eps^1(C)=C$; this implies that for all ordinals $\alpha$, $C\subset \gamma_\eps^\alpha(B_X)$ and $C(X)=\infty$.
		So, let us assume that there is $x \in C \setminus \gamma_\eps^1(C)$.
		Let $T$ be an open half-space such that $x \in T\cap C$ and $T \cap \gamma_\eps^1(C)=\emptyset$. 
		It is then easy to see that $T\cap A$ is non empty. 
		Thus $\emptyset \neq \sigma_\eps^1(A)\cap T \subset \sigma_\eps^1(C)\cap T \subset \gamma_\eps^1(C)\cap T$ which is the desired contradiction which proves our claim. 
	\end{proof}
	
	In order to give interesting examples of spaces with the Slice PCP, we need to recall the following definition.
	
	\begin{definition} Let $(X,\|\ \|)$ be a Banach space. We say that $\|\ \|$ is asymptotically uniformly convex (AUC) if for any $\eps>0$, there exists $\delta>0$, such that for all $x\in X$ such that $\|x\|> 1-\delta$, we have that $x$ admits a weak neighborhood $V$ so that $\diam(V\cap B_X)<\eps$.
	\end{definition}
	
	\begin{proposition}\label{p:AUC implies SlicePCP} Let $X$ be a Banach space with an equivalent AUC norm. Then $C(X)\le \omega$ and $X$ has the Slice PCP. 
	\end{proposition}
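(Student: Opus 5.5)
Since $C(X)$ is an isomorphic invariant, I would first replace the norm of $X$ by the equivalent AUC norm, so that $B_X$ is the unit ball of an AUC norm. The target is to show that for every $\eps>0$ there is an integer $n(\eps)$ with $\gamma_\eps^{n(\eps)}(B_X)=\emptyset$. Then $C(X,\eps)<\omega$ for every $\eps$, hence $C(X)\le\omega$. The Slice PCP will follow without Proposition~\ref{p:SlicePCP}, which needs separability: once $C(X)<\infty$, the first part of that proof runs backwards. Given a bounded $A$, say $A\subset B_X$, let $\alpha$ be the first ordinal with $\overline{\mathrm{conv}}(A)\not\subset\gamma_\eps^{\alpha}(B_X)$ (it is a successor). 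Separate a point of $A$ from the closed convex set $\gamma_\eps^{\alpha}(B_X)$ by an open half-space $S$. Every $x\in S\cap A$ lies in $\gamma_\eps^{\alpha-1}(B_X)\setminus\sigma_\eps^1(\gamma_\eps^{\alpha-1}(B_X))$, so $x$ has a weak neighbourhood $V$ with $\diam(V\cap A)<\eps$.

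The key step is a homogeneity estimate for one convexified derivation. Fix $\eps>0$ and let $\delta=\delta(\eps/2)$ be given by the AUC definition for $\eps/2$. I claim that
\[
\gamma_\eps^1(rB_X)\subset r(1-\delta)B_X
\qquad\text{for every } r>0 .
\]
By scaling, if $\|x\|>r(1-\delta)$ and $\|x\|\le r$, then $x/r$ has a weak neighbourhood $V$ with $\diam(V\cap B_X)<\eps/2$. Thus $rV$ is a weak neighbourhood of $x$ with $\diam(rV\cap rB_X)<r\eps/2$, which is $<\eps$ when $r\le 2$; I only ever need $r\le1$. Hence $\sigma_\eps^1(rB_X)\subset r(1-\delta)B_X$, and taking the closed convex hull keeps it inside this closed convex ball. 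This convexity of the bound is exactly why balls are the right objects to iterate on. There is a subtlety: $\delta$ must serve for the fixed $\eps$ while $r$ shrinks. But as $r$ decreases the required diameter $\eps/r$ only grows, so the same $\delta$ works for all $r\le1$.

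Next I would iterate using the monotonicity of $\gamma_\eps^1$: if $C\subset D$ then $\sigma_\eps^1(C)\subset\sigma_\eps^1(D)$. This is immediate, since a weak neighbourhood that is small on $D$ is small on $C$; hence $\gamma_\eps^1(C)\subset\gamma_\eps^1(D)$. By induction, $\gamma_\eps^k(B_X)\subset(1-\delta)^kB_X$. Once $(1-\delta)^k<\eps/2$, the set $\gamma_\eps^k(B_X)$ has diameter less than $\eps$. Therefore every point of it has the whole space as a weak neighbourhood of small diameter, and the next derivation is empty. This gives $C(X,\eps)\le k+1<\omega$.

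The main obstacle is the key estimate: checking carefully that the AUC condition, which is stated only for the unit ball and only for $\|x\|>1-\delta$, transfers correctly to the scaled balls $rB_X$ with the fixed threshold $\eps$. The other points to watch are the monotonicity of $\sigma_\eps^1$ (fine because the sets are nested) and the fact that the closed convex hull does not enlarge the ball bound.
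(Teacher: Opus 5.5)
Your proof is correct and follows the paper's argument: pass to the AUC norm, show that $\gamma_\eps^1$ sends a small ball $rB_X$ into $r(1-\delta)B_X$, iterate to get $\gamma_\eps^n(B_X)\subset(1-\delta)^nB_X$, and conclude that $C(X,\eps)<\omega$ (your stated range ``for every $r>0$'' should read $0<r\le 2$, as your own argument shows). For the Slice PCP, the paper simply cites the implication $C(X)<\omega_1\Rightarrow$ Slice PCP of Proposition~\ref{p:SlicePCP}, which is proved by contraposition and does not use separability, so your direct derivation is valid but not actually needed.
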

	
	\begin{proof} Assume, as we may, that the norm of $X$ is AUC. Let $\eps>0$. There exists $\delta>0$, such that for all $x\in X$ such that $\|x\|> 1-\delta$, we have that $x$ admits a weak neighborhood $V$ so that $\diam(V\cap B_X)<\eps$. It follows that $\sigma_\eps^1(B_X)\subset (1-\delta)B_X$ and thus $\gamma_\eps^1(B_X)\subset (1-\delta)B_X$. Then, by a simple rescaling argument $\gamma_\eps^n(B_X)\subset (1-\delta)^nB_X$, for all $n\in \N$. Thus, for $n$ large enough, $\diam (\gamma_\eps^n(B_X))<\eps$ and $\gamma_\eps^{n+1}(B_X)=\emptyset$. Therefore, for all $\eps>0$, $C(X,\eps)<\omega$, where $\omega$ is the first infinite ordinal, and $C(X)\le \omega$. It follows from the previous proposition, and more precisely from the implication that does not use the separability assumption, that $X$ has the Slice PCP. 
	\end{proof}
	
	\begin{remark} It is obvious that RNP implies Slice PCP and that Slice PCP implies PCP. We do not know whether PCP implies Slice PCP. However there exists a Banach space with Slice PCP and failing RNP. Indeed, the predual of the James Tree space is known to fail RNP \cite{LindenstraussStegall}. On the other hand, it has been shown by M. Girardi  \cite{Girardi} that it has an equivalent AUC norm. So, by Proposition \ref{p:AUC implies SlicePCP}, it has Slice PCP. 
	\end{remark}
	
	
	\section{Tree indices and related Banach space properties}\label{s:TreeIndices}
	
	In this section, we introduce two new ordinal indices, related to the existence of generalized dyadic trees and what we will call sprawling trees (hoping this image will help the intuition). We start with a construction ``by hands'' of these trees, as they will be instrumental for proving our results on the existence of new metric invariants. We conclude the section with a more global approach, that will allow us to use tools from descriptive set theory.

	Before to start, we need to introduce more notation. We let $\omega^{<\omega}$ be the set of finite sequences in $\omega=\N \cup \{0\}$, including the empty sequence $\varnothing$. Similarly $2^{<\omega}$ is the set of finite sequences in $\{0,1\}$. For $s=(s_1,\ldots,s_n) \in \omega^{<\omega}$, $|s|=n$ is the \emph{length} of $s$, and
	$|\varnothing|=0$. The natural order on $\omega^{<\omega}$ is denoted $\preceq$ and for $s,t \in \omega^{<\omega}$, $s \preceq t$, if $t$ extends $s$. 
	
	
	\subsection{Generalized dyadic trees}
	\label{s: delta-alpha-trees}
	We recall that a subset of $T$ of $\omega^{<\omega}$ is a \emph{tree} on $\omega$ if it satisfies the following property: for any $t\in T$ and any $s\preceq t$, we have $s\in T$. We now define a special family $(T_\alpha)_{\alpha<\omega_1}$ of trees on $\omega$ as follows.
	
	\begin{itemize}
		\item $T_0 =\set{\varnothing}$.
		\item $T_{\alpha+1}=\set{\varnothing} \cup 0^\smallfrown T_\alpha \cup 1^\smallfrown T_\alpha$ (notice that $s^\smallfrown \varnothing=s$) for every $\alpha$ such that $T_\alpha$ has been constructed. 
		\item Let $\alpha<\omega_1$ be a limit ordinal and fix $(\beta_n)_{n=0}^\infty$  an enumeration of all ordinals less than $\alpha$. Then $T_\alpha=\set{\varnothing}\cup \bigcup\limits_{n=0}^\infty n^\smallfrown T_{\beta_n}$. 
	\end{itemize}
	Our arguments below are independent of the particular enumeration fixed at the third item above. We abusively call these trees \emph{dyadic trees}, although they may be countably branching at some vertices. However, most vertices have two successors. The following obvious lemma describes this. 
	
	\begin{lemma}\label{l:dyadictrees}
		For every $\alpha<\omega_1$ and every $s \in T_\alpha$, there are three possibilities:
		\begin{itemize}
			\item $s$ has 0 successors (in other words $s$ is maximal in $T_\alpha$ for $\preceq$). \item $s$ has exactly 2 successors (this is the case in particular when $s=\varnothing$ and $\alpha$ is a successor ordinal).
			\item The set of successors of $s$ is $\{s^\smallfrown n,\ n \in \omega\}$ (this is the case in particular when $s=\varnothing$ and $\alpha$ is a limit ordinal).
		\end{itemize}
	\end{lemma}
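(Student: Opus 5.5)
We argue by transfinite induction on $\alpha<\omega_1$, proving simultaneously for every $\alpha$ the statement: for every $s\in T_\alpha$, the set of successors of $s$ in $T_\alpha$ (that is, elements of the form $s^\smallfrown n$ belonging to $T_\alpha$) is either empty, equal to $\{s^\smallfrown 0, s^\smallfrown 1\}$, or equal to $\{s^\smallfrown n,\ n\in\omega\}$. Note that "exactly 2 successors" will in fact always mean the successors $s^\smallfrown 0$ and $s^\smallfrown 1$, which is the slightly stronger form that carries through the induction.

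The base case $T_0=\{\varnothing\}$ is immediate: $\varnothing$ has no successors. For the successor step, take $T_{\alpha+1}=\{\varnothing\}\cup 0^\smallfrown T_\alpha\cup 1^\smallfrown T_\alpha$. The root $\varnothing$ has successors $(0)$ and $(1)$, both present since $\varnothing\in T_\alpha$, and no others; this gives the parenthetical claim. Any other $s\in T_{\alpha+1}$ has the form $i^\smallfrown u$ with $i\in\{0,1\}$ and $u\in T_\alpha$. Then $s^\smallfrown n\in T_{\alpha+1}$ iff $u^\smallfrown n\in T_\alpha$, because the first coordinate determines which copy we are in. So the successor set of $s$ is $i^\smallfrown(\text{successor set of } u \text{ in } T_\alpha)$, and the induction hypothesis applies.

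For the limit step, take $T_\alpha=\{\varnothing\}\cup\bigcup_n n^\smallfrown T_{\beta_n}$. The root has successors $(n)$ for every $n\in\omega$, since $\varnothing\in T_{\beta_n}$; this gives the parenthetical claim for limits. Any other $s$ is uniquely of the form $n^\smallfrown u$ with $u\in T_{\beta_n}$, and exactly as before its successors correspond to those of $u$ in $T_{\beta_n}$ with $\beta_n<\alpha$. The induction hypothesis for $\beta_n$ therefore concludes.

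There is no genuine obstacle here. The only point needing care is the observation that the decomposition of each nonempty $s$ by its first coordinate is unique, so that the copies $n^\smallfrown T_{\beta_n}$ do not interact. Along the way, the induction also confirms that each $T_\alpha$ is indeed a tree, i.e. closed under initial segments.
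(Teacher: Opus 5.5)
Your proof is correct and follows the intended route. The paper gives no proof, calling the lemma obvious; for the analogous sprawling-tree lemma it says the result ``follows from a straightforward transfinite induction''. Your induction on $\alpha$ is exactly that argument: the unique first-coordinate decomposition of each nonempty $s$ makes its successor set a translate of a successor set in $T_\alpha$ or $T_{\beta_n}$. Your strengthened form, in which two successors always means $s^\smallfrown 0$ and $s^\smallfrown 1$, is worth keeping, since Definition~\ref{def: dat} uses precisely that form.
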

	
	We now turn to the definition of a {\dat} in a Banach space. 
	
	\begin{definition}
		\label{def: dat}
		Let $X$ be a Banach space. For $\delta > 0$ and $\alpha< \omega_1$, we call \textit{\dat}\ of $X$ a family $(x_s)_{s \in \Ta} \subset X$ such that, for every $s \in \Ta$, the following conditions are satisfied.
		\begin{enumerate}
			\item[$\bullet$] if $s$ has two successors $s^\smallfrown 0$ and $s^\smallfrown 1$ in $\Ta$, then $x_s = \frac12(x_{s^\smallfrown 0} + x_{s ^\smallfrown 1})$ with $\norm{x_{s^\smallfrown 0} - x_{s}}= \norm{x_{s^\smallfrown 1} - x_{s}}=\frac12\norm{x_{s^\smallfrown 0} - x_{s^\smallfrown 1}} \geq \delta$.
			
			\item[$\bullet$] if $s$ has infinitely countably many successors $s^\smallfrown n$ in $\Ta$, then $x_s = x_{s^\smallfrown n}$ for every $n \in \omega$.
		\end{enumerate}
	\end{definition}
	
	The above gives naturally rise to the following family of isomorphic properties.
	
	\begin{definition}\label{d:dyadic-tree-index} 
		Let $X$ be a separable Banach space and $\alpha \in [0,\omega_1)$. We say that $X$ has the \emph{$\alpha$-tree property} if there exists $\delta>0$ such that $B_X$ contains a {\dat}. Then we define the \emph{dyadic-tree index} of $X$ as follows. For $\delta>0$, $DT(X,\delta)=\inf\{\alpha<\omega_1,\ B_X\ \text{does not contain a {\dat}}\}$ if it exists and $DT(X,\delta)=\infty$ otherwise, and $DT(X)=\sup\{DT(X,\delta),\ \delta>0\}$. 
	\end{definition}
	
	We conclude with a simple link between the dentability index and the dyadic-tree index. 
	
	\begin{proposition}
		\label{prop: link between dentability index and d-a-t}
		Let $X$ be a Banach space. Then $DT(X)\le D(X)$. More precisely, if $DT(X,\delta)>\alpha$, for $\delta>0$ and $\alpha<\omega_1$, then $D(X,\delta)>\alpha$. 
	\end{proposition}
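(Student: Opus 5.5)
We will show by transfinite induction on $\alpha<\omega_1$ the following statement: \emph{if $(x_s)_{s\in T_\alpha}$ is a {\dat} contained in a set $C\subset X$, then $x_\varnothing\in d_\delta^\alpha(C)$.} Applying this with $C=B_X$ gives $d_\delta^\alpha(B_X)\neq\emptyset$, and hence $D(X,\delta)>\alpha$. Taking the supremum over $\delta$ then yields $DT(X)\le D(X)$.

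More precisely, we prove the following statement $P(\alpha)$: for every {\dat} $(x_s)_{s\in T_\alpha}$ and every $\beta\le\alpha$, each $x_s$ with $s$ of ``rank'' at least $\beta$ lies in $d_\delta^\beta(C)$. To avoid tracking ranks, it is simpler to prove directly $x_\varnothing\in d_\delta^\alpha(C)$ for every $C$ containing the tree. The key observation is that for $s\in T_\alpha$ the subfamily $(x_{s^\smallfrown t})_{t}$, indexed by $\{t: s^\smallfrown t\in T_\alpha\}$, is itself a $\delta$-$\beta$-tree for the appropriate $\beta$. This holds by the recursive definition of $T_\alpha$: the children subtrees are copies of $T_\beta$ with $\beta+1=\alpha$, or of $T_{\beta_n}$ with $\beta_n<\alpha$.

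The induction runs as follows.

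\emph{Case $\alpha=0$.} This is trivial, since $x_\varnothing\in C=d_\delta^0(C)$.

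\emph{Successor case $\alpha=\beta+1$.} The families $(x_{0^\smallfrown t})_{t\in T_\beta}$ and $(x_{1^\smallfrown t})_{t\in T_\beta}$ are $\delta$-$\beta$-trees in $C$. By induction, $x_0,x_1\in d_\delta^\beta(C)$. Now $x_\varnothing=\frac12(x_0+x_1)$ with $\|x_0-x_1\|\ge2\delta$. Every open half-space $S$ containing $x_\varnothing$ contains $x_0$ or $x_1$, by linearity of the functional, and that point is at distance $\ge\delta$ from $x_\varnothing$. Since $x_\varnothing$ lies in $d_\delta^\beta(C)$, the set $S\cap d_\delta^\beta(C)$ contains two points at distance $\ge\delta$, so its diameter is $\ge\delta$.

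Here we need $x_\varnothing\in d_\delta^\beta(C)$ itself, which requires a slightly strengthened induction hypothesis: \emph{every $x_s$, $s\in T_\alpha$, belongs to $d_\delta^{\gamma}(C)$ for all $\gamma\le$ the height of the subtree at $s$}. A simpler fix is to note that $d_\delta^\beta(C)\supset d_\delta^{\alpha}(C)$ is not yet known. So we instead strengthen the claim to: $x_\varnothing\in d_\delta^\gamma(C)$ for all $\gamma\le\alpha$. This also proceeds by induction, using that the derived sets decrease and that removal of a half-space with small diameter intersection never removes $x_\varnothing$ at any stage $\gamma\le\alpha$.

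Concretely, at stage $\gamma+1\le\alpha$ with $\gamma\ge\beta$, we have $\gamma=\beta$. Then $x_0,x_1\in d_\delta^\beta(C)$ by induction on the children, and $x_\varnothing\in d_\delta^\beta(C)$ by the strengthened claim at stage $\beta$. For stage $\beta$ itself, we argue as follows: any half-space $S$ containing $x_\varnothing$ with small diameter on $d^{\gamma'}_\delta(C)$ for $\gamma'<\beta$ would contain $x_0$ or $x_1$, both of which lie in $d_\delta^{\gamma'}(C)$. So we are done once we know $x_\varnothing$ survived up to $\gamma'$, which is a nested induction on $\gamma'$.

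\emph{Limit case.} Here $x_\varnothing=x_n$ for all $n$, and $(x_{n^\smallfrown t})_{t\in T_{\beta_n}}$ is a $\delta$-$\beta_n$-tree. So $x_\varnothing\in d_\delta^{\beta_n}(C)$ for every $n$. Since $\{\beta_n\}$ is cofinal in $\alpha$ (it enumerates all ordinals below $\alpha$), the definition at limit stages gives $x_\varnothing\in\bigcap_n d_\delta^{\beta_n}(C)=d_\delta^\alpha(C)$.

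The only delicate point is the bookkeeping in the successor step: ensuring that $x_\varnothing$ itself survives to stage $\beta$ before checking it survives stage $\beta+1$. The cleanest resolution is to prove, by induction on $\alpha$, the statement ``$x_\varnothing\in d_\delta^\alpha(C)$'' and to use the monotonicity $d_\delta^{\gamma}\supset d_\delta^{\gamma'}$ for $\gamma\le\gamma'$, together with a nested induction on $\gamma\le\alpha$ showing $x_\varnothing\in d_\delta^\gamma(C)$ via the children.
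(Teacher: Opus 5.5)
Your proof is correct, but your successor step takes a different route from the paper's.

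The paper works only with $C=B_X$ and gets $x_\varnothing\in d_\delta^\beta(B_X)$ in one line from convexity. The slice derivation maps convex sets to convex sets: removing open half-spaces amounts to intersecting with closed half-spaces, and limit stages are intersections. Hence $x_\varnothing=\frac12(x_0+x_1)\in d_\delta^\beta(B_X)$, and the half-space argument you give then finishes the step.

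You allow an arbitrary set $C$ containing the tree and replace convexity by an inner induction on $\gamma\le\beta+1$. Suppose $x_\varnothing\in d_\delta^\gamma(C)$. Since $x_0,x_1\in d_\delta^\beta(C)\subseteq d_\delta^\gamma(C)$, every open half-space through $x_\varnothing$ contains some $x_i$ with $\|x_\varnothing-x_i\|\ge\delta$. So it meets $d_\delta^\gamma(C)$ in a set of diameter $\ge\delta$, and $x_\varnothing$ survives to stage $\gamma+1$; limit stages are intersections. This is valid and not circular, since the outer hypothesis is only applied to the subtrees.

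What your route buys:
\begin{itemize}
\item A formally stronger, more intrinsic statement. For instance, the root survives $\alpha$ derivations of the countable set of nodes itself, and this implies the $B_X$ case by monotonicity of $d_\delta^\gamma$ in $C$.
\item It isolates the only geometric fact actually used: a half-space containing a midpoint contains one of the endpoints.
\end{itemize}
The paper's route is shorter because it relies on the convexity remark made when the dentability derivation was introduced.

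In a write-up, remove three things:
\begin{itemize}
\item the abandoned rank-based statement $P(\alpha)$;
\item the unjustified ``since $x_\varnothing$ lies in $d_\delta^\beta(C)$'' in your first pass at the successor step;
\item the garbled sentence about $d_\delta^\beta(C)\supset d_\delta^\alpha(C)$.
\end{itemize}
Then state the inner induction once, cleanly, as above.
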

	
	\begin{proof} So assume that $B_X$ contains a \dat. We will show that its  root $x_\varnothing$ belongs to $\d{B_X}{\delta}{\alpha}$. We proceed by induction on $\alpha$. The statement is clear if $\alpha = 0$, so now assume it is true for all $\gamma <\alpha$.
		
		If $\alpha = \beta+1$, let $(x_s)_{s \in \Ta} \subset B_X$ be a \dat. 
		Let $\Tb^0$ (resp. $\Tb^1$) be the set of all 
		$0^\smallfrown t$ 
		(resp. $1^\smallfrown t$) 
		with $t \in \Tb$. 
		It is easy to check that $(x_s)_{s \in \Tb^0}$ and $(x_s)_{s \in \Tb^1}$ are  $\delta$-$\beta$-trees, with respective roots $x_0$ and $x_1$. So, by induction hypothesis, $x_0,x_1 \in \d{B_X}{\delta}{\beta}$. Since $\d{B_X}{\delta}{\beta}$ is convex, $x_\varnothing = \frac12(x_0+x_1) \in \d{B_X}{\delta}{\beta}$. 
		Now, if $S$ is an open slice containing $x_\varnothing$, $S$ contains at least $x_0$ or $x_1$. 
		But $\norm{x_\varnothing - x_0}=\norm{x_\varnothing - x_1} = \frac{1}{2}\norm{x_0-x_1} \geq \delta$, so $\diam(S \cap \d{B_X}{\delta}{\beta}) \geq \delta$ and $x_\varnothing \in \d{B_X}{\delta}{\alpha}$.
		
		If $\alpha$ is a limit ordinal, let $(x_s)_{s \in \Ta} \subset B_X$ be a \dat. 
		For every $n \in \omega$, $x_\varnothing = x_n$, but $x_n$ is the root of the $\delta$-$\beta_n$-tree $(x_{n \smallfrown t})_{t \in \Tbn}$, where $(\beta_n)_n$ is an enumeration of all ordinals less than $\alpha$. So by induction hypothesis, $x_\varnothing \in \d{B_X}{\delta}{\beta_n}$. 
	\end{proof}
	
	\begin{remark}\label{r:FTP1} Note that a Banach space $X$ has the classical Finite Tree Property (FTP) if and only if, there exists $\delta>0$ such that $B_X$ contains a $\delta$-$n$-tree for all $n\in \omega$. In that case, using a translation, we see that $2B_X$ contains a $\delta$-$n$-tree with root $0$ and, after dividing it by 2, $B_X$ contains a $\frac{\delta}{2}$-$n$-tree with root $0$, for all $n\in \omega$. Thus $B_X$ contains a $\frac{\delta}{2}$-$\omega$-tree with root $0$ and $DT(X)>\omega$. On the other hand, the fact that $DT(X)>\omega$ implies FTP is clear. We recall that a Banach space is super-reflexive if and only if it fails the FTP, a result due to R.C. James \cite{James}. So it is equivalent to $DT(X)\le \omega$. By Enflo's theorem \cite{Enflo}, this is also equivalent to the fact that $X$ admits an equivalent uniformly convex norm and  equivalent to $D(X)\le \omega$ (see \cite{Lancien1995}). As we will see in Section \ref{sss:delta-alpha-trees}, $D(X)$ and $DT(X)$ are no longer equal for higher values. In fact the conditions $D(X)<\omega_1$ and $DT(X)<\omega_1$ are not equivalent for separable Banach spaces.
	\end{remark}
	

	\subsection{Sprawling trees}\label{s:sprawlingtrees}
	
	We introduce now the $\alpha$-sprawling tree-property. First we define the notion of $\alpha$-sprawling trees. 
	
	The set $(\set{0,1}\times\omega)^{<\omega}$, of all finite sequences in $\set{0,1}\times\omega$ is equipped with natural extension order, still denoted $\preceq$. For every $\alpha \in [0,\omega_1)$, we build inductively a tree $S_\alpha \subset (\set{0,1}\times\omega)^{<\omega}$ as follows.
	\begin{itemize}
		\item $S_0 =\set{\varnothing}$.
		\item $S_{\alpha+1}=\set{\varnothing} \cup \bigcup_{n\in \omega} \left((0,n)^\smallfrown S_\alpha \cup (1,n)^\smallfrown S_\alpha\right)$ (notice that $s^\smallfrown \varnothing=s$) for every $\alpha$ such that $S_\alpha$ has been constructed.
		\item If $\alpha$ is a limit ordinal, we set $S_\alpha=\set{\varnothing}\cup \bigcup_{n \in \omega} (0,n)^\smallfrown S_{\beta_n}$, where $(\beta_n)_{n=0}^\infty$ is an enumeration of all the ordinals les than $\alpha$. 
	\end{itemize}
	
	The first lemma follows from a straightforward transfinite induction.
	\begin{lemma}
		\label{l: nb of successors in a sprawling tree}
		For every $\alpha \in [0,\omega_1)$ and every $s \in S_\alpha$, there are three possibilities:
		\begin{itemize}
			\item[(A)] $s$ has 0 successors (or $s$ is maximal in $(S_\alpha,\preceq)$). 
			\item[(B)] $s$ has infinitely many successors of type $s^\smallfrown(0,n)$ and infinitely many successors of type $s^\smallfrown(1,n)$ (this is the case in particular when $s=\varnothing$ and $\alpha$ is a successor ordinal)
			\item[(C)] $s$ has infinitely many successors of type of type $s^\smallfrown(0,n)$ and no successors of type $s^\smallfrown(1,n)$ (this is the case in particular when $s=\varnothing$ and $\alpha$ is a limit ordinal). 
		\end{itemize}
	\end{lemma}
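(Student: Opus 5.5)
The plan is a transfinite induction on $\alpha$, proving for every $s \in S_\alpha$ that exactly one of (A), (B), (C) holds. The key device is the description of the successors of $s$ in terms of the first letter of $s$. Write $s = a^\smallfrown s'$ with $a \in \{0,1\}\times\omega$. Then:
\begin{itemize}
\item if $\alpha = \beta+1$, the definition of $S_{\beta+1}$ gives $s' \in S_\beta$, and $s^\smallfrown b \in S_\alpha$ if and only if $s'^\smallfrown b \in S_\beta$;
\item if $\alpha$ is a limit, then $a = (0,n)$ for some $n$, $s' \in S_{\beta_n}$, and $s^\smallfrown b \in S_\alpha$ if and only if $s'^\smallfrown b \in S_{\beta_n}$.
\end{itemize}
These equivalences hold because each set $a^\smallfrown S_\gamma$ is determined by its first coordinate, and different first letters give disjoint pieces. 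In both cases the successor structure of $s$ in $S_\alpha$ is literally the same as that of $s'$ in a tree $S_\gamma$ with $\gamma<\alpha$. The induction hypothesis then says that $s'$, and hence $s$, falls into one of the three cases.

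The induction runs as follows.
\begin{itemize}
\item For $\alpha=0$, the only element is $\varnothing$, which has no successors: case (A).
\item For $s=\varnothing$ and $\alpha=\beta+1$, the successors of $\varnothing$ are exactly the one-letter sequences $(0,n)$ and $(1,n)$ for all $n\in\omega$, since $\varnothing\in S_\beta$. This is case (B).
\item For $s=\varnothing$ and $\alpha$ a limit, the successors of $\varnothing$ are the $(0,n)$, $n\in\omega$, and there are none of the form $(1,n)$. This is case (C).
\item For $s\neq\varnothing$, we reduce to $s'$ by the equivalences above and apply the induction hypothesis in $S_\beta$ or $S_{\beta_n}$.
\end{itemize}

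The cases are mutually exclusive by their very formulation, and since $\omega$ is infinite the word "infinitely many" is justified in (B) and (C). No real obstacle is expected. The only care needed is the bookkeeping that the union defining $S_\alpha$ is disjoint according to the first letter, so that a sequence beginning with $a$ can only come from the piece $a^\smallfrown S_\gamma$. This also shows that no extra successors of type $(1,n)$ can appear below a first letter $(0,n)$ in the limit case.
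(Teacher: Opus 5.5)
Your proof is correct and is the straightforward transfinite induction the paper invokes without giving details. You settle the root cases ($S_0$, $S_{\beta+1}$, limit $S_\alpha$) directly and, for $s\neq\varnothing$, strip the first letter to identify the successor set of $s$ in $S_\alpha$ with that of a node of some $S_\gamma$, $\gamma<\alpha$; this also yields the sharper form used in the definition of sprawling trees, namely that \emph{every} $s^\smallfrown(0,n)$ (and, in case (B), every $s^\smallfrown(1,n)$), $n\in\omega$, lies in $S_\alpha$.
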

	
	We now turn to the definition of a \dast\ in a Banach space. 
	\begin{definition}
		For $\delta > 0$ and $\alpha \in [0, \omega_1)$, we call \textit{\dast}\ of $X$ a family $(x_s)_{s \in S_\alpha} \subset X$ such that, for every $s \in S_\alpha$, the following conditions are satisfied:
		\begin{enumerate}
			\item[$\bullet$] if $s$ satisfies (B), then
			\[x_s = \frac12(x_{s^\smallfrown (0,n)} + x_{s^\smallfrown (1,n)}),\  n\in \omega\ \text{and}\ \|x_{s^\smallfrown (0,n)} - x_{s^\smallfrown (0,m)}\| \geq \delta,\ n\neq m;
			\]
			\item[$\bullet$] if $s$ satisfies (C), then $x_s = x_{s^\smallfrown (0,n)}$ for all  $n \in \omega$. 
		\end{enumerate}
		A $\delta$-1-sprawling tree will be called a \emph{$\delta$-spider}.
	\end{definition}
	
	Notice that if $s$ satisfies (B),  then 
	$x_{s^\smallfrown (0,n)} - x_{s^\smallfrown (0,m)}
	=x_{s^\smallfrown (1,
		m)} - x_{s^\smallfrown (1,
		n)}$ 
	and  \[\Big\|x_s-\frac12(x_{s^\smallfrown(0,n)}+x_{s^\smallfrown(1,m)})\Big\|=\frac12\|x_{s^\smallfrown(1,n)}-x_{s^\smallfrown(1,m)}\|\geq \frac\delta2,
	\]
	for all $n\neq m$.
	
	\medskip Similarly as for the dyadic trees, we may now define a related family of isomorphic properties.
	
	\begin{definition}\label{d:ST-index}
		Let $X$ be a separable Banach space and $\alpha \in [0,\omega_1)$. We say that $X$ has the \emph{$\alpha$-sprawling tree property} if there exists $\delta>0$ such that $B_X$ contains a {\dast}. Then we define the \emph{sprawling-tree index} of $X$ as follows. For $\delta>0$, $ST(X,\delta)=\inf\{\alpha<\omega_1,\ B_X\ \text{does not contain a {\dast}}\}$ if it exists and $ST(X,\delta)=\infty$ otherwise, and $ST(X)=\sup\{ST(X,\delta),\ \delta>0\}$. 
	\end{definition}

	The following lemma is the essence of the arguments in \cite{Basset2025}. The idea can be traced back at least to the paper~\cite{LLT} by Lin, Lin and Troyanski. 
	\begin{lemma}\label{l:LLT}
		Let $X$ be a Banach space, $x \in X$ and $(x_i)_{i=0}^\infty$, $(y_i)_{i=0}^\infty$ be  bounded sequences in $X$ such that $x=\frac12(x_i+y_i)$ for all $i\in \omega$. Then, for every weak neighborhood $V$ of $x$ there exists an infinite subset $I$ of $\omega$ such that $\frac12(x_i+y_j) \in V$ for all $i,j\in I$. In particular $x$ belongs to the weak closure of  $\{\frac12(x_i+y_j):\ i\neq j \in \omega\}$.
	\end{lemma}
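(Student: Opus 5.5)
The plan is a Ramsey/diagonal-type argument that combines boundedness of the sequences with the structure of weak neighborhoods. Since $x$ is fixed, we reduce to a basic weak neighborhood
\[
V=\{z\in X:\ |x_k^*(z-x)|<\eps,\ k=1,\dots,m\},
\]
with $x_1^*,\dots,x_m^*\in X^*$ and $\eps>0$. Everything then becomes a statement about finitely many bounded scalar sequences.

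For each $k$, the sequences $(x_k^*(x_i))_i$ and $(x_k^*(y_i))_i$ are bounded real sequences. By a finite number of successive extractions (Bolzano--Weierstrass in $\R^{2m}$), we find an infinite set $I_0\subset\omega$ such that $x_k^*(x_i)\to a_k$ and $x_k^*(y_i)\to b_k$ as $i\to\infty$, $i\in I_0$, for every $k\le m$. The identity $x=\frac12(x_i+y_i)$ gives
\[
x_k^*(x)=\tfrac12\bigl(x_k^*(x_i)+x_k^*(y_i)\bigr)\quad\text{for all } i,
\]
and passing to the limit yields $x_k^*(x)=\frac12(a_k+b_k)$. Next we choose $N$ such that for all $i\in I_0$ with $i\ge N$ and all $k\le m$ we have $|x_k^*(x_i)-a_k|<\eps$ and $|x_k^*(y_i)-b_k|<\eps$. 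Let $I=\{i\in I_0:\ i\ge N\}$. Then for any $i,j\in I$,
\[
\Bigl|x_k^*\bigl(\tfrac12(x_i+y_j)\bigr)-x_k^*(x)\Bigr|\le \tfrac12|x_k^*(x_i)-a_k|+\tfrac12|x_k^*(y_j)-b_k|<\eps,
\]
so $\frac12(x_i+y_j)\in V$.

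For the "in particular" part, take any weak neighborhood $V$ of $x$ and the infinite set $I$ just produced. Since $I$ is infinite, it contains two distinct indices $i\neq j$, and then $\frac12(x_i+y_j)\in V\cap\{\frac12(x_i+y_j):\ i\neq j\}$. So every weak neighborhood of $x$ meets this set, and $x$ lies in its weak closure.

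There is no real obstacle here. The one point requiring care is to use convergence of the coordinates of $x_i$ and $y_i$ separately, rather than of their average, since the average is constant and carries no information. Boundedness is exactly what makes this separate extraction possible.
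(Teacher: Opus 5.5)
Your proof is correct and follows essentially the same route as the paper: reduce to a basic weak neighborhood given by finitely many functionals, extract a subsequence along which $x_k^*(x_i)\to a_k$ and $x_k^*(y_i)\to b_k$, deduce $x_k^*(x)=\frac12(a_k+b_k)$, and conclude for all large $i,j$. You also spell out the final $\eps$-estimate and the ``in particular'' part, which the paper leaves implicit.
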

	
	\begin{proof}
		Let $V$ be a weak neighborhood of $x$. We may assume that $V$ is given by conditions $x_1^*<\alpha_1,\ldots, x_n^*<\alpha_n$, with $x_1^*,\ldots,x_n^* \in X^*$ and $\alpha_1,\ldots,\alpha_n \in \R$.  Since $(x_i)_i$ and $(y_i)_i$ are bounded, by passing to a subsequence, we may assume that
		$\lim_{i \to \infty}\langle x^*_k, x_i\rangle = c_k$ and $\lim_{i \to \infty}\langle x^*_k, y_i\rangle = d_k$. We have that $\frac12(c_k+d_k)=\langle x^*_k,x\rangle$. It follows that for $i, j$ sufficiently large $\langle x^*_k,\frac12(x_i+y_j) \rangle < \alpha_k$.
	\end{proof}
	
	With this lemma in hands, we can relate the $\alpha$-sprawling tree property to the index $C(X)$. 
	
	\begin{proposition}\label{p:C(X)_ST(X)}
		Let $X$ be a Banach space. Then $ST(X)\le C(X)$. More precisely, if $ST(X,\delta)>\alpha$, for $\delta>0$ and $\alpha<\omega_1$, then $C(X,\delta)>\alpha$.
	\end{proposition}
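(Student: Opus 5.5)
We mimic the proof of Proposition \ref{prop: link between dentability index and d-a-t}: we show by transfinite induction on $\alpha$ that the root $x_\varnothing$ of any \dast\ $(x_s)_{s\in S_\alpha}$ contained in $B_X$ belongs to $\gamma_\delta^\alpha(B_X)$. Since $\gamma_\delta^\alpha(B_X)\neq\emptyset$ means $C(X,\delta)>\alpha$, this gives the statement. In fact we prove the stronger assertion that $x_\varnothing\in\gamma_\delta^\alpha(B_X)$ for any such tree contained in $B_X$, so that it applies to all subtrees. The case $\alpha=0$ is trivial because $\gamma_\delta^0(B_X)=B_X$.

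\emph{Limit step.} Let $\alpha$ be a limit ordinal. For each $n$, the family $(x_{(0,n)^\smallfrown t})_{t\in S_{\beta_n}}$ is a $\delta$-$\beta_n$-sprawling tree in $B_X$ with root $x_{(0,n)}=x_\varnothing$. By the induction hypothesis, $x_\varnothing\in\gamma_\delta^{\beta_n}(B_X)$ for every $n$. Since $(\beta_n)$ enumerates all ordinals below $\alpha$ and $\gamma_\delta^\alpha(B_X)=\bigcap_{\beta<\alpha}\gamma_\delta^\beta(B_X)$, we get $x_\varnothing\in\gamma_\delta^\alpha(B_X)$.

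\emph{Successor step.} Let $\alpha=\beta+1$ and put $C=\gamma_\delta^\beta(B_X)$, which is closed and convex. For each $n$ and $i\in\{0,1\}$, the family $(x_{(i,n)^\smallfrown t})_{t\in S_\beta}$ is a $\delta$-$\beta$-sprawling tree in $B_X$. By induction, $x_{(0,n)},x_{(1,n)}\in C$, and by convexity $z_{n,m}:=\frac12(x_{(0,n)}+x_{(1,m)})\in C$ for all $n,m$. We show that $z_{n,m}\in\sigma_\delta^1(C)$ for every $n\neq m$. Let $V$ be a weak neighbourhood of $z_{n,m}$. The sequence $(x_{(1,k)})_{k}$ is bounded and satisfies $\|x_{(1,k)}-x_{(1,l)}\|\ge\delta$ for $k\ne l$, since $x_{(1,k)}=2x_\varnothing-x_{(0,k)}$. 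Hence it cannot converge weakly to a single point along all subsequences while staying within distance $<\delta/2$ of it. We use instead Lemma \ref{l:LLT} with $x=x_\varnothing$, $x_i=x_{(0,i)}$, $y_i=x_{(1,i)}$, which keeps everything in $C$. The lemma shows that $x_\varnothing$ is in the weak closure of $\{z_{i,j}: i\neq j\}$. More precisely, every weak neighbourhood $W$ of $x_\varnothing$ contains $z_{i,j}$, $z_{j,i}$ and $z_{i,i}=x_\varnothing$ for some $i\neq j$ in the infinite set $I$. By the remark after the definition of sprawling trees, $\|x_\varnothing-z_{i,j}\|=\frac12\|x_{(1,i)}-x_{(1,j)}\|\ge\delta/2$. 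Moreover $z_{i,j}-z_{j,i}=x_{(0,i)}-x_{(0,j)}$, whose norm is at least $\delta$. Therefore $\diam(W\cap C)\ge\delta$ for every weak neighbourhood $W$ of $x_\varnothing$. So the cleanest choice is to avoid the points $z_{n,m}$ altogether: this computation directly gives $x_\varnothing\in\sigma_\delta^1(C)\subset\gamma_\delta^1(C)=\gamma_\delta^{\alpha}(B_X)$.

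\emph{Expected obstacle.} The only delicate point is to make sure the diameter reaches $\delta$ rather than just $\delta/2$, given the convention $\diam(A\cap C)<\varepsilon$ in the derivation. This is exactly why we use the pair $z_{i,j},z_{j,i}$, at distance $\|x_{(0,i)}-x_{(0,j)}\|\ge\delta$, instead of the pair $x_\varnothing,z_{i,j}$. Both points lie in $W\cap C$ once $i,j\in I$, by the first part of Lemma \ref{l:LLT}. With this choice the successor step closes and the induction is complete.
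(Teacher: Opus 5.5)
Your proof is correct and is essentially the paper's argument: transfinite induction on $\alpha$, where in the successor step Lemma~\ref{l:LLT} puts both $\frac12(x_{(0,i)}+x_{(1,j)})$ and $\frac12(x_{(0,j)}+x_{(1,i)})$ into a given weak neighbourhood of $x_\varnothing$. Their distance is $\|x_{(0,i)}-x_{(0,j)}\|\ge\delta$, and convexity of $\gamma_\delta^\beta(B_X)$ keeps these points and $x_\varnothing$ in that set. In a final write-up, delete the abandoned false start, namely the unproved claim that $z_{n,m}\in\sigma_\delta^1(C)$ and the garbled remark about weak convergence of $(x_{(1,k)})_k$; neither plays any role.
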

	
	\begin{proof} So assume that $B_X$ contains a $\delta$-$\alpha$-sprawling tree $(x_s)_{s\in S_\alpha}$. We will show by induction that $x_\varnothing \in \gamma_{\delta}^\alpha(B_X)$. 
		If $\alpha=0$, there's nothing to prove. Assume now that we have proved the claim for every $\gamma<\alpha$. We recall that the labels (A), (B), (C) in Lemma \ref{l: nb of successors in a sprawling tree} refer to the three possible structures of the set of successors of an element of the tree $S_\alpha$. 
		
		If $\alpha=\beta+1$ then $\varnothing$ satisfies $(B)$. For all $i\in \omega$, $x_{(0,i)}$ and $x_{(1,i)}$ are roots of a $\delta$-$\beta$-sprawling tree in $B_X$. So, by induction hypothesis, they belong to $\gamma_{\delta}^\beta(B_X)$. Let now $V$ be a weak neighborhood of $x_\varnothing$. By Lemma~\ref{l:LLT} there are $i\neq j$ such that $$\frac{x_{(0,i)}+x_{(1,j)}}2\in V \mbox{ and } \frac{x_{(0,j)}+x_{(1,i)}}2\in V.$$ 
		We have 
		\[
		\Big\|\frac{x_{(0,i)}+x_{(1,j)}}2-\frac{x_{(0,j)}+x_{(1,i)}}2\Big\|=\|x_{(0,i)}-x_{(0,j)}\|\geq \delta.
		\]
		It follows from the convexity of $\gamma_\delta^\beta(B_X)$ that $\diam(V \cap \gamma_{\delta}^\beta(B_X))\geq \delta$.
		
		Finally, when $\alpha$ is a limit ordinal, the claim follows by induction hypothesis as, for every $n\in \omega$, we have $x_\varnothing=x_{(0,n)}$ and $(x_{(0,n)^\smallfrown s})_{s\in T_{\beta_n}} \subset B_X$ is a $\delta$-$\beta_n$-sprawling tree in $B_X$. So $x\in \bigcap_{n\in \omega}\gamma_{\delta}^{\beta_n}(B_X)=\gamma_{\delta}^\alpha(B_X)$. 
	\end{proof}
	
	As we will see in the next section the reverse inequality does not hold and the conditions $ST(X)<\omega_1$ and $C(X)<\omega_1$ are not equivalent for separable Banach spaces. 
	
	\subsection{A more global approach and applications} In this section, we revisit the indices $DT(X)$ and $ST(X)$ with the tools from descriptive set theory. For that purpose, we take inspiration from the fundamental paper by J. Bourgain \cite{Bourgain}, where ordinal indices  measuring the linear embeddability of a given Banach space with a basis were introduced. 
	
	We start with a very general setting, that we will soon specialize to our two indices. So, let us fix $A=(A_n)_{n=0}^\infty$  a strictly increasing sequence of countable sets, with $A_0 \neq \emptyset$. We denote $A_\infty:=\bigcup_{n=0}^\infty A_n$. Let $X$ be a separable Banach space.
	For any map $f:B\to C$, we will use the notation $\dom(f):=B$. In what follows we will always consider the topology of pointwise convergence on sets of functions $X^B$.
	We say that $T$ is an \emph{$A$-tree on $X$} if
	\begin{itemize}
		\item $T\subset \bigcup_{n=0}^\infty X^{A_n}$
		\item for every $f\in T$, $\dom(f)=A_n \implies f\restricted_{A_{n-1}}\in T$.
	\end{itemize}
	
	
	We say that an $A$-tree $T$ on $X$ is \emph{closed} if, for every $n\in \omega$, the set $T\cap X^{A_n}$ is closed in $X^{A_n}$ equipped with the topology of pointwise convergence.
	
	An $A$-tree $T$ on $X$ is \emph{ill founded} if there exists $f\in X^{A_\infty}$ such that $f\restricted_{A_n} \in T$ for every $n \in \omega$.
	Otherwise, we say that $T$ is \emph{well founded}. 
	
	Let $T$ be an $A$-tree on $X$. An element $f\in T$ is \emph{maximal in $T$} if $T$ contains no proper extension of $f$.
	For $\alpha$ ordinal, we define inductively the $A$-tree $T^0=T$, 
	\begin{align*}
		T^{\alpha+1}&=\set{\mbox{non-maximal elements of }T^\alpha}\\
		&=\bigcup_{n=0}^\infty \set{f\restricted_{A_n}:f\in T^{\alpha} \mbox{ and } \dom(f)=A_{n+1}},
	\end{align*}
	and $T^\alpha=\bigcap_{\beta<\alpha}T^\beta$, if $\alpha$ is a limit ordinal. 
	Then the \emph{height} of $T$ is $h(T)=\inf\{\alpha,\ T^\alpha=\emptyset\}$ if it exists and $h(T)=\infty$ otherwise.
	
	For every $n \in \omega$ we denote $\pi_n:X^{A_{n+1}}\to X^{A_n}$ the map defined by $\pi_n(f)=f\restricted_{A_n}$, for $f\in X^{A_{n+1}}$. Our first statement is an adaptation of Lemma 2 in \cite{Bourgain}.
	
	\begin{lemma}\label{l:illfounded} 
		Let $X$ be a Banach space and $T$ be a closed non empty $A$-tree on $X$ such that $T\cap X^{A_n}=\overline{\pi_n(T\cap X^{A_{n+1}})}$ for all $n\in \omega$. Then $T$ is ill founded.
	\end{lemma}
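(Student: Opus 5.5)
We build an infinite branch by a diagonal approximation argument in the spirit of Bourgain's Lemma~2. The key facts are that $X$ is a complete metric space and each $A_n$ is countable. Hence $X^{A_n}$ with the pointwise topology is a countable product of complete metric spaces, and so is completely metrizable. We fix a compatible complete metric on it, for instance
\[
\rho_n(f,g)=\sum_k 2^{-k}\min(1,\norm{f(a_k)-g(a_k)}),
\]
where $(a_k)_k$ is an enumeration of $A_\infty$ such that $A_n$ is enumerated first. We then construct a sequence $f_n\in T\cap X^{A_n}$ whose restrictions stabilize in the limit.

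The construction is inductive. Since $T$ is a nonempty tree, $T\cap X^{A_0}\neq\emptyset$, and we pick $f_0$ there. Given $f_n\in T\cap X^{A_n}$, the hypothesis $T\cap X^{A_n}=\overline{\pi_n(T\cap X^{A_{n+1}})}$ lets us choose $f_{n+1}\in T\cap X^{A_{n+1}}$ with $\norm{f_{n+1}(a)-f_n(a)}<2^{-n}$ for every $a$ in a prescribed finite set $F_n\subset A_n$. Here $(F_n)$ is an increasing sequence of finite sets with $F_n\subset A_n$ and $\bigcup F_n=A_\infty$. Such sets exist because the $A_n$ are increasing and countable: we take $F_n$ to consist of the first $n$ elements of $A_\infty$ in the enumeration that happen to lie in $A_n$.

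Next we pass to the limit. For any $a\in A_\infty$, we have $a\in F_m$ for all $m$ large, so $(f_m(a))_{m\ge m_0}$ is Cauchy, since $\sum 2^{-m}<\infty$. By completeness of $X$ it converges to some $f(a)$, which defines $f\in X^{A_\infty}$. Fix $n$. For $m\ge n$, the restrictions $f_m\restricted_{A_n}$ lie in $T\cap X^{A_n}$ because $T$ is a tree: iterating the second axiom gives $f_m\restricted_{A_k}\in T$ for all $k\le m$. These restrictions converge pointwise on $A_n$ to $f\restricted_{A_n}$. Since $T\cap X^{A_n}$ is closed in the pointwise topology, $f\restricted_{A_n}\in T$ for every $n$. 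Hence $T$ is ill founded.

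The main point to handle carefully is the bookkeeping that makes the approximations summable at each fixed coordinate. The density hypothesis only gives control on finitely many coordinates at each step, so the finite sets $F_n$ must be chosen to exhaust $A_\infty$ while keeping each coordinate controlled from some stage on. The sets $F_n$ described above achieve this. Apart from that, the argument uses only completeness of $X$ and closedness of each level of $T$.
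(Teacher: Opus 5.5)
Your proof is correct and follows the paper's argument. You choose $f_{n+1}\in T\cap X^{A_{n+1}}$ within $2^{-n}$ of $f_n$ on a finite set of coordinates that eventually contains every point of $A_\infty$, take coordinatewise limits using completeness of $X$, and conclude from closedness of each level $T\cap X^{A_n}$ together with the fact that the restrictions of the $f_m$, $m\ge n$, to $A_n$ lie in $T$. Your sets $F_n=\{a_0,\dots,a_{n-1}\}\cap A_n$ simply make precise the diagonal bookkeeping over the coordinates $a_i^j$ that the paper sketches. The opening remark on complete metrizability of $X^{A_n}$ and the metric $\rho_n$ is never used and can be dropped.
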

	
	\begin{proof} 
		Since $T$ is a non empty $A$-tree, there exists $f_0\in T\cap X^{A_0}$. 
		Let us denote $A_0=\set{a_0^0,a_0^1,\ldots}$.
		Using that $T\cap X^{A_0}=\overline{\pi_0(T\cap X^{A_{1}})}$ 
		we can find $f_1 \in T\cap X^{A_1}$ such that $\norm{f_0(a_0^0)-f_1(a_0^0)}\leq 2^{-0}$.
		Next $A_1=A_0\cup \set{a_1^0,a_1^1,\ldots}$, where this union is disjoint.
		Using that $T\cap X^{A_1}=\overline{\pi_0(T\cap X^{A_{2}})}$ 
		we can find $f_2 \in T\cap X^{A_2}$ such that $\norm{f_1(a_i^j)-f_2(a_i^j)}\leq 2^{-1}$
		for every $j\leq i\leq 1$.
		Following in the obvious fashion, we obtain a sequence $(f_n)$ such that $f_n\in T\cap X^{A_n}$ and such that for every $a \in A_\infty$, the sequence $(f_n(a))_n$ is eventually defined and Cauchy in $X$.
		Let us define $f(a)$ as the limit of this sequence yielding $f\in X^{A_\infty}$.
		Now, for every $n\in \omega$, $f\restricted_{A_n}\in T$ as $T\cap X^{A_n}$ is closed.
		This proves that $T$ is ill founded.   
	\end{proof}
	
	Next we adapt Proposition 3 from \cite{Bourgain}.
	
	\begin{lemma}\label{l:WFomega_1} 
		Assume that $X$ is separable and that $T$ is a well founded closed $A$-tree on $X$. 
		Then $h(T)<\omega_1$. 
	\end{lemma}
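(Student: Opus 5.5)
We adapt Bourgain's argument: the plan is to show that if $h(T)\ge\omega_1$ then $T$ is ill founded, using Lemma \ref{l:illfounded}. Suppose $T$ is a closed $A$-tree on $X$ with $h(T)\ge\omega_1$, or at least $T^\alpha\neq\emptyset$ for every $\alpha<\omega_1$. Rather than working with the derived trees $T^\alpha$ directly, which need not be closed, we introduce a closed variant of the derivation. Define $\tilde T^0=T$ and
\[
\tilde T^{\alpha+1}=\bigcup_{n=0}^\infty \overline{\pi_n\big(\tilde T^\alpha\cap X^{A_{n+1}}\big)},
\]
where the closure is taken in $X^{A_n}$, and take intersections at limit ordinals. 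We check by induction that each $\tilde T^\alpha$ is a closed $A$-tree. It is contained in $T$, since $T$ is closed and the restriction $\pi_n$ maps $T\cap X^{A_{n+1}}$ into $T\cap X^{A_n}$. It is also decreasing in $\alpha$: the inclusion $\tilde T^{1}\subset \tilde T^0$ holds by closedness of $T$, and the general case follows by monotonicity of the operation. Moreover $T^\alpha\subset\tilde T^\alpha$ for every $\alpha$, because removing maximal elements gives a subset of the projections, and projections are contained in their closures. Hence $\tilde T^\alpha\neq\emptyset$ for all $\alpha<\omega_1$.

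The key step is the stabilization argument, which uses separability. Each space $X^{A_n}$ is a countable product of separable metric spaces, and is therefore second countable and hereditarily Lindelöf. A decreasing transfinite family of closed subsets of such a space must stabilize at some countable ordinal. Indeed, if it did not stabilize, the strictly decreasing steps would produce $\omega_1$ distinct basic open sets, each meeting one level of the family but not the next, which is impossible with a countable base. Doing this for each $n$ separately gives ordinals $\alpha_n<\omega_1$, and $\alpha_0=\sup_n\alpha_n<\omega_1$ works for every level simultaneously. So $\tilde T^{\alpha_0+1}=\tilde T^{\alpha_0}$.

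Set $S=\tilde T^{\alpha_0}$. It is nonempty, closed, and for each $n$ satisfies $S\cap X^{A_n}=\overline{\pi_n(S\cap X^{A_{n+1}})}$. By Lemma \ref{l:illfounded}, $S$ is ill founded, hence so is $T\supset S$, contradicting the assumption that $T$ is well founded. Therefore there is some $\alpha<\omega_1$ with $T^\alpha=\emptyset$, that is, $h(T)<\omega_1$.

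The main obstacle is that the genuine derived trees $T^\alpha$ are not closed, so Lemma \ref{l:illfounded} cannot be applied to them directly. This is why we pass to the closed derivation $\tilde T^\alpha$ and need the domination $T^\alpha\subset\tilde T^\alpha$, which must be checked carefully at limit stages as well as at successor stages. The countable-base stabilization is routine but is the place where the separability of $X$ is used.
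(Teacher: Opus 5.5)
Your proof is correct and is essentially the paper's argument. Both use the countable base of each $X^{A_n}$ to get a countable stabilization ordinal, and then apply Lemma~\ref{l:illfounded} to a closed subtree of $T$. The only difference is where the closure is taken: you close up at every stage (your derivation $\tilde T^\alpha \supseteq T^\alpha$), while the paper stabilizes the closures $\overline{T^\alpha\cap X^{A_n}}$ of the ordinary derived trees and applies the lemma once to $S=\bigcup_n \overline{T^{\alpha_0}\cap X^{A_n}}$, using continuity of $\pi_n$ to get $S\cap X^{A_n}=\overline{\pi_n(S\cap X^{A_{n+1}})}$.
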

	
	\begin{proof} 
		Note that for any ordinal $\alpha$, $T^\alpha$ is an $A$-tree on $X$ and, by definition, $T^{\alpha+1}\cap X^{A_n}=\pi_n(T^\alpha\cap X^{A_{n+1}})$. 
		Since the family $(T^\alpha)_\alpha$ is decreasing and, for all $n\in \omega$, $X^{A_n}$ has a countable basis of open sets, there exists $\alpha_0<\omega_1$ such that $\overline{T^{\alpha}\cap X^{A_n}}=\overline{T^{\alpha+1}\cap X^{A_n}}$, for all $n\in \omega$ and all $\alpha\ge \alpha_0$. Consider now  $S=\bigcup_{n=0}^\infty \overline{T^{\alpha_0}\cap X^{A_n}}$. 
		Clearly $S$ is a closed $A$-tree on $X$. 
		Since $T$ is closed, we have that $S \subset T$. 
		Therefore $S$ is well founded. 
		Finally,  for all $n\in \omega$, we have
		\[
		\begin{aligned}
			S\cap X^{A_n}
			&=\overline{T^{\alpha_0}\cap X^{A_n}}
			=\overline{T^{\alpha_0+1}\cap X^{A_n}}
			=\overline{\pi_n(T^{\alpha_0}\cap X^{A_{n+1}})}\\
			&=\overline{\pi_n(\overline{T^{\alpha_0}\cap X^{A_{n+1}}})}
			=\overline{\pi_n(S\cap X^{A_{n+1}})}.
		\end{aligned}
		\]
		So, by Lemma~\ref{l:illfounded}, $S=\emptyset$. 
		It follows that $T^{\alpha_0}=\emptyset$. This finishes the proof. 
	\end{proof}
	
	In what follows we are going to consider two concrete realizations of the above abstract scheme. One linked to $\delta$-$\alpha$-trees the other to $\delta$-$\alpha$-sprawling trees.
	
	\subsubsection{$\delta$-$\alpha$-trees.}\label{sss:delta-alpha-trees} 
	We let  $A=\bigcup_{n=0}^\infty A_n$, with  $A_n=\set{0,1}^{\leq n}$. Notice that for $n\in \omega$, $A_n=T_n$, where the trees $(T_\alpha)_{\alpha <\omega_1}$ have been defined in Section \ref{s: delta-alpha-trees}. Let $X$ be a separable Banach space and $\delta>0$. We denote $\mathcal{DT}(X,\delta)$ the set of all $f \in \bigcup_{n=0}^\infty (B_X)^{A_n}$ satisfying: for all $n\in \N$, all $f\in \mathcal{DT}(X,\delta)\cap (B_X)^{A_n}$ and all $s\in A_{n-1}$, we have 
	\[f(s)=\frac12(f(s^\smallfrown 0)+f(s^\smallfrown 1))\ \text{and}\ \frac12\|f(s^\smallfrown 0)-f(s^\smallfrown 1)\|\ge \delta.
	\]
	
	The following lemma is obvious.
	
	\begin{lemma}\label{l:DT closed}
		$\mathcal{DT}(X,\delta)$ is a closed $A$-tree on $X$.
	\end{lemma}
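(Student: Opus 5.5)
We need to check two things: that $\mathcal{DT}(X,\delta)$ is an $A$-tree on $X$, and that each level $\mathcal{DT}(X,\delta)\cap X^{A_n}$ is closed in $X^{A_n}$ for the topology of pointwise convergence. Throughout, the defining conditions are read as: $f\in(B_X)^{A_n}$ belongs to $\mathcal{DT}(X,\delta)$ if and only if, for every $s\in A_{n-1}$,
\[
f(s)=\tfrac12\bigl(f(s^\smallfrown 0)+f(s^\smallfrown 1)\bigr)\quad\text{and}\quad \tfrac12\norm{f(s^\smallfrown 0)-f(s^\smallfrown 1)}\ge\delta .
\]
When $n=0$ there is no condition, so every $f\in(B_X)^{A_0}$ belongs to $\mathcal{DT}(X,\delta)$.

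\emph{Tree property.} Let $f\in\mathcal{DT}(X,\delta)$ with $\dom(f)=A_n$ and $n\ge1$, and set $g=f\restricted_{A_{n-1}}$. Then $g$ takes values in $B_X$. For any $s\in A_{n-2}$, both $s^\smallfrown0$ and $s^\smallfrown1$ lie in $A_{n-1}$, so $g$ agrees with $f$ at $s$, $s^\smallfrown0$ and $s^\smallfrown1$. Since $A_{n-2}\subset A_{n-1}$, the two defining conditions for $g$ at $s$ are among those already satisfied by $f$. Hence $g\in\mathcal{DT}(X,\delta)$. Note also that $\mathcal{DT}(X,\delta)\subset\bigcup_n X^{A_n}$ by definition.

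\emph{Closedness.} Fix $n$. Since $A_n$ is finite, the topology of pointwise convergence on $X^{A_n}$ is the product norm topology, and for each $s\in A_n$ the evaluation map $f\mapsto f(s)$ is continuous. The level $\mathcal{DT}(X,\delta)\cap X^{A_n}$ is the intersection of the following sets:
\begin{itemize}
\item the sets $\{f:\norm{f(s)}\le1\}$ for $s\in A_n$, which are closed because $B_X$ is closed and evaluation is continuous;
\item the sets $\{f: f(s)-\tfrac12(f(s^\smallfrown0)+f(s^\smallfrown1))=0\}$ for $s\in A_{n-1}$, which are preimages of $\{0\}$ under continuous maps;
\item the sets $\{f:\tfrac12\norm{f(s^\smallfrown0)-f(s^\smallfrown1)}\ge\delta\}$ for $s\in A_{n-1}$, which are preimages of the closed set $[\delta,\infty)$ under continuous maps.
\end{itemize}
An intersection of closed sets is closed, so each level is closed. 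The only points needing care are that the inequality $\ge\delta$ is non-strict (so its solution set is closed) and that $B_X$ is the closed unit ball. There is no real obstacle.
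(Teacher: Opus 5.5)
Your proof is correct. It is the direct verification the paper has in mind: the paper calls the lemma obvious and gives no proof. One small remark: you don't need $A_n$ to be finite, because evaluation maps are continuous for the topology of pointwise convergence on any $X^{B}$, so the same argument proves Lemma~\ref{l:ST-closed} verbatim for the infinite countable sets $A_k=(\set{0,1}\times\omega)^{\le k}$.
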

	
	We now recall the following classical definition.
	\begin{definition} We say that a Banach space $X$ has the \emph{Infinite Tree Property} if  there exists $\delta>0$ and a map $f:A_\infty \to B_X$ such that for every $n\in \omega$, $f\restricted_{A_n} \in \mathcal{DT}(X,\delta)$.
	\end{definition}
	So, the following is a tautology. 
	
	\begin{lemma}\label{l:ITP_illfounded} 
		Let $X$ be a Banach space. 
		Then $X$ has the Infinite Tree Property if and only if there exists $\delta >0$ such that $\mathcal{DT}(X,\delta)$ is ill founded. 
	\end{lemma}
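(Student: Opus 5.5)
This is a matter of unwinding the definitions; the paper itself calls it a tautology, so the plan is simply to check that the two conditions say the same thing.

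First I fix the meaning of ill foundedness for an $A$-tree on $X$. Here $A_\infty=\bigcup_n A_n=\{0,1\}^{<\omega}$. By definition, $\mathcal{DT}(X,\delta)$ is ill founded if there exists $f\in X^{A_\infty}$ with $f\restricted_{A_n}\in \mathcal{DT}(X,\delta)$ for every $n\in\omega$.

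Suppose $X$ has the Infinite Tree Property, with $\delta>0$ and $f:A_\infty\to B_X$ as in the definition. Then $f\in X^{A_\infty}$ and every restriction $f\restricted_{A_n}$ lies in $\mathcal{DT}(X,\delta)$. This is exactly the statement that $\mathcal{DT}(X,\delta)$ is ill founded.

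Conversely, suppose that for some $\delta>0$ there is $f\in X^{A_\infty}$ with $f\restricted_{A_n}\in\mathcal{DT}(X,\delta)$ for all $n$. The only extra point to check is that $f$ takes values in $B_X$. This holds because $\mathcal{DT}(X,\delta)\subset\bigcup_n (B_X)^{A_n}$, and every $s\in A_\infty$ lies in some $A_n$, so $f(s)=f\restricted_{A_n}(s)\in B_X$. Hence $f:A_\infty\to B_X$ witnesses the Infinite Tree Property with the same $\delta$.

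No step is a genuine obstacle. The one place to be careful is the range $B_X$ versus $X$ in the converse, which is handled by the observation above.
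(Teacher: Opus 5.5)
Your proposal is correct and matches the paper, which gives no argument beyond calling the lemma a tautology; unwinding the two definitions is exactly what is intended. Your check that the witness $f$ takes values in $B_X$, because $\mathcal{DT}(X,\delta)\subset\bigcup_n (B_X)^{A_n}$, is the only detail worth making explicit.
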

	
	Recall that $DT(X,\delta)$ stands for the dyadic-tree index introduced in Section \ref{s: delta-alpha-trees}. 
	
	\begin{proposition}\label{p:DT=DT}
		Let $X$ be a separable Banach space and $\delta>0$. Then $$DT(X,\delta)=h(\mathcal{DT}(X,\delta)).$$ 
	\end{proposition}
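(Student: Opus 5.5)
The plan is to prove, by transfinite induction on $\alpha<\omega_1$, a pointwise description of the derived trees of $T:=\mathcal{DT}(X,\delta)$, and then read off the equality of the two indices.

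\emph{Claim.} For every $\alpha<\omega_1$, every $n\in\omega$ and every $f\in T\cap (B_X)^{A_n}$, we have $f\in T^\alpha$ if and only if, for every $s\in\{0,1\}^n$ (the leaves of $A_n$), $f(s)$ is the root of a {\dat} contained in $B_X$.

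Since $A_0=\{\varnothing\}$ and each $T^\alpha$ is closed under restriction, $T^\alpha\neq\emptyset$ if and only if $T^\alpha\cap X^{A_0}\neq\emptyset$. By the Claim with $n=0$, this holds if and only if $B_X$ contains a {\dat}. Hence, for $\alpha<\omega_1$, we get $T^\alpha=\emptyset$ exactly when $B_X$ contains no {\dat}. This gives $DT(X,\delta)=h(T)$ whenever either quantity is a countable ordinal.

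It remains to treat the case where $B_X$ contains a {\dat} for every $\alpha<\omega_1$, so that $DT(X,\delta)=\infty$. Then $h(T)\ge\omega_1$. By Lemma~\ref{l:DT closed}, $T$ is closed, so Lemma~\ref{l:WFomega_1} forces $T$ to be ill founded. Fix a branch $g\in X^{A_\infty}$ with $g\restricted_{A_n}\in T$ for all $n$. By induction on $\alpha$, every restriction $g\restricted_{A_n}$ stays in $T^\alpha$: it is never maximal, because $g\restricted_{A_{n+1}}$ extends it. Hence $T^\alpha\neq\emptyset$ for all ordinals $\alpha$, and $h(T)=\infty$. Conversely, if $h(T)=\infty$, then $T^\alpha\ne\emptyset$ for all $\alpha<\omega_1$ and $DT(X,\delta)=\infty$ by the Claim.

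Proof of the Claim. The case $\alpha=0$ is trivial, since a $\delta$-$0$-tree is a single point of $B_X$.

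Successor step $\alpha=\beta+1$. Suppose $f\in T^{\beta+1}$. Then $f$ has an extension $g\in T^\beta$ with $\dom(g)=A_{n+1}$. By the induction hypothesis, each $g(s^\smallfrown i)$, $i\in\{0,1\}$, is the root of a $\delta$-$\beta$-tree in $B_X$. Membership of $g$ in $T$ gives $f(s)=\frac12(g(s^\smallfrown0)+g(s^\smallfrown1))$ with $\frac12\|g(s^\smallfrown0)-g(s^\smallfrown1)\|\ge\delta$. The midpoint identity then yields $\|g(s^\smallfrown i)-f(s)\|=\frac12\|g(s^\smallfrown0)-g(s^\smallfrown1)\|$. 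Gluing the two $\delta$-$\beta$-trees along $T_{\beta+1}=\{\varnothing\}\cup0^\smallfrown T_\beta\cup1^\smallfrown T_\beta$ therefore produces a $\delta$-$(\beta+1)$-tree rooted at $f(s)$.

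Conversely, suppose each leaf value $f(s)$ is the root of a $\delta$-$(\beta+1)$-tree. Its two children $x_0,x_1$ are roots of $\delta$-$\beta$-trees, and we define $g(s^\smallfrown i)=x_i$. Then $g\in T\cap(B_X)^{A_{n+1}}$, and $g\in T^\beta$ by the induction hypothesis. Since $g$ extends $f$, $f$ is non-maximal in $T^\beta$, i.e. $f\in T^{\beta+1}$.

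Limit step. By definition, $f\in T^\alpha$ if and only if $f\in T^{\beta}$ for all $\beta<\alpha$. By the induction hypothesis, this holds if and only if each $f(s)$ is the root of a $\delta$-$\beta$-tree for every $\beta<\alpha$. Given such trees for $\beta=\beta_n$, place them under the successors $n$ of the root in $T_\alpha=\{\varnothing\}\cup\bigcup_n n^\smallfrown T_{\beta_n}$, all sharing the root value $f(s)$. This gives a {\dat} rooted at $f(s)$. Conversely, a {\dat} with root $x$ contains a $\delta$-$\beta_n$-tree with root $x_n=x$ for every $n$, and every $\beta<\alpha$ equals some $\beta_n$.

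Expected obstacle. The Claim itself is routine bookkeeping; the only place needing care is that the gluing at different leaves is independent, because the subtrees above distinct leaves are disjoint. The genuinely non-trivial point is the $\infty$ case, which cannot be handled by the Claim alone. There, Lemma~\ref{l:WFomega_1} (together with the closedness from Lemma~\ref{l:DT closed}) is what upgrades ``$\delta$-$\alpha$-trees for all countable $\alpha$'' to ill-foundedness, and hence to $h(T)=\infty$.
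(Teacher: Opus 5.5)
Your proof is correct and is essentially the paper's argument. The paper shows by induction on $\alpha<\omega_1$ that an $f$ with $\dom(f)=A_k$ lies in $\mathcal{DT}(X,\delta)^\alpha$ exactly when it is the restriction to $A_k$ of a $\delta$-$(\alpha+k)$-tree in $B_X$; this is equivalent to your leaf-wise Claim, which simply spares you from describing $T_{\alpha+k}$ as $A_k$ with copies of $T_\alpha$ grafted at the leaves. Your separate treatment of the case $DT(X,\delta)=\infty$ via Lemmas \ref{l:DT closed} and \ref{l:WFomega_1} makes explicit a point the paper leaves implicit: inductions over countable $\alpha$ alone only give $h(\mathcal{DT}(X,\delta))\ge\omega_1$ there, and this step is exactly where the separability hypothesis enters.
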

	
	\begin{proof} First, 
		we will show by induction on $\alpha<\omega_1$ that, for all $k\in \omega$, if $(x_s)_{s\in T_{\alpha+k}}$ is a $\delta$-$(\alpha+k)$-tree in $B_X$, then $f:A_k \to B_X$ defined by $f(s):=x_s$ for $s\in A_k$, belongs to $\mathcal{DT}(X,\delta)^\alpha$.
		In particular, if $(x_s)_{s\in T_{\alpha}}$ is a $\delta$-$\alpha$-tree in $B_X$, then $(\varnothing \mapsto x_\varnothing) \in \mathcal{DT}(X,\delta)^\alpha$. This will prove that $h(\mathcal{DT}(X,\delta))\geq DT(X,\delta)$.
		
		For $\alpha=0$, 
		it is obvious
		that whenever $(x_s)_{s\in T_{k}}$ is a $\delta$-$k$-tree in $B_X$, 
		then $f=(s\mapsto x_s)_{s\in A_k}$ belongs to $\mathcal{DT}(X,\delta)$ (remember that $A_k=T_k$).
		
		Next, let $\alpha \in (0,\omega_1)$ and assume the statement true for all $\mu<\alpha$. 
		Assume first that $\alpha=\beta+1$ and let $(x_s)_{s\in T_{\alpha+k}}$ be a $\delta$-$(\alpha+k)$-tree in $B_X$. 
		This is also a $\delta$-$(\beta+k+1)$-tree in $B_X$. 
		So by our induction hypothesis, $f=(s\mapsto x_s)_{s\in A_{k+1}} \in \mathcal{DT}(X,\delta)^\beta$. Therefore $f\restricted_{A_k} \in \mathcal{DT}(X,\delta)^{\beta+1}=\mathcal{DT}(X,\delta)^{\alpha}$.
		
		Assume now that  $\alpha$ is a limit ordinal, and let $(\beta_n)_{n\in \omega}$ be an enumeration of all ordinals less than $\alpha$. Let $k\in \omega$ and $(x_s)_{s\in T_{\alpha+k}}$ be a $\delta$-$(\alpha+k)$-tree in $B_X$. 
		Then $(y_s)_{s\in T_{\beta_n+k}}$ defined as
		\[
		y_s=\begin{cases}x_s& \mbox{if } \abs{s}\leq k,\\
			x_{s\restricted_k^\smallfrown (n)^\smallfrown t}& \mbox{if } s=s\restricted_k^\smallfrown t \mbox{ with }\abs{t}\geq 1,
		\end{cases}
		\]
		is a $\delta$-$(\beta_n+k)$-tree in $B_X$.
		Thus the induction hypothesis implies that
		$f=(s\mapsto x_s)_{s\in A_k}\in \mathcal{DT}(X,\delta)^{\beta_n}$, for all $n\in \omega$. So $f\in \mathcal{DT}(X,\delta)^{\alpha}$.

		For the other inequality, we show by induction on $\alpha<\omega_1$ that for all $k\in \omega$,
		if $f \in \mathcal{DT}(X,\delta)^\alpha$ with $\dom(f)=A_k$,
		then there exists a $\delta$-$(\alpha+k)$-tree $(y_s)_{s\in T_{\alpha+k}}$ in $B_X$ such that for all $s \in A_k$, $y_s=f(s)$. In particular, if $(\varnothing \mapsto x)\in \mathcal{DT}(X,\delta)^\alpha$, then $x$ is the root of a $\delta$-$\alpha$-tree in $B_X$. 
		This will show that $h(\mathcal{DT}(X,\delta))\leq DT(X,\delta)$.
		
		For $\alpha=0$, it is clear that if $f\in \mathcal{DT}(X,\delta)$ with $\dom(f)=A_k$, then setting $y_s:=f(s)$ for $s\in T_k=A_k$, we have that $(y_s)_{s\in T_{k}}$ is a $\delta$-$k$-tree in $B_X$. 
		
		Next, let $\alpha \in (0,\omega_1)$ and assume that our statement is true for all $\mu<\alpha$. 
		Assume first that $\alpha=\beta+1$ and let $f \in \mathcal{DT}(X,\delta)^\alpha=
		\mathcal{DT}(X,\delta)^{\beta+1}$ with $\dom(f)=A_k$. 
		There exists $g \in \mathcal{DT}(X,\delta)^\beta$ with $\dom(g)=A_{k+1}$ and $g\restricted_{A_k}=f$. 
		So, by our induction hypothesis, there exists a $\delta$-$(\beta+k+1)$-tree 
		$(y_s)_{s\in T_{\beta+k+1}}$ in $B_X$ such that for all $s\in A_{k+1}$, $y_s=g(s)$. Then $(y_s)_{s\in T_{\alpha+k}}$ is a $\delta$-$(\alpha+k)$-tree in $B_X$, and $y_s=f(s)$, for all $s \in A_k$.
		
		Assume now that  $\alpha$ is a limit ordinal, and let $(\beta_n)_{n\in \omega}$ be an enumeration of all ordinals less than $\alpha$. 
		Let $f \in \mathcal{DT}(X,\delta)^\alpha=\bigcap_{n\in \omega}\mathcal{DT}(X,\delta)^{\beta_n}$ with $\dom(f)=A_k$. 
		By our induction hypothesis, for all $n\in \omega$, there exists a $\delta$-$(\beta_n+k)$-tree $(y_s^n)_{s\in T_{\beta_n+k}}$ in $B_X$ such that $y_s^n=f(s)$, for all $s\in A_k$. 
		We now define 
		\[
		y_s:=
		\begin{cases}
			f(s), &\mbox{for }  s\in A_k,\\
			f(t), &\mbox{for } s=t^\smallfrown (n) \mbox{ with } t\in A_k \mbox{ and } \abs{t}=k,\\
			y_{t^\smallfrown u}^n, &\mbox{for } s=t^\smallfrown (n)^\smallfrown u \mbox{ with } t\in A_k, \abs{t}=k \mbox{  and } u\in T_{\beta_n}.
		\end{cases}
		\]
		It is easy to check that $(y_s)_{s\in T_{\alpha+k}}$ is a $\delta$-$(\alpha+k)$-tree in $B_X$. This finishes our inductive proof.
	\end{proof}
	
	We can now state and prove the desired characterization of spaces failing the infinite tree property.
	
	\begin{theorem}\label{t:ITP-DT(X)} Let $X$ be a separable Banach space. Then $X$ fails the Infinite Tree Property if and only if $DT(X)<\omega_1$. 
	\end{theorem}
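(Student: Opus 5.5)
The plan is to combine Lemma~\ref{l:ITP_illfounded}, Lemma~\ref{l:WFomega_1}, Lemma~\ref{l:DT closed} and Proposition~\ref{p:DT=DT}, and to argue each direction separately.

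\emph{If $X$ fails the Infinite Tree Property, then $DT(X)<\omega_1$.} Fix $\delta>0$. By Lemma~\ref{l:ITP_illfounded}, the tree $\mathcal{DT}(X,\delta)$ is well founded. By Lemma~\ref{l:DT closed} it is a closed $A$-tree on the separable space $X$, so Lemma~\ref{l:WFomega_1} gives $h(\mathcal{DT}(X,\delta))<\omega_1$. Proposition~\ref{p:DT=DT} then gives $DT(X,\delta)<\omega_1$ for every $\delta>0$.

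To bound the supremum over $\delta$ by a countable ordinal, I use monotonicity. A $\delta$-$\alpha$-tree is also a $\delta'$-$\alpha$-tree for every $\delta'<\delta$, so $DT(X,\delta)$ is non-increasing in $\delta$. Hence
\[
DT(X)=\sup_{k\in\N}DT(X,1/k),
\]
a countable supremum of countable ordinals, and therefore $DT(X)<\omega_1$.

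\emph{If $X$ has the Infinite Tree Property, then $DT(X)\ge\omega_1$.} Take $\delta>0$ and $f:A_\infty\to B_X$ with $f\restricted_{A_n}\in\mathcal{DT}(X,\delta)$ for all $n$. I show by induction on $\alpha$ that every $f\restricted_{A_n}$ lies in $\mathcal{DT}(X,\delta)^\alpha$.
\begin{itemize}
\item The case $\alpha=0$ holds by hypothesis.
\item For successors: $f\restricted_{A_n}$ is the restriction of $f\restricted_{A_{n+1}}\in\mathcal{DT}(X,\delta)^\alpha$, so it is non-maximal in $\mathcal{DT}(X,\delta)^\alpha$ and belongs to $\mathcal{DT}(X,\delta)^{\alpha+1}$.
\item For limits, take intersections.
\end{itemize}
Thus no $\mathcal{DT}(X,\delta)^\alpha$ is empty, so $h(\mathcal{DT}(X,\delta))=\infty$. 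Proposition~\ref{p:DT=DT} then gives $DT(X,\delta)=\infty$, and hence $DT(X)=\infty\not<\omega_1$.

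I do not expect a real obstacle, since the descriptive-set-theoretic work is already done in Lemmas~\ref{l:illfounded} and~\ref{l:WFomega_1}. The only points needing care are the monotonicity in $\delta$ and the convention for $\infty$. In particular, Proposition~\ref{p:DT=DT} must be read as allowing the value $\infty$ on both sides, which is how its proof works, since the induction runs over all $\alpha<\omega_1$.
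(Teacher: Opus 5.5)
Your proof is correct and follows the paper's argument. Failing the Infinite Tree Property gives well-foundedness via Lemma~\ref{l:ITP_illfounded}, Lemmas~\ref{l:DT closed} and~\ref{l:WFomega_1} then give countable height, and Proposition~\ref{p:DT=DT} transfers this to $DT(X,\delta)$; in the other direction, ill-foundedness gives height $\infty$. Your monotonicity observation, which reduces $\sup_{\delta>0}DT(X,\delta)$ to the countable supremum $\sup_k DT(X,1/k)$, is a worthwhile detail that the paper's ``it follows that'' leaves implicit, since a supremum of uncountably many countable ordinals could otherwise reach $\omega_1$.
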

	
	\begin{proof}
		Assume first that $X$ has the infinite tree property. Then there exists $\delta>0$ such that $h(\mathcal{DT}(X,\delta))=DT(X,\delta)=\infty$. 
		
		Assume now that $X$ fails the Infinite Tree Property. Then, by Lemma \ref{l:ITP_illfounded}, $\mathcal{DT}(X,\delta)$ is well founded for all $\delta>0$. Since $X$ is separable, by Lemmas \ref{l:WFomega_1} and \ref{l:DT closed}, $DT(X,\delta)=h(\mathcal{DT}(X,\delta))<\omega_1$, for all $\delta>0$. It follows that $DT(X)=\sup_{\delta >0}DT(X,\delta)<\omega_1$. 
	\end{proof}
	
	\begin{example} As a first consequence, we obtain that the dentability index and the dyadic-tree index do not coincide in general. Indeed, the Bourgain-Rosenthal space $X_{BR}$, constructed in \cite{BourgainRosenthal}, is a separable Banach space failing both the RNP and the Infinite Tree Property. Thus $D(X_{BR})=\infty$, while $DT(X_{BR})<\omega_1$. 
	\end{example}
	
	\subsubsection{$\delta$-$\alpha$-sprawling trees.} A similar procedure can be applied for sprawling trees. We shall only indicate the main steps and the small differences. This time, we let  $A_k:=(\set{0,1}\times\omega)^{\leq k}$. Notice that $A_k=S_k$ for every $k<\omega$ where the trees $S_\alpha$ have been defined in Section \ref{s:sprawlingtrees}. 
	Let $X$ be a separable Banach space and $\delta>0$. 
	We denote $\BST(X,\delta)$ the set of all $f \in \bigcup_{k=0}^\infty (B_X)^{A_k}$ satisfying: for all $k\in \N$, all $f\in \mathcal{ST}(X,\delta)\cap (B_X)^{A_k}$, and all $s\in A_{k-1}$, we have 
	\begin{gather*}
		f(s) = \frac12(f(s^\smallfrown (0,n)) + f(s^\smallfrown (1,n))),\  n\in \omega\ \\
		\text{and}\\ 
		\|f(s^\smallfrown (0,n)) - f(s^\smallfrown (0,m))\| \geq \delta,\ n\neq m \in \omega.
	\end{gather*}
	
	\begin{lemma} \label{l:ST-closed}
		$\BST(X,\delta)$ is a closed $A$-tree on $X$.
	\end{lemma}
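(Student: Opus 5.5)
We have to check two things: that $\BST(X,\delta)$ is an $A$-tree on $X$, and that each level $\BST(X,\delta)\cap X^{A_k}$ is closed in $X^{A_k}$ for the topology of pointwise convergence. The argument is the same as for Lemma~\ref{l:DT closed}. First we make the definition explicit, since it is phrased in a slightly self-referential way. For $f\in (B_X)^{A_k}$, we say $f\in\BST(X,\delta)$ exactly when, for every $s\in A_{k-1}$, every $n\in\omega$ and every $m\neq n$ in $\omega$,
\[
f(s)=\tfrac12\bigl(f(s^\smallfrown(0,n))+f(s^\smallfrown(1,n))\bigr)\quad\text{and}\quad \|f(s^\smallfrown(0,n))-f(s^\smallfrown(0,m))\|\ge\delta .
\]
For $k=0$ there is no condition, so the level is all of $(B_X)^{A_0}$.

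\textbf{Tree property.} Take $f\in\BST(X,\delta)$ with $\dom(f)=A_k$ and $k\ge1$. The restriction $f\restricted_{A_{k-1}}$ still takes values in $B_X$. The conditions it must satisfy concern only nodes $s\in A_{k-2}$ and their successors, which lie in $A_{k-1}$. Each of these conditions is among the conditions already satisfied by $f$. Hence $f\restricted_{A_{k-1}}\in\BST(X,\delta)$.

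\textbf{Closedness.} Fix $k$ and write the level $\BST(X,\delta)\cap X^{A_k}$ as an intersection of closed sets in $X^{A_k}$.
\begin{enumerate}
\item For each $a\in A_k$, the set $\{f: f(a)\in B_X\}$ is closed, because the evaluation $f\mapsto f(a)$ is continuous for pointwise convergence and $B_X$ is norm closed.
\item For each $s\in A_{k-1}$ and $n\in\omega$, the set of $f$ satisfying the midpoint identity is closed. It is the preimage of $\{0\}$ under the continuous map $f\mapsto f(s)-\tfrac12\bigl(f(s^\smallfrown(0,n))+f(s^\smallfrown(1,n))\bigr)$, which involves only finitely many coordinates.
\item For each $s\in A_{k-1}$ and $n\neq m$, the set of $f$ with $\|f(s^\smallfrown(0,n))-f(s^\smallfrown(0,m))\|\ge\delta$ is closed, since the norm is continuous and the inequality is non-strict.
\end{enumerate}
There are countably many sets of types 2 and 3, and any intersection of closed sets is closed, so the level is closed.

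There is no real obstacle. The only points to watch are that every single condition involves finitely many coordinates, and that the separation inequality is $\ge\delta$ and not strict, which is what keeps it closed under limits.
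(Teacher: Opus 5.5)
Your proof is correct and is the routine verification the paper intends. The paper gives no written proof of this lemma, and the analogous Lemma~\ref{l:DT closed} is simply declared obvious. Checking the tree property by restriction, and closedness of each level as an intersection of closed sets involving finitely many coordinates, is exactly that omitted argument. It uses the norm topology on $X$ and the non-strict inequality $\ge\delta$.
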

	
	\begin{definition} We say that a Banach space $X$ has the \emph{Infinite Sprawling Tree property} (ISTP) if there exists $\delta>0$ and a map $f:A_\infty \to B_X$ such that for every $n\in \omega$, $f\restricted_{A_n} \in \BST(X,\delta)$.
	\end{definition}
	
	\begin{lemma}\label{l:ISFTP_illfounded} 
		Let $X$ be a Banach space. 
		Then $X$ has the Infinite Sprawling Tree Property if and only if there exists $\delta >0$ such that $\BST(X,\delta)$ is ill founded. 
	\end{lemma}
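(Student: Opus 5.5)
This is a tautology, in the same way as Lemma~\ref{l:ITP_illfounded}. The plan is simply to unfold the definitions on both sides and observe that they say the same thing.

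First we make precise what ill foundedness means for the $A$-tree $\BST(X,\delta)$, where $A_k=(\set{0,1}\times\omega)^{\leq k}$ and $A_\infty=(\set{0,1}\times\omega)^{<\omega}$. By definition, $\BST(X,\delta)$ is ill founded if and only if there is $f\in X^{A_\infty}$ with $f\restricted_{A_n}\in\BST(X,\delta)$ for every $n\in\omega$.

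For the forward implication, suppose $X$ has the ISTP. Then there are $\delta>0$ and $f:A_\infty\to B_X$ such that $f\restricted_{A_n}\in\BST(X,\delta)$ for all $n$. Since $B_X\subset X$, this same $f$, viewed as an element of $X^{A_\infty}$, witnesses that $\BST(X,\delta)$ is ill founded.

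For the converse, suppose $f\in X^{A_\infty}$ witnesses ill foundedness of $\BST(X,\delta)$. We must check that $f$ takes values in $B_X$. Every $a\in A_\infty$ lies in some $A_n$. Moreover $f\restricted_{A_n}\in\BST(X,\delta)\subset\bigcup_k (B_X)^{A_k}$, and its domain is $A_n$, so $f\restricted_{A_n}\in (B_X)^{A_n}$. Hence $f(a)\in B_X$, and so $f:A_\infty\to B_X$ satisfies exactly the definition of the ISTP with this $\delta$.

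There is no genuine obstacle here. The only point needing care is this range check: ill foundedness is phrased with $f\in X^{A_\infty}$, while the ISTP requires values in $B_X$. As shown above, the gap closes automatically because every element of $\BST(X,\delta)$ already takes values in $B_X$. Note also that the closedness of the tree (Lemma~\ref{l:ST-closed}) plays no role in this argument.
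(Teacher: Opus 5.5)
Your proposal is correct and matches the paper. The paper states this lemma without proof, as a tautology in the same way as Lemma~\ref{l:ITP_illfounded}: both sides unfold to the existence of $\delta>0$ and $f$ on $A_\infty$ with $f\restricted_{A_n}\in\BST(X,\delta)$ for all $n$. Your explicit check that any witness of ill foundedness automatically takes values in $B_X$ is exactly the detail the word ``tautology'' leaves implicit.
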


	\begin{proposition}\label{ST=ST}
		Let $X$ be a 
		separable Banach space and $\delta>0$. Then $$ST(X,\delta))=h(\BST(X,\delta)).$$ 
	\end{proposition}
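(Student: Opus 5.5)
We mimic the proof of Proposition \ref{p:DT=DT}, with $T_\alpha$ replaced by $S_\alpha$ and $\mathcal{DT}(X,\delta)$ replaced by $\BST(X,\delta)$. Both inequalities are proved by transfinite induction on $\alpha<\omega_1$, through a statement that is uniform in $k\in\omega$.

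For $h(\BST(X,\delta))\ge ST(X,\delta)$, we prove the following. For all $k\in\omega$, if $(x_s)_{s\in S_{\alpha+k}}$ is a $\delta$-$(\alpha+k)$-sprawling tree in $B_X$, then $f=(s\mapsto x_s)_{s\in A_k}$ belongs to $\BST(X,\delta)^\alpha$.

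The first point to check is that $S_{\alpha+k}$ agrees with $A_k=S_k$ up to level $k$. We also need that every node of length $<k$ is of type (B). This follows by induction on $k$ from the recursive definition $S_{\gamma+1}=\set{\varnothing}\cup\bigcup_n((0,n)^\smallfrown S_\gamma\cup(1,n)^\smallfrown S_\gamma)$. It shows the case $\alpha=0$, and it makes the restriction meaningful in general.

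The successor case $\alpha=\beta+1$ is immediate. Since $\alpha+k=\beta+(k+1)$, the induction hypothesis puts $(x_s)_{s\in A_{k+1}}$ in $\BST(X,\delta)^\beta$, so its restriction to $A_k$ lies in $\BST(X,\delta)^{\beta+1}$.

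In the limit case, each node $t$ of length $k$ in $S_{\alpha+k}$ is of type (C), with successors $t^\smallfrown(0,n)$ carrying copies of $S_{\beta_n}$. For fixed $n$ we form the $\delta$-$(\beta_n+k)$-sprawling tree $(y_s)$ as follows. We set $y_s=x_s$ for $|s|\le k$, and $y_{t^\smallfrown u}=x_{t^\smallfrown(0,n)^\smallfrown u}$ for $|t|=k$ and $u\in S_{\beta_n}\setminus\{\varnothing\}$. This is legitimate since $x_t=x_{t^\smallfrown(0,n)}$. The induction hypothesis then gives $f\in\BST(X,\delta)^{\beta_n}$ for every $n$, hence $f\in\BST(X,\delta)^\alpha$.

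For the reverse inequality $h(\BST(X,\delta))\le ST(X,\delta)$, we prove the following. If $f\in\BST(X,\delta)^\alpha$ with $\dom(f)=A_k$, then there is a $\delta$-$(\alpha+k)$-sprawling tree $(y_s)_{s\in S_{\alpha+k}}$ in $B_X$ extending $f$.

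For $\alpha=0$ this is the definition of $\BST(X,\delta)$. In the successor case, pick $g\in\BST(X,\delta)^\beta$ on $A_{k+1}$ extending $f$; this exists by the definition of the derived tree. Apply the induction hypothesis to $g$, using again that $\beta+k+1=\alpha+k$.

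In the limit case, for each $n$ the induction hypothesis gives a $\delta$-$(\beta_n+k)$-sprawling tree $(y^n_s)$ extending $f$. We glue these trees: $y_s=f(s)$ on $A_k$, $y_{t^\smallfrown(0,n)}=f(t)$ for $|t|=k$, and $y_{t^\smallfrown(0,n)^\smallfrown u}=y^n_{t^\smallfrown u}$ for $u\in S_{\beta_n}$. The conditions at nodes of length $<k$ are those of $f$. Nodes $t$ of length $k$ are of type (C) with constant value. Deeper nodes inherit their conditions from $y^n$.

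The only genuine care needed is the bookkeeping of the structure of $S_{\alpha+k}$. We must verify that nodes of length exactly $k$ in $S_{\alpha+k}$ are of type (C) exactly when $\alpha$ is a limit, and that the subtree above $t^\smallfrown(0,n)$ is a copy of $S_{\beta_n}$ with the same enumeration. This amounts to the identity $S_{\alpha+k}=\{s\in A_k\}\cup\bigcup_{|t|=k,\,n}t^\smallfrown(0,n)^\smallfrown S_{\beta_n}$, proved by induction on $k$. I expect this identity to be the main, though routine, obstacle. Once it is in place, the gluing is verbatim as in Proposition \ref{p:DT=DT}.
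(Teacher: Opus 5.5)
Your proposal is correct and follows essentially the same route as the paper. It uses the same two transfinite inductions, uniform in $k$, with the same re-indexing $y_{t^\smallfrown u}=x_{t^\smallfrown(0,n)^\smallfrown u}$ in the limit case of the first inequality and the same gluing of the trees $(y^n_s)$ in the limit case of the second. The paper only says the $\alpha=0$ and successor steps are ``identical'' to Proposition~\ref{p:DT=DT}, and it leaves implicit the structural identity for $S_{\alpha+k}$ that you isolate, so yours is simply a more explicit write-up of the same argument.
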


	\begin{proof} We follow the scheme of the proof of Proposition \ref{p:DT=DT}. 
		
		To prove that $h(\BST(X,\delta))\geq ST(X,\delta)$, we show by induction on $\alpha<\omega_1$ that, for all $k\in \omega$, $f:A_k \to B_X$ defined by $f(s):=x_s$ for $s\in A_k$, belongs to $\mathcal{ST}(X,\delta)^\alpha$. The arguments for the case $\alpha=0$ and the induction step for $\alpha$ successor ordinal are identical. So assume $\alpha$ is a limit ordinal and that the statement is true for all $\mu<\alpha$. Let $(\beta_n)_{n\in \omega}$ be an enumeration of all ordinals less than $\alpha$. Let $k\in \omega$ and $(x_s)_{s\in S_{\alpha+k}}$ be a $\delta$-$(\alpha+k)$ sprawling tree in $B_X$. 
		Then $(y_s)_{s\in S_{\beta_n+k}}$ defined as
		\[
		y_s=\begin{cases}x_s& \mbox{if } \abs{s}\leq k,\\
			x_{s\restricted_k^\smallfrown (0,n)^\smallfrown t}& \mbox{if } s=s\restricted_k^\smallfrown t \mbox{ with }\abs{t}\geq 1,
		\end{cases}
		\]
		is a $\delta$-$(\beta_n+k)$ sprawling tree in $B_X$.
		Thus the induction hypothesis implies that
		$f=(s\mapsto x_s)_{s\in S_k}=(s\mapsto y_s)_{s \in S_{k}}\in \mathcal{ST}(X,\delta)^{\beta_n}$, for all $n\in \omega$. So $f\in \BST(X,\delta)^{\alpha}$.
		
		Next, we show by induction on $\alpha<\omega_1$ that for all $k\in \omega$,
		if $f \in \BST(X,\delta)^\alpha$ with $\dom(f)=S_k$,
		then there exists a $\delta$-$(\alpha+k)$ sprawling tree $(y_s)_{s\in S_{\alpha+k}}$ in $B_X$ such that for all $s \in A_k=S_k$, $y_s=f(s)$. This will show that $h(\BST(X,\delta))\leq ST(X,\delta)$. Again, the arguments for the case $\alpha=0$ and the induction step for $\alpha$ successor ordinal are the same as for Proposition \ref{p:DT=DT}. So assume $\alpha$ is a limit ordinal and that the statement is true for all $\mu<\alpha$. Let $(\beta_n)_{n\in \omega}$ be an enumeration of all ordinals less than $\alpha$. Let $f \in \BST(X,\delta)^\alpha=\bigcap_{n\in \omega}\BST(X,\delta)^{\beta_n}$ with $\dom(f)=S_k$. By our induction hypothesis, for all $n\in \omega$, there exists a $\delta$-$(\beta_n+k)$ sprawling tree $(y_s^n)_{s\in S_{\beta_n+k}}$ in $B_X$ such that $y_s^n=f(s)$, for all $s\in S_k$. 
		We now define 
		\[
		y_s:=
		\begin{cases}
			f(s), &\mbox{for }  s\in S_k,\\
			f(t), &\mbox{for } s=t^\smallfrown (0,n) \mbox{ with } t\in S_k \mbox{ and } \abs{t}=k,\\
			y_{t^\smallfrown u}^n, &\mbox{for } s=t^\smallfrown (0,n)^\smallfrown u \mbox{ with } t\in S_k, \abs{t}=k \mbox{  and } u\in S_{\beta_n}.
		\end{cases}
		\]
		It is easy to check that $(y_s)_{s\in S_{\alpha+k}}$ is a $\delta$-$(\alpha+k)$ sprawling tree in $B_X$. This finishes our inductive proof. 
	\end{proof}
	
	Similarly, we deduce:

	\begin{theorem}\label{t:ISTP_ST(X)}
		Let $X$ be a separable Banach space. Then $X$ fails the Infinite Sprawling Tree Property if and only if $ST(X)<\omega_1$.
	\end{theorem}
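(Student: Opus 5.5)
The plan is to copy the proof of Theorem \ref{t:ITP-DT(X)} verbatim, replacing the dyadic-tree objects by their sprawling counterparts. The three ingredients are already in place: Lemma \ref{l:ST-closed} ($\BST(X,\delta)$ is a closed $A$-tree on $X$), Lemma \ref{l:ISFTP_illfounded} (ISTP is equivalent to ill-foundedness of some $\BST(X,\delta)$), and Proposition \ref{ST=ST} ($ST(X,\delta)=h(\BST(X,\delta))$). The abstract Lemma \ref{l:WFomega_1} applies because here $A_k=(\set{0,1}\times\omega)^{\le k}$ is a strictly increasing sequence of countable sets with $A_0=\{\varnothing\}\neq\emptyset$. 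Consequently each $X^{A_k}$, with the product (pointwise) topology, is second countable whenever $X$ is separable.

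\emph{First direction.} Suppose $X$ has the ISTP. Then there are $\delta>0$ and $f:A_\infty\to B_X$ with $f\restricted_{A_n}\in\BST(X,\delta)$ for all $n$. None of the restrictions $f\restricted_{A_n}$ is ever removed by the derivation. Indeed, an induction on $\alpha$ shows that each $f\restricted_{A_n}$ lies in $\BST(X,\delta)^\alpha$: at successor steps $f\restricted_{A_n}$ has the extension $f\restricted_{A_{n+1}}$, which lies in the previous derived tree; at limit steps one takes intersections. Hence $\BST(X,\delta)^\alpha\neq\emptyset$ for every ordinal $\alpha$, so $h(\BST(X,\delta))=\infty$. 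By Proposition \ref{ST=ST}, $ST(X,\delta)=\infty$, and therefore $ST(X)=\infty\not<\omega_1$.

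\emph{Second direction.} Suppose $X$ fails the ISTP. By Lemma \ref{l:ISFTP_illfounded}, $\BST(X,\delta)$ is well founded for every $\delta>0$. By Lemma \ref{l:ST-closed} it is closed, and $X$ is separable, so Lemma \ref{l:WFomega_1} gives $h(\BST(X,\delta))<\omega_1$. Proposition \ref{ST=ST} then yields $ST(X,\delta)<\omega_1$ for every $\delta>0$. It remains to bound the supremum over $\delta$. Since $ST(X,\delta)$ is nonincreasing in $\delta$ (a $\delta$-$\alpha$-sprawling tree is also a $\delta'$-one for $\delta'<\delta$), we have $ST(X)=\sup_{m\in\N}ST(X,1/m)$. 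This is a countable supremum of countable ordinals, hence $ST(X)<\omega_1$.

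The only point needing care is the countable cofinality in $\delta$ just used; everything else is routine transcription. The substantive work lies in the lemmas being invoked. If Lemma \ref{l:ST-closed} is considered unproved, it follows from two observations. First, membership of $f$ in $\BST(X,\delta)\cap (B_X)^{A_k}$ is defined by countably many conditions, each involving finitely many coordinates. Second, each such condition ($f(s)=\tfrac12(\cdots)$, $\|\cdot\|\ge\delta$, $f(s)\in B_X$) is closed under pointwise convergence.
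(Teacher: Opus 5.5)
Your proposal is correct and is essentially the paper's argument: the paper simply transcribes the proof of Theorem~\ref{t:ITP-DT(X)} using Lemmas~\ref{l:ST-closed}, \ref{l:ISFTP_illfounded}, \ref{l:WFomega_1} and Proposition~\ref{ST=ST}, exactly as you do. Your reduction of $\sup_{\delta>0}ST(X,\delta)$ to the countable supremum $\sup_m ST(X,1/m)$, using monotonicity in $\delta$, is a worthwhile detail that the paper leaves implicit.
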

	
	\begin{example} V. Kadets and D. Werner proved in \cite{KadetsWerner} that a minor refinement of the Bourgain-Rosenthal space $X_{BR}$, that we shall denote $X_{KW}$, fails the Infinite Tree Property but has the Daugavet Property. On the other hand, a Banach space with the Daugavet property fails PCP (see Theorem~3.2.1 in~\cite{KMRW25}). Therefore $\Phi(X_{KW})=C(X_{KW})=\infty$, while $ST(X_{KW})\le DT(X_{KW})<\omega_1$. 
	\end{example}

	
	\subsection{Dentability index and generalized bushes}\label{s:bushes}
	
	It is well known (see \cite{DiestelUhl}) that the Radon Nikod\'{y}m property and the dentability can be restated in terms of bushes or martingales but, as we have just seen, not always in terms of dyadic trees. In this section we give a quantitative version of this by relating the dentability index with the existence of what we will call the $\alpha$-bush property. The content of this section is essentially known by specialists, but cannot be found in the literature. We take this opportunity to state and prove it. This will also allow us to later formulate a related nonlinear question: see Question \ref{q:embeddingbushes}. 
	
	For that purpose, we define inductively for every $\alpha \in[0,\omega_1)$ a family $\RTa$ of trees in $\N^{<\omega}$ of height $\alpha$, in which each node has either finitely many or infinitely countably many successors.
	
	\begin{itemize}
		\item $\RT_0 = \set{\{\varnothing\}}$.
		
		\item If $T$ is a tree in $\omega^{<\omega}$, then $T \in \RT_{\alpha+1}$ if and only if there exist $n \in \N$ and $T_0, \ldots, T_n \in \RTa$ such that $T = \set{\varnothing}\cup \bigcup_{i =0}^n i^\smallfrown T_i$. 
		
		\item If $\alpha$ is a limit ordinal and $T$ is a tree in $\omega^{<\omega}$, then $T \in \RT_{\alpha}$ if and only if there exist an enumeration $(\beta_n)_{n\in \omega}$ of all ordinals less than $\alpha$ and, for every $n \in \omega$, a tree $T_{\beta_n} \in \RT_{\beta_n}$ such that $T = \set{\varnothing}\cup \bigcup_{n \in \omega} n^\smallfrown T_{\beta_n}$.
	\end{itemize}

	Next, we give the definition of a $\delta$-$\alpha$-bush. 
	\begin{definition}
		\label{def: dab}
		For $\delta > 0$ and $\alpha \in [0,\omega_1)$, we call \textit{\dab}\ in $X$ a family $(x_s)_{s \in T} \subset X$ where $T \in \RTa$ and such that, for every $s \in T$, the following conditions are satisfied:
		\begin{itemize}
			\item if the set $s^+$ of successors of $s$ is finite and nonempty, then there exists $(\lambda_t)_{t\in s+}\subset (0,1]$ such that $x_s = \sum_{t \in s^+} \lambda_t x_t$,  $\sum_{t \in s^+} \lambda_t = 1$, and $\norm{x_s - x_t} \ge \delta$, for all $t \in s^+$.
			
			\item if $s$ has infinitely countably many successors $s^\smallfrown n$, $n \in \omega$, then $x_s = x_{s^\smallfrown n}$ for every $n \in \omega$.
		\end{itemize}
		
	\end{definition}
	Note that $\delta$-$\alpha$-trees are particular examples of \dab es. 
	
	\begin{definition}
		We say that a Banach space $X$ has the \textit{$\alpha$-bush-property} if there exists some $\delta > 0$ such that $B_X$ contains a \dab.
	\end{definition}
	
	We can now state the following characterization.  
	
	\begin{theorem} Let $X$ be a Banach space and $\alpha \in [0,\omega_1)$. Then $X$ has the $\alpha$-bush-property if and only if $D(X) > \alpha$.
	\end{theorem}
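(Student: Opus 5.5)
We prove the two implications separately, each by transfinite induction on $\alpha$, in the same spirit as Proposition \ref{prop: link between dentability index and d-a-t}. Recall that $D(X)>\alpha$ means that there is $\eps>0$ with $\d{B_X}{\eps}{\alpha}\neq\emptyset$.

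\emph{Bush $\Rightarrow D(X)>\alpha$.} Let $(x_s)_{s\in T}\subset B_X$ be a \dab, with $T\in\RTa$. We show by induction that $x_\varnothing\in\d{B_X}{\delta}{\alpha}$; the case $\alpha=0$ is trivial.

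If $\alpha=\beta+1$, then $T=\set{\varnothing}\cup\bigcup_{i\le n}i^\smallfrown T_i$ with $T_i\in\RT_\beta$. The subfamilies rooted at $x_i$, $i\le n$, are $\delta$-$\beta$-bushes, so by the induction hypothesis $x_i\in\d{B_X}{\delta}{\beta}$. This set is convex and $x_\varnothing=\sum_i\lambda_i x_i$, so $x_\varnothing\in\d{B_X}{\delta}{\beta}$ as well. Let $S$ be an open half-space containing $x_\varnothing$. Since the $\lambda_i$ are positive, $S$ contains some $x_i$. This $x_i$ satisfies $\norm{x_\varnothing-x_i}\ge\delta$, so $\diam(S\cap\d{B_X}{\delta}{\beta})\ge\delta$. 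Hence $x_\varnothing$ is not removed, and $x_\varnothing\in\d{B_X}{\delta}{\alpha}$.

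If $\alpha$ is a limit ordinal, then $x_\varnothing=x_n$ is the root of a $\delta$-$\beta_n$-bush for every $n$. The induction hypothesis gives $x_\varnothing\in\bigcap_n\d{B_X}{\delta}{\beta_n}=\d{B_X}{\delta}{\alpha}$.

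\emph{$D(X)>\alpha\Rightarrow$ bush.} Fix $\eps$ with $\d{B_X}{\eps}{\alpha}\ne\emptyset$ and put $r=\eps/4$. The key separation fact is the following. If $C$ is closed convex and $x\in[C]'_\eps$, then
$$x\in\overline{\mathrm{conv}}\,\big(C\setminus B(x,r)\big).$$
Indeed, otherwise Hahn--Banach gives an open half-space $S\ni x$ disjoint from $C\setminus B(x,r)$. Then $S\cap C\subset B(x,r)$, so $\diam(S\cap C)\le 2r<\eps$, and $x$ would have been removed.

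The main obstacle is that this only places $x$ in the \emph{closure} of the convex hull, whereas a bush requires exact finite convex combinations. We handle this by perturbing via translations, which preserve the bush relations. We prove by induction on $\alpha$ the following statement: for every $x\in\d{B_X}{\eps}{\alpha}$ and every $\eta\in(0,r/2)$, there is an $(r/2)$-$\alpha$-bush with root $x$ contained in $(1+\eta)B_X$.

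\emph{Successor step $\alpha=\beta+1$.} Choose $y=\sum_{i\le n}\lambda_iy_i$ with $y_i\in\d{B_X}{\eps}{\beta}\setminus B(x,r)$, $\lambda_i>0$ summing to $1$, and $\norm{x-y}<\eta/2$. Apply the induction hypothesis at each $y_i$ with tolerance $\eta/2$, and translate each resulting bush by $x-y$. This gives children $x_i=y_i+(x-y)$ with $x=\sum\lambda_ix_i$ exactly and
$$\norm{x-x_i}=\norm{y-y_i}\ge r-\eta/2\ge r/2.$$
All points lie in $(1+\eta/2+\eta/2)B_X=(1+\eta)B_X$. The tolerances stay summable because each branch of $T$ has finite length and we halve the tolerance at each descent.

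\emph{Limit step.} Here $x\in\d{B_X}{\eps}{\beta_n}$ for all $n$. Take a bush rooted at $x$ for each $\beta_n$ with the same $\eta$, and glue them under a root with countably many children all equal to $x$.

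Finally, dividing the bush by $1+\eta$ gives an $\frac{r}{2(1+\eta)}$-$\alpha$-bush in $B_X$. Hence $X$ has the $\alpha$-bush property.
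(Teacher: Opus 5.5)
Your proof is correct and is essentially the paper's: the forward direction is the same convexity-plus-slice induction, and the converse is the same induction based on the Hahn--Banach lemma, with only the bookkeeping rearranged. You keep the root exactly at $x$ and use the translation in the successor step to absorb the closed-convex-hull error, halving the tolerance, whereas the paper keeps the root only $\eta$-close to $x$ at successor steps and uses the translation in the limit step to align the roots of the sub-bushes. Your radius $r=\eps/4$ is also slightly safer than the radius $\eps/2$ in the paper's lemma, where the separation argument only yields $\diam(S\cap C)\le\eps$ rather than $<\eps$; this is harmless there, since any radius below $\eps/2$ works.
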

	
	We state and prove each implication in two separate propositions. 
	
	\begin{proposition} Let $X$ be a Banach space such that $B_X$ contains a \dab\ $(x_s)_{s \in T}$ for some $\alpha \in [0, \omega_1)$, $T \in \RTa$ and $\delta > 0$. Then,  $x_\varnothing \in \d{B_X}{\delta}{\alpha}$, and thus $D(X) > \alpha$.
	\end{proposition}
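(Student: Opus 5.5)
We argue by transfinite induction on $\alpha$, following the proof of Proposition \ref{prop: link between dentability index and d-a-t}. The claim to be proved by induction is the following: for every $T \in \RTa$ and every \dab\ $(x_s)_{s\in T}\subset B_X$, the root $x_\varnothing$ belongs to $\d{B_X}{\delta}{\alpha}$. Once this is established, $\d{B_X}{\delta}{\alpha}\neq\emptyset$, so $D(X,\delta)>\alpha$ and hence $D(X)>\alpha$. The case $\alpha=0$ is trivial, since $\d{B_X}{\delta}{0}=B_X$.

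For the successor step, let $\alpha=\beta+1$ and write $T=\set{\varnothing}\cup\bigcup_{i=0}^n i^\smallfrown T_i$ with $T_i\in\RT_\beta$. For each $i$, the family $(x_{i^\smallfrown t})_{t\in T_i}$ is a $\delta$-$\beta$-bush in $B_X$ with root $x_{(i)}$, so by the induction hypothesis $x_{(i)}\in\d{B_X}{\delta}{\beta}$ for every $i\le n$. The root $\varnothing$ has finitely many successors, so $x_\varnothing=\sum_{i}\lambda_i x_{(i)}$ is a convex combination with all $\lambda_i>0$. Since derivations of convex sets by open half-spaces stay convex (by induction, using that a closed convex set minus a union of open half-spaces is convex), $\d{B_X}{\delta}{\beta}$ is convex and therefore contains $x_\varnothing$.

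It remains to show that no open half-space $S=\{x^*>a\}$ containing $x_\varnothing$ satisfies $\diam(S\cap\d{B_X}{\delta}{\beta})<\delta$. This is the one place where the argument departs from the dyadic case. Since $x^*(x_\varnothing)=\sum\lambda_i x^*(x_{(i)})>a$ and the weights $\lambda_i$ are positive, some index $i$ satisfies $x^*(x_{(i)})>a$. Then $x_\varnothing$ and $x_{(i)}$ both lie in $S\cap\d{B_X}{\delta}{\beta}$, and $\norm{x_\varnothing-x_{(i)}}\ge\delta$. Hence $\diam(S\cap \d{B_X}{\delta}{\beta})\ge\delta$, so $x_\varnothing$ is not removed by the derivation and lies in $\d{B_X}{\delta}{\beta+1}$.

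For the limit step, let $\alpha$ be a limit ordinal and $T=\set{\varnothing}\cup\bigcup_n n^\smallfrown T_{\beta_n}$, where $(\beta_n)$ enumerates the ordinals below $\alpha$. The root has infinitely many successors, so $x_\varnothing=x_{(n)}$ for every $n$. Each $x_{(n)}$ is the root of the $\delta$-$\beta_n$-bush $(x_{n^\smallfrown t})_{t\in T_{\beta_n}}$, so by the induction hypothesis $x_\varnothing\in\d{B_X}{\delta}{\beta_n}$ for all $n$. Therefore $x_\varnothing\in\bigcap_{\beta<\alpha}\d{B_X}{\delta}{\beta}=\d{B_X}{\delta}{\alpha}$.

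The only step that needs care is the successor one: there we need convexity of $\d{B_X}{\delta}{\beta}$ together with the observation that positive weights force some successor into every slice containing the root.
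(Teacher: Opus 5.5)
Your proof is correct and follows the paper's argument essentially step for step. It uses transfinite induction: the limit case is handled through $x_\varnothing=x_{(n)}$ and $\bigcap_n \d{B_X}{\delta}{\beta_n}$, and the successor case uses convexity of $\d{B_X}{\delta}{\beta}$ together with the fact that any open half-space containing the convex combination $x_\varnothing$ must contain some $x_{(i)}$. The paper phrases that last fact as convexity of the complement of the slice, while you derive it from $x^*(x_\varnothing)=\sum_i\lambda_i x^*(x_{(i)})$; this is the same observation.
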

	
	\begin{proof} We proceed by induction. The statement is obviously true for $\alpha=0$. Assume it is true for all $\gamma<\alpha$ and that $(x_s)_{s \in T}$ is a \dab \ in $B_X$. 
		
		If $\alpha$ is a limit ordinal, and $(\beta_n)_{n\in \omega}$ is an enumeration of all ordinals less than $\alpha$  such that  $T = \set{\varnothing}\cup \bigcup_{n \in \omega} n^\smallfrown T_{\beta_n}$. Then, for every $n \in \omega$, $x_\varnothing = x_n$ is the root of a $\delta$-$\beta_n$-bush. Applying the induction hypothesis, we obtain that 
		$x_\varnothing \in \bigcap_{n \in \omega} \d{B_X}{\delta}{\beta_n} = \d{B_X}{\delta}{\alpha}.$
		
		Assume now that $\alpha = \beta +1$ is a successor ordinal. We can write $x_\varnothing= \sum_{i=0}^r \lambda_i x_i$ where $r\in \N$, $\{(0),\ldots,(r)\}$ is the set of immediate successors of $\varnothing$ in $T$, $\lambda_0, \ldots, \lambda_r \in (0,1]$, $\sum_{i=0}^r \lambda_i = 1$, and $\|x_\varnothing - x_i\| \geq \delta$ for all $i \in \set{0, \ldots, r}$. For all $i\le r$, $x_i$ is the root of a \dbb\ in $B_X$ and, by induction hypothesis, $x_i \in \d{B_X}{\delta}{\beta}$. Since $\d{B_X}{\delta}{\beta}$ is convex it also contains $x_\varnothing$. Let now $S$ be an open slice containing $x_\varnothing$. The complementary of $S$ being convex, there exists $i \in \set{0, \cdots, r}$ such that $x_i \in S$, and then $\norm{x_\varnothing - x_i} \ge \delta$, so $\diam(S \cap \d{B_X}{\delta}{\beta}) \ge  \delta$. This shows that $x_\varnothing \in \d{B_X}{\delta}{\alpha}$. 
		
	\end{proof}
	
	To prove the converse, we shall need the following classical consequence of the Hahn-Banach separation theorem.
	
	\begin{lemma}
		\label{lemma: HB separation thm}
		Let $C$ be a bounded closed convex set. Then for every $\varepsilon > 0$, for every $x \in \d{C}{\varepsilon}{}$, we have that $x$ belongs to the closed convex hull of $C \setminus B(x, \frac{\varepsilon}{2})$. 
	\end{lemma}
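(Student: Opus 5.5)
The plan is to argue by contradiction with the Hahn--Banach separation theorem. Throughout, $B(x,r)$ is read as the open ball. Then $C\setminus B(x,\frac{\varepsilon}{2})$ is the set of points of $C$ at distance at least $\frac{\varepsilon}{2}$ from $x$. Put $K:=\overline{\mathrm{conv}}\big(C\setminus B(x,\frac{\varepsilon}{2})\big)$, assume $x\in \d{C}{\varepsilon}{}$ but $x\notin K$, and aim to exhibit an open half-space $S$ with $x\in S$ and $\diam(S\cap C)<\varepsilon$. This contradicts $x\in\d{C}{\varepsilon}{}$, since the derivation removes every such slice.

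\emph{Step 1 (separation).} If $K=\emptyset$, then $C\subset B(x,\frac{\varepsilon}{2})$ and any half-space containing $C$ is used below in place of $S$. Otherwise $K$ is closed and convex and $x\notin K$, so Hahn--Banach gives $f\in X^*$ and $b\in\R$ with $\sup_K f\le b<f(x)$. For every $a\in(b,f(x))$, the half-space $S_a=\{f>a\}$ contains $x$. Moreover every $y\in C\cap S_a$ lies outside $K$, hence satisfies $\|y-x\|<\frac{\varepsilon}{2}$. The triangle inequality then gives $\diam(S_a\cap C)\le\varepsilon$.

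\emph{Step 2 (strict inequality).} This is the step I expect to be the main obstacle. Step 1 only yields the non-strict bound $\diam\le\varepsilon$, while the derivation needs $\diam<\varepsilon$. Enlarging the radius does not help: $x\in\d{C}{\varepsilon}{}$ implies $x\in\d{C}{\varepsilon'}{}$ only for $\varepsilon'<\varepsilon$, which gives the weaker conclusion with the larger set $C\setminus B(x,\frac{\varepsilon'}{2})$. So I would instead shrink the slice. I would try to show that
\[
r(a):=\sup\{\|y-x\|:\ y\in C,\ f(y)>a\}<\tfrac{\varepsilon}{2}\quad\text{for some } a\in(b,f(x)),
\]
which immediately gives $\diam(S_a\cap C)\le 2r(a)<\varepsilon$.

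I would attack this bound by contradiction, using the margin $\eta:=f(x)-b>0$ and the boundedness of $C$. Suppose it fails. Then there are $y_n\in C$ with $f(y_n)\to f(x)$ and $\|y_n-x\|\to\frac{\varepsilon}{2}$. The goal is to produce from them points of $C$ at distance at least $\frac{\varepsilon}{2}$ from $x$ but with $f$-value above $b$, which would contradict $\sup_K f\le b$. The first thing I would try is a convex combination of $y_n$ with a suitable point of $C$, for instance a point realizing the slice diameter; convexity of $C$ keeps such combinations in $C$, and one then needs both the distance to $x$ and the value of $f$ to be controlled.

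I am genuinely unsure this closes. The difficulty is that in a non-strictly-convex space the distances $\|y_n-x\|$ may approach $\frac{\varepsilon}{2}$ without any point of $C$ at distance at least $\frac{\varepsilon}{2}$ having large $f$-value. If the step does not close this way, I would recheck the intended ball convention, since the argument of Step 1 is otherwise complete. Once Step 2 is established, the lemma follows directly from Step 1.
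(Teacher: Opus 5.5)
\textbf{There is a genuine gap at Step 2, and no argument can close it: the statement as written is false in infinite dimensions, under either ball convention.} Your Step 1 is exactly the Hahn--Banach argument the paper intends; the paper gives no proof beyond calling the lemma a classical consequence of separation. Your diagnosis is also correct: separation only produces a slice $S\ni x$ with $\diam(S\cap C)\le\varepsilon$.

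\emph{Counterexample.} Take $X=\ell_2$, $C=\{y\in\ell_2:\ \sum_{n\ge1}(1+\frac1n)|y_n|\le1\}$, $x=0$ and $\varepsilon=2$.
\begin{enumerate}
\item $C$ is bounded, closed and convex. For $y\in C\setminus\{0\}$ we have $\|y\|_2\le\|y\|_1<\sum_n(1+\frac1n)|y_n|\le1$. So no point of $C$ has norm $\ge 1$, and $C\setminus B(0,1)$ and its closed convex hull are empty.
\item Let $S=\{g>a\}$ be an open half-space containing $0$. Then $a<0$ and $g(e_n)\to0$, so $\pm\frac{n}{n+1}e_n\in S\cap C$ for all large $n$. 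These two points are $\frac{2n}{n+1}$ apart, so $\diam(S\cap C)\ge 2$, and therefore $0\in d^1_2(C)$.
\end{enumerate}
This is precisely your case $K=\emptyset$. It realizes the scenario you feared: the distances to $x$ approach $\frac{\varepsilon}{2}$ but are never attained. Step 2 does work when $C$ is norm compact, e.g.\ in finite dimensions. There, for $a\in(b,f(x))$, the compact set $\{y\in C:\ f(y)\ge a\}$ lies in the open ball $B(x,\frac{\varepsilon}{2})$, so its maximal distance to $x$ is $<\frac{\varepsilon}{2}$.

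\emph{What is true.} Your Step 1, together with your remark that $d^1_\varepsilon(C)\subset d^1_{\varepsilon'}(C)$ for $\varepsilon'<\varepsilon$, already proves the version with slack. If $x\in d^1_\varepsilon(C)$ and $0<\varepsilon'<\varepsilon$, then $x\in\overline{\mathrm{conv}}\{y\in C:\ \|y-x\|\ge\frac{\varepsilon'}{2}\}$. Otherwise Step 1, run with $\varepsilon'$, produces a slice containing $x$ whose intersection with $C$ has diameter at most $\varepsilon'<\varepsilon$. Since $\varepsilon'<\varepsilon$ is arbitrary, the same holds with the strict inequality $\|y-x\|>\frac{\varepsilon'}{2}$.

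This weaker form is all the paper actually uses. In the successor step of the bush proposition, one only needs $\|x-x_i\|-\eta>\frac{\varepsilon}{3}$. Since $\eta<\frac{\varepsilon}{6}$, this holds with $\|x-x_i\|\ge\frac{\varepsilon'}{2}$ for any $\varepsilon'\in(\frac{2\varepsilon}{3}+2\eta,\varepsilon)$. So the right move is to stop after Step 1, state the lemma with some $\varepsilon'<\varepsilon$ in place of $\varepsilon$, and drop Step 2.
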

	
	\begin{proposition}
		Let $X$ be a Banach space such that $D(X) > \alpha$ for some $\alpha \in [0, \omega_1)$. Then, $X$ has the $\alpha$-bush property.  
	\end{proposition}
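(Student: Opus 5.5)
The plan is to fix $\eps>0$ with $D(X,\eps)>\alpha$, so that $\d{B_X}{\eps}{\alpha}\neq\emptyset$. We then prove by transfinite induction on $\alpha$ the following stronger statement:
\[
(\ast)\quad \text{every } x\in \d{B_X}{\eps}{\alpha} \text{ is the root of a } \delta\text{-}\alpha\text{-bush in } B_X, \text{ for some fixed } \delta=\delta(\eps)>0.
\]
We aim for $\delta=\eps/4$. Every set $\d{B_X}{\eps}{\beta}$ is closed, bounded and convex, and these sets decrease in $\beta$. The case $\alpha=0$ is trivial. If $\alpha$ is a limit ordinal, fix any enumeration $(\beta_n)$ of the ordinals below $\alpha$. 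A point $x\in\d{B_X}{\eps}{\alpha}$ lies in every $\d{B_X}{\eps}{\beta_n}$. By the induction hypothesis it is the root of a $\delta$-$\beta_n$-bush on some $T_{\beta_n}\in\RT_{\beta_n}$. Gluing these as $\set{\varnothing}\cup\bigcup_n n^\smallfrown T_{\beta_n}$, with $x_\varnothing=x_n=x$, gives a tree in $\RTa$ and the required bush.

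For the successor step $\alpha=\beta+1$, let $C=\d{B_X}{\eps}{\beta}$ and $x\in \d{C}{\eps}{}$. By Lemma \ref{lemma: HB separation thm}, $x$ lies in the closed convex hull of $C\setminus B(x,\eps/2)$. Hence there is a finite convex combination $y=\sum_{i=0}^r\lambda_i z_i$ with $z_i\in C$, $\norm{z_i-x}\ge \eps/2$, $\lambda_i>0$, and $\norm{y-x}<\eta$ for a small $\eta$. We then need the combination to be \emph{exact}. The natural fix is to set $z_i'=z_i+(x-y)$, so that $x=\sum\lambda_i z_i'$ and $\norm{z_i'-x}\ge \eps/2-\eta\ge\eps/4$. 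The problem is that $z_i'$ may leave $C$ and $B_X$. This is the main obstacle.

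The first remedy I would try is a shrinking trick. Choose the combination so that all points stay inside a scaled copy: replace $z_i$ by $z_i''=(1-\theta)z_i+\theta x$, which still lies in $C$ by convexity. Then $x-\sum\lambda_i z_i''=(1-\theta)(x-y)$. This alone does not place the error inside $C$, so instead I would build the bush through an iterative correction. Write $x=\sum\lambda_i z_i+(x-y)$ and note that $x-y$ is tiny, so $x$ is approximated by the combination. Then, at the next level, refine each $z_i$ (which lies in $C$) using the induction hypothesis, and absorb the error through a geometrically decreasing sequence $\eta_k$. The induction hypothesis would then have to be strengthened to say that every $x\in\d{B_X}{\eps}{\alpha}$ is the root of a bush in $B_X$ whose combinations at each node are exact, with the leaves and the other vertices possibly perturbed by at most $\sum\eta_k$ away from $\d{B_X}{\eps}{\cdot}$.

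A cleaner alternative, which I would try first, is to carry out the whole construction for $\alpha$ inside $(1-\tau)B_X$ with spare room. Apply the induction hypothesis to the slightly rescaled set and translate by $x-y$. Translations preserve the bush conditions (convex combinations, constant-parent nodes, and distances), and a translation by less than $\tau$ keeps everything within $B_X$. Concretely, we prove $(\ast)$ in the form: for $x\in\d{B_X}{\eps}{\alpha}$ and every $\tau>0$ there is a $\delta$-$\alpha$-bush rooted at $x$ contained in $(1+\tau)B_X$. In the successor step, apply the induction hypothesis to each $z_i$ with $\tau/2$, take $\eta<\tau/2$, and translate the subbush at $z_i$ by $x-y$; the root conditions then hold exactly. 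At the end, rescaling by $(1+\tau)^{-1}$ yields a bush in $B_X$ with constant $\delta/(1+\tau)\ge\eps/8$, so $X$ has the $\alpha$-bush property. The translation argument also passes through limit steps, using the same $\tau$ for every branch.
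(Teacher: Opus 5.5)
Your final argument (the last paragraph) is correct, and it is organized differently from the paper's proof. The earlier paragraphs (the shrinking trick and the iterative correction) are dead ends you can simply drop.

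The paper proves the following: for all $\varepsilon,\eta\in(0,1)$, $M>1$ and $x\in d_\varepsilon^\alpha(B_X)$, there is an $\frac{\varepsilon}{3}$-$\alpha$-bush in $MB_X$ whose root is only \emph{$\eta$-close} to $x$. In its successor step it takes the Hahn--Banach combination $\sum_i\lambda_i x_i\approx x$ and applies the hypothesis to get bushes with roots $x^i_\varnothing\approx x_i$. It then declares the new root to be $\sum_i\lambda_i x^i_\varnothing$, so no translation is needed there. The price is paid at limit stages. There the definition forces all branch roots to coincide, so each branch bush is translated by $x-x^n_\varnothing$, and the shrinking slack $M\to\frac12(M+1)$ absorbs this.

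You do the opposite. You keep the root exactly at the prescribed point, so limit steps are pure gluing. You pay at successor steps by translating every child bush by the common vector $x-y$, halving the slack $\tau$.

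Both routes rest on the same two facts: the separation lemma, and invariance of the bush conditions under translation. Both also use a radius slack that is removed by a final rescaling. Your version has the slightly cleaner induction hypothesis (exact roots at any point of $d_\varepsilon^\alpha(B_X)$), at the cost of a marginally worse constant: $\varepsilon/4$ before rescaling, versus $\varepsilon/3$ in the paper.

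When you write it up, state the hypothesis explicitly: for fixed $\varepsilon$, every $x\in d^\alpha_\varepsilon(B_X)$ and every $\tau\in(0,1]$ admit an $\frac{\varepsilon}{4}$-$\alpha$-bush rooted at $x$ in $(1+\tau)B_X$. In the successor step, choose $\eta<\min(\tau/2,\varepsilon/4)$. This choice also guarantees at least two children, since $\norm{z_i-x}\ge\varepsilon/2>\norm{y-x}$.
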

	
	\begin{proof} It will clearly be enough to prove by induction on $\alpha \in [0,\omega_1)$ the following statement  that we denote $(H_\alpha)$: 
		for all $\eps \in (0,1)$, $\eta\in (0,1)$, $M>1$, and $x\in \d{B_X}{\varepsilon}{\alpha}$, there exists an $\frac{\varepsilon}{3}$-$\alpha$-bush in $MB_X$, whose root $x_\varnothing$ satisfies $\norm{x_\varnothing - x} < \eta$. 
		
		The statement $(H_0)$ is obvious: just set $x_\varnothing=x$. So let $\alpha \in (0,\omega_1)$ and assume that $(H_\beta)$ is true for all $\beta<\alpha$. 
		
		Suppose first that $\alpha=\beta+1$ and fix $\eps \in (0,1)$, $\eta\in (0,1)$, $M>1$, and $x\in \d{B_X}{\varepsilon}{\beta+1}$. Assume, as we may that $\eta<\frac\eps6$. By Lemma \ref{lemma: HB separation thm}, there exist $n\in \N$, $x_0, \ldots, x_n \in \d{B_X}{\varepsilon}{\beta}$ and $\lambda_0, \ldots, \lambda_n \in (0,1]$ such that 
		\[\sum_{i=0}^n \lambda_i = 1,\ \Big\|x - \sum_{i=0}^n \lambda_i x_i\Big\|<\frac\eta3,\ \text{and}\  \|x-x_i\|>\frac\eps2,\ \text{for all}\ i\in \{0,\ldots,n\}.\]
		The induction hypothesis implies that, for every $i \in \set{0, \ldots, n}$, there exist $T_i \in \RTb$ along with $(x_t^i)_{t \in T_i}$ an $\frac{\varepsilon}{3}$-$\beta$-bush in $MB_X$ such that $\|x_\varnothing^i - x_i\|<\frac\eta3$. Let us set $T \coloneqq \set{\varnothing}\cup \bigcup_{i = 0}^n (i)^\smallfrown T_i$, $z_\varnothing \coloneqq \sum_{i=0}^n \lambda_i x_\varnothing^i$ and, for $s \in T \setminus \set{\varnothing}$, $z_s \coloneqq x_t^i$ for $i \in \set{0, \ldots, n}$ and $t \in T_i$ such that $s = i^\smallfrown t$. It is clear that $T \in \RTa$ and $(z_s)_{s \in T} \subset MB_X$. It follows from the convexity of the norm that $\|x-z_\varnothing\|<\frac{2\eta}{3}$. From this, we deduce that 
		\[\norm{z_\varnothing - x_\varnothing^i} \geq \norm{x - x_i} - \norm{x - z_\varnothing} - \norm{x_\varnothing^i - x_i}> \frac{\varepsilon}{2} - \eta>\frac\eps3.
		\]
		Therefore $(z_s)_{s \in T}$ is an $\frac{\varepsilon}{3}$-$\alpha$-bush. This finishes the successor case. 
		
		Assume now that  $\alpha$ is a limit ordinal and let $(\beta_n)_{n\in \omega}$ be an enumeration of all ordinals less than  $\alpha$. Fix $\eps \in (0,1)$, $\eta\in (0,1)$, $M>1$, and $x\in \d{B_X}{\varepsilon}{\alpha}=\bigcap_{n=0}^\infty \d{B_X}{\varepsilon}{\beta_n}$. Assume, as we may, that $\eta<\frac12(M-1)$. By induction hypothesis, for every $n \in \omega$, there exists an $\frac{\varepsilon}{3}$-$\beta_n$-bush $(x_t^n)_{t \in T_n}$ in $\frac12(M+1)B_X$ such that $\|x-x_\varnothing^n\|<\eta$. We set $T \coloneqq \set{\varnothing}\cup \bigcup_{n \in \omega} (n)^\smallfrown T_n$, $z_\varnothing=x$, and $z_s \coloneqq x_t^n + x - x_\varnothing^n$ for $n \in \omega$, $t \in T_n$ such that $s = n^\smallfrown t$. Note that $z_n = x_\varnothing^n + x - x_\varnothing^n = x = z_\varnothing$, for all $n\in \omega$. It is clear that $T \in \RTa$ and that for all $s\in T$, $\|z_s\|\le \frac12(M+1)+\eta \le M$. Thus $(z_s)_{s\in T}$ is an $\frac{\varepsilon}{3}$-$\alpha$-bush rooted at $x$ and included in $MB_X$. This finishes the limit case. 
	\end{proof}
	
	

	
	\section{Diamond graphs and sub-Lipschitz embeddings.}\label{s:diamonds}
	We now introduce the metric spaces and related notions of embeddings that will be studied throughout the remainder of this paper. 
	First, we recall the construction of diamond graphs of ordinal height introduced in~\cite{Basset2025}.
	\begin{definition}\label{d:Diamonds}
		Let $1\leq \kappa \leq \omega$. We are going to define inductively the \emph{$\kappa$-branching diamond graphs} $D_\alpha^\kappa$, for $\alpha \in [0,\omega_1)$ as follows. We set  $D_0^\kappa=\{t_0,b_0\}$, with $d_0^\kappa(t_0,b_0)=1$. Then, we define  $D_1^\kappa=\set{t_1,b_1} \cup \set{x_i:0\leq i< \kappa}$. We consider the complete bipartite graph on these vertices (edges connect every vertex in the first set to every vertex in the second set). The edge $\set{t_1,x_i}$ will be denoted $(i,+)$ and the edge $\set{b_1,x_i}$ will be denoted $(i,-)$. 
		We define $d=d_1=d_1^\kappa$ on $D_1^\kappa$ as $\frac12$ times the shortest path metric corresponding to this graph structure, i.e. $d(t_1,b_1)=1$.
		The points $t_1, b_1$ are called \emph{top} and \emph{bottom}, respectively. 
		Similarly for the points $t_\alpha, b_\alpha$ defined below.
		
		When $D_\alpha^\kappa$ has been defined we define $D_{\alpha+1}^\kappa$ as the weighted graph, together with its shortest path metric, where each edge in $D_1^\kappa$ (that we call original $D_1^\kappa$) has been replaced by $D_\alpha^\kappa$. This copy is called $D_\alpha^{\kappa(j,\pm)}$ where $(j,\pm)$ corresponds to the position of the edge being replaced. When there is no ambiguity about the number of branchings, we will drop $\kappa$ and simply write $D_\alpha^{(j,\pm)}$. The metric $d_{\alpha+1}=d_{\alpha+1}^\kappa$ restricted to any of these copies of $D_\alpha^\kappa$ is $\frac12 d_\alpha^\kappa$. 
		The ``$2+\kappa$'' vertices of the original $D_1^\kappa$ in $D_{\alpha+1}^\kappa$ are denoted $t_{\alpha+1}$, $b_{\alpha+1}$, $x^i_{\alpha+1}$, $i<\kappa$. Observe that $t_{\alpha+1}=t_\alpha^{(j,+)}$ and $b_{\alpha+1}=b_\alpha^{(j,-)}$ for every $j<\kappa$. 
		Whenever we write $D_{\alpha+1}^\kappa=\bigcup_{j<\kappa} D_\alpha^{(j,+)} \cup D_\alpha^{(j,-)}$ (or similar expressions), we tacitly assume this identification. Then, $d_{\alpha+1}^\kappa$ being defined on $D_{\alpha+1}^\kappa$ as the shortest path metric, we still have  $d_{\alpha+1}^\kappa(t_{\alpha+1},b_{\alpha+1})=1$. 
		
		When $\alpha<\omega_1$ is a limit ordinal, we choose an enumeration $(\beta_n)_{n=0}^\infty$  of all ordinals less than $\alpha$. 
		For convenience and safety let us choose the same enumeration as the one in the definition of $T_\alpha$ or $S_\alpha$. 
		Then $D_\alpha^\kappa$ is the disjoint union of the $D_{\beta_n}^\kappa$, $n\in \omega$, except that we identify all the points $t_{\beta_n}$, resp. $b_{\beta_n}$, and denote them $t_\alpha$, resp. $b_\alpha$. The restriction of $d_{\alpha}^\kappa$ to $D_{\beta_n}^\kappa$ is $d_{\beta_n}^\kappa$. Then $d_{\alpha}^\kappa$ is extended to be the shortest path metric on $D_\alpha^\kappa$, and again $d_{\alpha}^\kappa(t_{\alpha},b_{\alpha})=1$. 
	\end{definition}
	
	\begin{remark}
		The metric spaces $(D_\alpha^\kappa,d_\alpha^\kappa)$ are countable and complete (see Proposition 2.7 in \cite{Basset2025}).
	\end{remark}
	
	\medskip
	In the spirit of the article \cite{Ostrovskii} by M.I. Ostrovskii, we shall define the notion of active pairs in $D_\alpha^\kappa$. 
	
	\begin{definition}\label{d:AP}
		We define the set of \emph{active pairs} as another graph structure on the vertices of $D_\alpha^\kappa$ as follows.
		\begin{itemize}
			\item $AP_0^\kappa$ has only one element: the pair $\{t_0,b_0\}$.
			\item $AP_1^\kappa=\set{E\subset D_1^\kappa:\cardinality{E}=2}$ (the complete graph on $D_1^\kappa$).
			\item $AP_{\alpha+1}^\kappa$ is the union of $AP_1^\kappa$, the set of active pairs in the original $D_1^\kappa$ in $D_{\alpha+1}^\kappa$, and the set of active pairs $AP_\alpha^{\kappa(j,\pm)}=AP_\alpha^\kappa$ in each of the ``$2\times \kappa$'' copies $D_\alpha^{\kappa(j,\pm)}$ of $D_\alpha^\kappa$. 
			\item Let $\alpha<\omega_1$ be a limit ordinal and $(\beta_n)_n$  be the sequence used in the construction of $D_\alpha^\kappa$. Then $AP_\alpha^\kappa$ is just $\bigcup_{n\in\omega} AP_{\beta_n}^\kappa$.
		\end{itemize}
		When there is no ambiguity, we use the shorter notation $AP_\alpha:=AP_\alpha^\kappa$.
	\end{definition}
	
	The following notion of sub-Lipschitz embedding has been called `partially bilipschitz embedding' in \cite{Ostrovskii}.
	
	\begin{definition} Let $X$ be a Banach space and $\alpha \in [0,\omega_1)$. We say that $D_\alpha^\kappa$ \emph{sub-Lipschitz embeds} into $X$ if there exist $A,B>0$ and a map $f: D_\alpha^\kappa \to X$ such that 
		\[Ad_\alpha(u,v) \le \|f(u)-f(v)\| \le Bd_\alpha(u,v),\ \ \text{for all}\ \{u,v\}\in AP_\alpha^\kappa.
		\]
		For $F\subset (0,\omega_1)$, we say that the family $(D_\alpha^\kappa)_{\alpha\in F}$ \emph{equi-sub-Lipschitz embeds} into $X$ if there exist $A,B>0$ and maps $f_\alpha: D_\alpha^\kappa \to X$, for all $\alpha\in F$, such that 
		\[Ad_\alpha(u,v) \le \|f(u)-f(v)\| \le Bd_\alpha(u,v),\ \ \text{for all}\ \{u,v\}\in AP_\alpha^\kappa.
		\]
	\end{definition}

	
	\section{Embedding dyadic diamond graphs and \texorpdfstring{\dat s}{delta-alpha-trees}}\label{s:dyadictrees}

	In this section we will be interested in diamond graphs and their active pairs with $\kappa = 2$. Given two elements $x \neq y$ of a metric space $(M,d)$ and a map $f$ defined on $M$  with values in a Banach space, it will be convenient to denote $f_{xy} \coloneqq \frac{f(x) - f(y)}{d(x,y)}$, in analogy with the notation $m_{xy}$ for molecules in Lipschitz-free spaces. 
	We start by studying the consequences of the sub-Lipschitz embeddability of a dyadic diamond graph into a Banach space for its dyadic-tree index.
	
	\begin{proposition}\label{p:DyadicDiamondToTree}
		Let $\alpha \in [0, \omega_1)$. Let $X$ be a Banach space and $f \colon \Da^2 \to X$ be a map satisfying, for some $A \in (0, 1]$ and all $u,v \in D_\alpha^2$ such that $\set{u,v}\in AP_\alpha^2$:
		\[
		A\,\da(u,v) \leq \norm{f(u)-f(v)} \leq \da(u,v).
		\]
		Then there exists an $A$-$\alpha$-tree $(x_s)_{s \in \Ta} \subset B_X$  whose root is $\ftba$. In particular, $D(X,A)\ge DT(X,A)>\alpha$.
	\end{proposition}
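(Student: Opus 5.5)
The plan is to argue by transfinite induction on $\alpha$, proving the following statement for all $\alpha$ at once. For every Banach space $X$ and every map $f\colon \Da^2\to X$ satisfying the hypothesis with constant $A$ on $AP_\alpha^2$, there is an $A$-$\alpha$-tree $(x_s)_{s\in\Ta}\subset B_X$ with $x_\varnothing=\ftba$.

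First note that $\{\ta,\ba\}\in AP_\alpha^2$ and $\da(\ta,\ba)=1$. Hence $\norm{\ftba}=\norm{f(\ta)-f(\ba)}\le 1$, so the root lies in $B_X$. The same remark applies to every candidate node, since each will be of the form $g_{tb}$ for a rescaled restriction $g$ of $f$ to a copy of some $D_\gamma^2$. The case $\alpha=0$ is trivial.

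For the limit case, $D_\alpha^2$ is the union of the $D_{\beta_n}^2$ glued at top and bottom. On each $D_{\beta_n}^2$ the metric $\da$ restricts to $d_{\beta_n}$, and $AP_{\beta_n}^2\subset AP_\alpha^2$. So $f\restricted_{D_{\beta_n}^2}$ satisfies the hypothesis with the same $A$. The induction hypothesis then gives an $A$-$\beta_n$-tree with root $f_{t_{\beta_n}b_{\beta_n}}=\ftba$. Put $x_\varnothing=x_n=\ftba$ and graft the $n$-th tree above $(n)$. By Lemma~\ref{l:dyadictrees} and Definition~\ref{def: dat}, this is an $A$-$\alpha$-tree, exactly as in the gluing done in Proposition~\ref{p:DT=DT}.

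For the successor case $\alpha=\beta+1$, write $x^0,x^1$ for the two middle vertices of the original $D_1^2$. Each satisfies $\da(\ta,x^i)=\da(x^i,\ba)=\frac12$, and $\da(x^0,x^1)=1$. We have
\[
\ftba=\tfrac12\bigl(f_{\ta x^i}+f_{x^i\ba}\bigr)\quad\text{for } i=0,1 .
\]
The key step is to choose $i$ so that the two successors $f_{\ta x^i}$ and $f_{x^i\ba}$ are far apart. Compute
\[
\tfrac12\bigl(f_{\ta x^0}-f_{x^0\ba}\bigr)-\tfrac12\bigl(f_{\ta x^1}-f_{x^1\ba}\bigr)=2\bigl(f(x^1)-f(x^0)\bigr).
\]
Since $\{x^0,x^1\}\in AP_1^2\subset AP_\alpha^2$, the right-hand side has norm at least $2A$. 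So for some $i$ we get $\frac12\norm{f_{\ta x^i}-f_{x^i\ba}}\ge A$, and the midpoint identity gives the equal-distance condition automatically.

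We then set $x_0=f_{\ta x^i}$ and $x_1=f_{x^i\ba}$. These are exactly the "root quotients" of the copies $D_\beta^{(i,+)}$ and $D_\beta^{(i,-)}$. On such a copy the metric is $\frac12 d_\beta$, so we define $g(u)=2f(u)$ on it, viewed as a map on $(D_\beta^2,d_\beta)$. Since $AP_\beta$ of the copy is contained in $AP_\alpha$, the map $g$ satisfies the hypothesis with the same $A$, and $g_{tb}=f_{\ta x^i}$ (resp. $f_{x^i\ba}$).

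The induction hypothesis provides $A$-$\beta$-trees rooted at $x_0$ and $x_1$, which we graft above $(0)$ and $(1)$ to obtain an $A$-$\alpha$-tree in $B_X$. The main point needing care is this choice of $i$; the rest is bookkeeping of the rescaling, which leaves the quotients $f_{uv}$ invariant.

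Finally, $DT(X,A)>\alpha$ follows from Definition~\ref{d:dyadic-tree-index}, and $D(X,A)>\alpha$ from Proposition~\ref{prop: link between dentability index and d-a-t}.
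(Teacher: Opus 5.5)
Your proof is correct and takes essentially the same route as the paper. You use transfinite induction, gluing the trees obtained from the restrictions $f\restricted_{D_{\beta_n}^2}$ in the limit case. In the successor case you use $\{x^0,x^1\}\in AP_\alpha^2$ to select a middle vertex $x^i$ with $\norm{f_{\ta x^i}-f_{x^i\ba}}\ge 2A$, then apply the induction hypothesis to $2f$ on $D_\alpha^{(i,+)}$ and $D_\alpha^{(i,-)}$. Your identity $\frac12(f_{\ta x^0}-f_{x^0\ba})-\frac12(f_{\ta x^1}-f_{x^1\ba})=2(f(x^1)-f(x^0))$ simply makes explicit the step the paper asserts without computation.
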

	
	\begin{proof}
		We proceed by induction on $\alpha$. The statement is clear for $\alpha = 0$ since $f_{t_0,b_0} \in B_X$, so assume it is true for all $\gamma <\alpha$.
		
		
		\underline{If $\alpha = \beta+1$:} The inequality $2\norm{\fmol{\xa{0},\xa{1}}} \geq 2A \da(\xa{0}, \xa{1}) = 2A$ implies that at least one of the following inequalities \[\norm{\fmol{\ta,\xa{0}} - \fmol{\xa{0},\ba}} \geq 2A\ \text{or}\ \norm{\fmol{\ta,\xa{1}} - \fmol{\xa{1},\ba}} \geq 2A
		\]
		holds, for example the last one. Applying the induction hypothesis to the restrictions of $2f$ on $\DapUn$ and $\DamUn$, we get that $\fmol{\ta,\xa{1}}$ and $\fmol{\xa{1},\ba}$ are respectively the roots $y_\varnothing$, $z_\varnothing$ of two $A$-$\beta$-trees $(y_s)_{s \in \Tb}$, $(z_s)_{s \in \Tb}$ included in the unit ball of $X$. We set $x_\varnothing = \ftba \in B_X$ and, for $s \in \Ta \setminus \set{\varnothing}$: if $s$ is of the form $0 \smallfrown t$ for some $t \in \Tb$, we set $x_s = y_t$. Otherwise, since $\alpha$ is a successor ordinal, $s$ is necessarily of the form $1 \smallfrown t$ for some $t \in \Tb$ and we set $x_s = z_t$. In particular, $x_0=\fmol{\ta,\xa{1}}$ and $x_1=\fmol{\xa{1},\ba}$. We thus obtain an $\alpha$-tree $(x_s)_{s \in \Ta}$. Moreover, $x_\varnothing = \frac12(x_0 + x_1)$ with $\norm{x_0 - x_1} \geq 2A$, and for $s \in \Ta \setminus \set{\varnothing}$, $x_s$ satisfies  the defining conditions of a $A$-$\alpha$-tree (see Definition \ref{def: dat}) by the hypothesis that $(y_s)_{s \in \Tb}$ and $(z_s)_{s \in \Tb}$ are $A$-$\beta$-trees. So $(x_s)_{s \in \Ta}$ is a $A$-$\alpha$-tree, and the statement is true for $\alpha = \beta + 1$.
		
		\underline{If $\alpha$ is a limit ordinal:} let $(\beta_n)_n$ be the sequence used in the definition of $D_\alpha^2$.  Applying the induction hypothesis to the restrictions of $f$ on $\Dbn^2$, we get that $\fmol{\tbn,\bbn}$ is the root $y_\varnothing^n$ of some $A$-$\beta_n$-tree $(y_s^n)_{s \in \Tbn}$ included in $B_X$. Let us set $x_\varnothing = \ftba$. Any $s \in \Ta \setminus \set{\varnothing}$ can be written $s = n \smallfrown t_n$ for some $n \in \omega$ and $t_n \in \Tbn$. For such an $s$, let us set $x_s = y_{t_n}^n$. We thus obtain an $\alpha$-tree $(x_s)_{s \in \Ta}$. The only new vertex is $x_\varnothing$ which satisfies the second defining condition of a $A$-$\alpha$-tree (see Definition \ref{def: dat}): $x_n = y_\varnothing^n = \fmol{\tbn,\bbn} = \ftba = x_\varnothing,\ \text{for all}\ n\in \omega.$
		The remaining vertices satisfy the defining conditions of a $A$-$\alpha$-tree by the hypothesis that $(y_s^n)_{s \in \Tbn}$ are $A$-$\beta_n$-trees. So $(x_s)_{s \in \Ta}$ is an $A$-$\alpha$-tree, and the statement is true for $\alpha$.
		
		The last part of the statement follows from Proposition \ref{prop: link between dentability index and d-a-t}
	\end{proof}
	
	The idea of active pairs will be crucial for obtaining a converse to the previous proposition and therefore a nonlinear characterization of the $\alpha$-tree property. 
	
	\begin{proposition}\label{p:TreeToDyadicDiamond}
		Let $\alpha \in [0,\omega_1)$ and assume that $X$ has the $\alpha$-tree property. Then there exist $f \colon \Da^2 \to X$ and $\delta \in (0, 1]$ such that for all $u,v \in D_\alpha^2$ with $\set{u,v}\in AP_\alpha^2$, we have
		\[
		\delta\,\da(u,v) \leq \norm{f(u)-f(v)} \leq \da(u,v).
		\] 
	\end{proposition}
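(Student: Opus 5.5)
The plan is to build $f$ directly from a $\delta$-$\alpha$-tree $(x_s)_{s\in\Ta}\subset B_X$, reading the tree as a prescription for the ``derivatives'' $f_{uv}$ along the edges of the diamond. We prove by transfinite induction on $\alpha$ a slightly stronger statement that is stable under the recursive construction. Given a $\delta$-$\alpha$-tree $(x_s)_{s\in\Ta}\subset B_X$, we define $f\colon\Da^2\to X$ with $f(\ba)=0$ and $f(\ta)=x_\varnothing$, so that $f_{\ta\ba}=x_\varnothing$. We then show
\[
\delta\,\da(u,v)\le \|f(u)-f(v)\|\le \da(u,v)\quad\text{for all }\{u,v\}\in AP_\alpha^2 .
\]
Since translates and dilations of trees are again trees (with rescaled $\delta$), the construction may be applied to the subtrees. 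The upper bound will come from the fact that along any geodesic edge path the increments are $\da$ times vectors of the unit ball.

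For $\alpha=0$ we put $f(t_0)=x_\varnothing$ and $f(b_0)=0$; this is immediate because $\|x_\varnothing\|\le 1$ and we may assume $\delta\le\|x_\varnothing\|$.

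For $\alpha=\beta+1$ we have $x_\varnothing=\frac12(x_0+x_1)$ with $\|x_0-x_1\|\ge 2\delta$, and $(x_{0^\smallfrown t})_{t\in\Tb}$ and $(x_{1^\smallfrown t})_{t\in\Tb}$ are $\delta$-$\beta$-trees. We first fix the original $D_1^2$: $f(\ba)=0$, $f(\ta)=x_\varnothing$, $f(\xa{0})=\frac12x_1$, $f(\xa{1})=\frac12x_0$. Thus on the path through $\xa{0}$ the lower half has derivative $x_1$ and the upper half has derivative $x_0$; on the path through $\xa{1}$ the roles are swapped. On each of the four copies of $D_\beta^2$ we then use the inductively built map for the appropriate subtree, scaled by $\frac12$ and translated to match the endpoints. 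Active pairs inside a copy are handled by the induction hypothesis, since distances scale by $\frac12$ consistently. The pairs of the original $D_1^2$ we check directly:
\begin{itemize}
\item $\{\ta,\ba\}$: the lower bound holds because $\|x_\varnothing\|\ge\delta$ should be preserved; we will need to arrange this, see below.
\item Pairs $\{\ta,\xa{i}\}$ and $\{\xa{i},\ba\}$ have derivatives $x_0$ or $x_1$, which are roots of $\delta$-$\beta$-trees.
\item $\{\xa{0},\xa{1}\}$: here $\|f(\xa{0})-f(\xa{1})\|=\frac12\|x_1-x_0\|\ge\delta=\delta\,d(\xa{0},\xa{1})$.
\end{itemize}
The upper bounds follow since all the vectors involved lie in $B_X$.

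For $\alpha$ limit, $x_n=x_\varnothing$ for all $n$, and $D_\alpha^2$ glues the $D_{\beta_n}^2$ at top and bottom. We apply the induction hypothesis to each subtree rooted at $(n)$; the resulting maps all send $\ba\mapsto0$ and $\ta\mapsto x_\varnothing$, so they glue consistently. Since $AP_\alpha=\bigcup_n AP_{\beta_n}$ and the restriction of $\da$ to $D_{\beta_n}^2$ is $d_{\beta_n}$, nothing new has to be checked.

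The main obstacle is the lower bound on root-type pairs $\{t,b\}$, which needs $\|x_\varnothing\|\ge\delta'$ at every node used as a root. This is not automatic: a tree may have root $0$. To handle it, we first replace the tree by a translate with a root of norm at least a constant. Concretely, we shift the tree by a fixed vector $e$ far from the span of the differences, or simply by $e$ with $\|e\|=1$, and then divide by $3$ so that everything stays in $B_X$. We then need a lower bound on $\|x_s\|$ at all nodes. The first thing to try is to choose $e$ via Hahn--Banach so that $e^*(x_s+e)$ stays bounded below: take $e^*$ norming $e$, with all $\|x_s\|\le1$, and use $e$ of norm $3$, dividing by $4$ afterwards. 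Then every $\|x_s+e\|\ge2$, so all root-type pairs get the lower bound $\frac12$. This yields the statement with $\delta$ replaced by $\min(\delta,2)/4$, which suffices. Along the way we verify that the scaling constants and the tree conditions survive this affine change, which is immediate since the defining conditions are translation invariant and homogeneous.
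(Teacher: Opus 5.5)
Your proposal is correct and follows essentially the same route as the paper. You normalize the tree by a translation and a dilation so that every node has norm in $[\frac12,1]$. You then build $f$ inductively with $f(b_\alpha)=0$ and $f(t_\alpha)$ equal to the root, send the middle vertices to half of the two children (crossed between the lower and upper halves), and glue the inductively obtained maps at limit stages. Two minor remarks: Hahn--Banach is not needed, since $\|x_s+e\|\ge\|e\|-\|x_s\|\ge 2$ already follows from the triangle inequality; and the induction hypothesis should be stated from the outset for trees all of whose nodes have norm at least $\frac12$.
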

	\begin{proof} Assume that $X$ has the $\alpha$-tree property. So there exists $\delta \in (0,\frac12]$ such that $B_X$ contains a $2\delta$-$\alpha$-tree. By rescaling and translating this tree, we deduce the existence of $(w_s)_{s\in T_\alpha}$ a $\delta$-$\alpha$-tree  in $B_X$ such that $\norm{w_s}\ge \frac12$, for all $s \in T_\alpha$. We now claim that there exists $f_\alpha\colon D_\alpha^2 \to X$ which satisfies:
		\begin{itemize}
			\item[a)] $\delta\, d_\alpha(u,v)\leq \norm{f(u)-f(v)}\leq d_\alpha(u,v)$ for all $\set{u,v}\in AP_\alpha^2$,
			\item[b)] $f_\alpha(t_\alpha)=w_\varnothing$,
			\item[c)] $f_\alpha(b_\alpha)=0$.
		\end{itemize}   
		
		We will prove the claim by induction.  The case $\alpha=0$ is obvious. We detail the case $\alpha=1$, as it will be used in the induction. So, it remains to define $f(x_1^i)=\frac12w_{(i)}$ for $i=0,1$. 
		We can check that 
		\begin{itemize}
			\item $\norm{f_{t_1b_1}}=\norm{f(t_1)-f(b_1)}=\norm{w_\varnothing} \in [\frac12,1]\subset [\delta,1]$.
			\item $\|f_{t_1x_1^i}\|=2\|f(t_1)-f(x_1^i)\|=\|w_{(j)}\| \in [\frac12,1]$ for $i \in \set{0,1}$ where $i+j=1$.
			\item $\|f_{x_1^ib_1}\|=2\|f(x_1^i)-f(b_1)\|=\|w_{(i)}\| \in [\frac12,1]$ for $i \in \set{0,1}$.
			\item $\|f_{x_1^0x_1^1}\|=\|f(x_1^1)-f(x_1^2)\|=\frac12\|w_{(0)}-w_{(1)}\| \in [\delta,1]$
		\end{itemize}
		This gives $\delta\, d_1(u,v)\leq \|f(u)-f(v)\|\leq d_1(u,v)$ for all $u\neq v \in D_1^2$.
		
		For $\alpha$ limit and $(\beta_n)_{n\in \omega}$ our fixed enumeration of all ordinals less than $\alpha$, the induction hypothesis yields a collection of maps $f_{\beta_n}\colon D_{\beta_n}^2\to X$ such that $f_{\beta_n}(b_{\beta_n})=0$, $f_{\beta_n}(t_{\beta_n})=w_{(n)}=w_\varnothing$ and such that each $f_{\beta_n}$ has distortion on active pairs in $AP_{\beta_n}^2$ as above, i.e. satisfies a). Then, we just define $f_\alpha\colon D_\alpha^2 \to X$ as follows: $f_\alpha(b_\alpha)=0$, $f_\alpha(t_\alpha)=w_\varnothing$, and $f_\alpha=f_{\beta_n}$ on $D_{\beta_n}^2 \setminus \{t_\alpha,b_\alpha\}$. 
		
		For $\alpha=\beta+1$, the induction hypothesis yields two maps $f_\beta^i \colon D_\beta^2 \to X$, $i \in \set{0,1}$, satisfying a) on active pairs in $AP_{\beta}^2$ and such that $f_\beta^i(b_\beta)=0$ and $f_\beta^i(t_\beta)=w_{(i)}$ for $i \in \set{0,1}$. 
		We define $f\restricted_{D_\alpha^{(i,-)}}=\frac12f_\beta^i$ for $i \in \set{0,1}$.
		Further we define $f\restricted_{D_\alpha^{(i,+)}}=\frac12(w_{(i)}+f_\beta^j)$ for $i \in \set{0,1}$ and $j$ such that $i+ j =1$. 
		Notice that, despite the double definition, $f$ is well defined on $x_\alpha^0$, $x_\alpha^1$, $b_\alpha$ and $t_\alpha$ and we have $f(b_\alpha)=0$, $f(t_\alpha)=\frac12(w_{(0)}+w_{(1)})=w_\varnothing$ and $f(x_\alpha^i)=w_{(i)}$, for $i\in \{0,1\}$. We can now compute the upper and lower estimates on active pairs.
		By the induction hypothesis and homogeneity, they will be the same as above on active pairs in $AP_{\beta}^{2(j,\pm)}$, $j\in \set{0,1}$. The computation for the case $\alpha=1$  will give us the estimates for active pairs from the original $D_1^2$ in the construction of $D_\alpha^2$. 
	\end{proof}
	
	Combining Propositions \ref{p:DyadicDiamondToTree} and \ref{p:TreeToDyadicDiamond} we can give the following metric characterization of the $\alpha$-tree property. 
	
	\begin{theorem}\label{t:AlphaTreeCharacterization}  Let $X$ be a Banach space and $\alpha \in (0,\omega_1)$. Then $X$ has the $\alpha$-tree property if and only if $D_\alpha^2$ sub-Lipschitz embeds into $X$
	\end{theorem}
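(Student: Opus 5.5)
The theorem follows by combining Proposition \ref{p:TreeToDyadicDiamond} and Proposition \ref{p:DyadicDiamondToTree}. The only extra work is a normalization of constants in one direction.

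\emph{The $\alpha$-tree property implies a sub-Lipschitz embedding.} Suppose $X$ has the $\alpha$-tree property. Proposition \ref{p:TreeToDyadicDiamond} directly gives $f\colon D_\alpha^2\to X$ and $\delta\in(0,1]$ with $\delta\, d_\alpha(u,v)\le\|f(u)-f(v)\|\le d_\alpha(u,v)$ for every active pair $\{u,v\}\in AP_\alpha^2$. Taking $A=\delta$ and $B=1$, this is exactly the definition of a sub-Lipschitz embedding of $D_\alpha^2$ into $X$.

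\emph{A sub-Lipschitz embedding implies the $\alpha$-tree property.} Suppose $f\colon D_\alpha^2\to X$ satisfies $A\,d_\alpha(u,v)\le\|f(u)-f(v)\|\le B\,d_\alpha(u,v)$ on $AP_\alpha^2$ for some $A,B>0$. We normalize by setting $g=f/B$. Then
\[
\tfrac{A}{B}\,d_\alpha(u,v)\le\|g(u)-g(v)\|\le d_\alpha(u,v)\quad\text{for all }\{u,v\}\in AP_\alpha^2 .
\]
Necessarily $A\le B$, since $AP_\alpha^2$ is nonempty (it contains $\{t_\alpha,b_\alpha\}$). So $A':=A/B\in(0,1]$, and $g$ satisfies the hypotheses of Proposition \ref{p:DyadicDiamondToTree} with constant $A'$.

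That proposition then yields an $A'$-$\alpha$-tree contained in $B_X$, rooted at $g_{t_\alpha b_\alpha}$. This is precisely the $\alpha$-tree property with $\delta=A'$.

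The only point needing care is the rescaling, namely checking $A\le B$ so that the constant is admissible in Proposition \ref{p:DyadicDiamondToTree}. The substantive work is already contained in the two cited propositions, so I expect no real obstacle here.
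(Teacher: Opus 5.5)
Your proposal is correct and follows the paper's route: the paper obtains the theorem by combining Propositions \ref{p:DyadicDiamondToTree} and \ref{p:TreeToDyadicDiamond}, exactly as you do. Your rescaling to $f/B$, together with the check that $A\le B$ because active pairs consist of distinct points, is a harmless normalization that the paper leaves implicit.
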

	
	As a immediate consequence we get the following.
	
	\begin{corollary}\label{c:AlphaTreeRigidity}
		Let $X$ and $Y$ be Banach spaces and $\alpha \in (0,\omega_1)$. Assume that $Y$ bi-Lipschitz embeds into $X$ and $X$ fails the $\alpha$-tree property. Then $Y$ fails the $\alpha$-tree property. In other words if $Y$ bi-Lipschitz embeds into $X$, then $DT(Y)\le DT(X)$.  
	\end{corollary}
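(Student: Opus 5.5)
The plan is to pass through Theorem \ref{t:AlphaTreeCharacterization} and the observation that sub-Lipschitz embeddability is stable under composition with bi-Lipschitz maps. Fix $\alpha \in (0,\omega_1)$ and assume that $Y$ has the $\alpha$-tree property; we show that $X$ does too, which is the contrapositive of the statement. By Theorem \ref{t:AlphaTreeCharacterization}, or more concretely by Proposition \ref{p:TreeToDyadicDiamond}, there are $\delta\in(0,1]$ and a map $g\colon D_\alpha^2\to Y$ with
\[
\delta\, d_\alpha(u,v)\le \|g(u)-g(v)\|\le d_\alpha(u,v)\quad\text{for all } \{u,v\}\in AP_\alpha^2 .
\]

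Next, let $\Phi\colon Y\to X$ be a bi-Lipschitz embedding, so that $a\|y-y'\|\le \|\Phi(y)-\Phi(y')\|\le b\|y-y'\|$ for some $0<a\le b$. The composition $f=\Phi\circ g\colon D_\alpha^2\to X$ then satisfies
\[
a\delta\, d_\alpha(u,v)\le \|f(u)-f(v)\|\le b\, d_\alpha(u,v)
\]
for every active pair $\{u,v\}$. Only the active-pair inequalities for $g$ are used, so no control of $g$ on non-active pairs is needed. Hence $D_\alpha^2$ sub-Lipschitz embeds into $X$.

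To conclude that $X$ has the $\alpha$-tree property, rescale: $\frac1b f$ satisfies the hypotheses of Proposition \ref{p:DyadicDiamondToTree} with $A=a\delta/b\in(0,1]$. That proposition yields an $A$-$\alpha$-tree in $B_X$, so $X$ has the $\alpha$-tree property and $DT(X,A)>\alpha$. (Equivalently, one can apply the other direction of Theorem \ref{t:AlphaTreeCharacterization}.) The statement is restricted to $\alpha>0$ only because Theorem \ref{t:AlphaTreeCharacterization} is stated for such $\alpha$; the case $\alpha=0$ is trivial anyway.

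For the index formulation, suppose $DT(Y)>\gamma$ for some ordinal $\gamma$. If $DT(Y)=\infty$, then for some $\delta$ we have $DT(Y,\delta)=\infty$, so $Y$ has the $\beta$-tree property for every $\beta<\omega_1$. In every case, $DT(Y)>\gamma$ gives some $\delta>0$ with $DT(Y,\delta)>\gamma$, meaning $B_Y$ contains a $\delta$-$\gamma$-tree, so $Y$ has the $\gamma$-tree property. By the above, $X$ has the $\gamma$-tree property, hence $DT(X)>\gamma$. Since $\gamma$ was arbitrary, $DT(Y)\le DT(X)$.

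There is no real obstacle here. The only points needing care are the bookkeeping between "$\alpha$-tree property" and "$DT>\alpha$", and noting that the constant $A$ from Proposition \ref{p:DyadicDiamondToTree} changes with $\Phi$, which is harmless because the tree property only asks for some $\delta>0$.
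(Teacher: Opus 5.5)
Your proposal is correct and follows the paper's own argument. The paper states the corollary as an immediate consequence of Theorem \ref{t:AlphaTreeCharacterization}: compose the sub-Lipschitz embedding of $D_\alpha^2$ into $Y$ with the bi-Lipschitz map $Y\to X$, then return to trees via Proposition \ref{p:DyadicDiamondToTree}. Your rescaling of the constants and your bookkeeping for $DT(Y)\le DT(X)$ simply spell out details that the paper leaves implicit.
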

	
	Corollary \ref{c:AlphaTreeRigidity} is a refinement of the following Lipschitz rigidity result, due to M.~I.~Ostrovskii \cite{Ostrovskii}.
	\begin{theorem}\label{t:ITPRigidity}
		Let $X$ and $Y$ be Banach spaces. Assume that $X$ fails the Infinite Tree Property and that $Y$ bi-Lipschitz embeds into $X$. Then $Y$ fails the Infinite Tree Property.
	\end{theorem}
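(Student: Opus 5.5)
The plan is to reduce Theorem \ref{t:ITPRigidity} to the ordinal statements already obtained, via the separable reduction and Theorem \ref{t:ITP-DT(X)}. Suppose $Y$ has the Infinite Tree Property, and let $F\colon Y\to X$ be a bi-Lipschitz embedding. We must show that $X$ has the Infinite Tree Property. Fix $\delta>0$ and an infinite $\delta$-tree $f\colon A_\infty\to B_Y$. Let $Y_0$ be the closed linear span of $f(A_\infty)$. Then $Y_0$ is a separable subspace of $Y$ with the Infinite Tree Property. Replacing $Y$ by $Y_0$, we may assume $Y$ separable. Similarly, let $X_0$ be the closed linear span of $F(Y_0)$; it is separable because $F$ is continuous and $Y_0$ is separable. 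It suffices to show that $X_0$ has the Infinite Tree Property, since trees in $B_{X_0}$ are trees in $B_X$. So we may assume both spaces are separable.

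Now apply Theorem \ref{t:ITP-DT(X)} to $Y$. It gives $DT(Y)=\infty\ge\omega_1$, hence $Y$ has the $\alpha$-tree property for every $\alpha<\omega_1$. By Theorem \ref{t:AlphaTreeCharacterization}, $D_\alpha^2$ sub-Lipschitz embeds into $Y$ for each countable $\alpha$. Composing with $F$, which only multiplies the constants $A,B$ by the Lipschitz constants of $F$ and $F^{-1}$, shows that $D_\alpha^2$ sub-Lipschitz embeds into $X$. Theorem \ref{t:AlphaTreeCharacterization} then gives the $\alpha$-tree property for $X$ for every $\alpha<\omega_1$, so $DT(X)\ge\omega_1$. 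This is just Corollary \ref{c:AlphaTreeRigidity} applied for all $\alpha$.

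The remaining step is to conclude that $DT(X)\ge\omega_1$ forces the Infinite Tree Property for the separable space $X$. This is where care is needed. Theorem \ref{t:ITP-DT(X)} says that failing the Infinite Tree Property is equivalent to $DT(X)<\omega_1$. By Definition \ref{d:dyadic-tree-index}, $DT(X)$ is a supremum over $\delta>0$ of ordinals which are either countable or $\infty$. The supremum could therefore equal $\omega_1$ without any single $DT(X,\delta)$ being infinite, so we cannot read off the property from the supremum directly.

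The way around this is to note that the supremum over $\delta$ may be taken along $\delta=1/k$, because $DT(X,\delta)$ is monotone in $\delta$. If $X$ failed the Infinite Tree Property, then each $\mathcal{DT}(X,1/k)$ would be well founded by Lemma \ref{l:ITP_illfounded}. By Lemma \ref{l:WFomega_1} and Proposition \ref{p:DT=DT}, each $DT(X,1/k)$ would then be countable. A countable supremum of countable ordinals is countable, so $DT(X)<\omega_1$, a contradiction. This is precisely the content of the proof of Theorem \ref{t:ITP-DT(X)}, so that theorem applies directly. Hence $X$ has the Infinite Tree Property, which is the contrapositive of the statement.

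The main obstacle I anticipate is exactly this uniformity issue: the passage through all countable $\alpha$ loses control of $\delta$, and it is recovered only by the descriptive-set-theoretic boundedness argument (Lemma \ref{l:WFomega_1}) together with the separable reduction.
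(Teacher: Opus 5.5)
Your proof is correct and is essentially the paper's argument run contrapositively. It uses the same separable reduction, then Theorem~\ref{t:ITP-DT(X)} combined with Corollary~\ref{c:AlphaTreeRigidity}, which you unpack via Theorem~\ref{t:AlphaTreeCharacterization} and composition with the bi-Lipschitz map. Your extra observation that $DT(X,\delta)$ is monotone in $\delta$, so the supremum may be taken along $\delta=1/k$, makes explicit a step that the paper's proof of Theorem~\ref{t:ITP-DT(X)} leaves implicit.
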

	
	We include a proof using our tools.
	\begin{proof} Obviously, a Banach space with the Infinite Tree Property contains a separable subspace (spanned by the infinite tree) with the Infinite Tree Property. So we may assume that $Y$ is separable. Replacing $X$ by the closed linear span of the range of the Lipschitz embedding, we may also assume that $X$ is separable. Since $X$ fails the Infinite Tree Property, we have, by Theorem \ref{t:ITP-DT(X)}, that $DT(X)<\omega_1$. So, by Corollary \ref{c:AlphaTreeRigidity}, $DT(Y)<\omega_1$, and applying the other implication in Theorem \ref{t:ITP-DT(X)}, we get that $Y$ fails the Infinite Tree Property.
	\end{proof}
	
	\begin{remark} Let us mention that for separable Banach spaces $X$ and $Y$ so that $Y$ bi-Lipschitz embeds into $X$, the fact that $D(Y)\le D(X)$ follows from classical differentiablity results. Indeed, either $X$ has the RNP so the bi-Lipschitz embedding admits a point of Gateaux differentiability and the Gateaux derivative induces a linear embedding of $Y$ into $X$, or $X$ fails the RNP and the inequality is trivial. The fact that the RNP is stable under Lipschitz embeddings is also a classical consequence of the separable determination of RNP and the above differentiablity argument. It is important to note that, due to the existence of the Bourgain-Rosenthal space $X_{BR}$ failing the RNP and the Infinite Tree Property, Corollary \ref{c:AlphaTreeRigidity} and Theorem \ref{t:ITPRigidity} cannot be obtained by a differentiability argument. 
	\end{remark}
	
	\begin{question}\label{q:embeddingbushes} We do not know a metric invariant, in the spirit of Theorem \ref{t:AlphaTreeCharacterization}, that characterizes the condition $D(X) \le \alpha$. In view of Section \ref{s:bushes}, a direction that one could pursue may be to define ad hoc weighted and finitely branching diamond graphs  of ordinal heights and study their sub-Lipschitz embeddings into Banach spaces. 
	\end{question}
	
	\begin{question} Let us now complete Remark \ref{r:FTP1}. Johnson and Schechtman proved in  \cite{JohnsonSchechtman} that a Banach space $X$ is not super-reflexive if and only if the family $(D_n^2)_{n\in \omega}$ equi-Lipschitz embeds into $X$. It follows from our results that it is also equivalent to the fact that the family $(D_n^2)_{n\in \omega}$ equi-sub-Lipschitz embeds into $X$ and to the fact that $D_\omega^2$ sub-Lipschitz embeds into $X$. We do not know whether $D_\omega^2$ bi-Lipschitz embeds into every non super-reflexive Banach space. 
	\end{question}

	
	\section{Embedding countably branching diamond graphs, \texorpdfstring{\dast s}{sprawling trees} and convex fragmentability index}\label{s:embeddingCountableDiamond}
	
	This section is entirely devoted to the study of the bi-Lipschitz and sub-Lipschitz embeddability of the countably branching diamond graphs $D_\alpha^\omega$ into Banach spaces. 
	
	\subsection{Szlenk index and countably branching diamond graphs} 
	
	The next proposition is later generalized to possibly non-dual spaces $X$ in Proposition~\ref{p:CountablyBranchingSprawling} where we get in fact the stronger conclusion $C(X)>\alpha$. However, we wish to mention this simpler argument based on weak$^*$ compactness.

	\begin{proposition}\label{p:CountablyBranchingConvexSzlenk}
		Let $Y$ be a Banach space, $A\in (0,1]$, $\alpha \in [0,\omega_1)$, and $f:D^\omega_\alpha \to X=Y^*$ be a map such that 
		for all $\set{u,v} \in AP^\omega_\alpha$,
		$$A \da(u,v)\le \|f(u)-f(v)\|\le \da(u,v).$$
		Then $\ftba \in c_{2A}^\alpha(B_{Y^*})$, where $c_\varepsilon$ denotes the convex-Szlenk derivation. In particular $Sz(Y)=Cz(Y)>\alpha$. 
	\end{proposition}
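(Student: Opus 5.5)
We argue by transfinite induction on $\alpha$, mirroring the proof of Proposition~\ref{p:DyadicDiamondToTree} but with the convex-Szlenk derivation in place of trees. Set $\eps=2A$. The induction hypothesis we carry is the statement itself: for every $\gamma<\alpha$ and every map $g:D^\omega_\gamma\to Y^*$ satisfying the two-sided estimate on $AP^\omega_\gamma$ with constant $A$, we have $g_{t_\gamma b_\gamma}\in c_{2A}^\gamma(B_{Y^*})$. For $\alpha=0$ this is just $\|f_{t_0b_0}\|\le 1$.

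\emph{Limit case.} Let $(\beta_n)$ be the enumeration used to build $D^\omega_\alpha$. Since $t_\alpha=t_{\beta_n}$ and $b_\alpha=b_{\beta_n}$, and $AP_{\beta_n}\subset AP_\alpha$ with $d_\alpha$ restricting to $d_{\beta_n}$, the restriction of $f$ to $D^\omega_{\beta_n}$ satisfies the hypothesis. Hence $\ftba\in c_{2A}^{\beta_n}(B_{Y^*})$ for every $n$, so $\ftba\in\bigcap_n c^{\beta_n}_{2A}(B_{Y^*})=c^\alpha_{2A}(B_{Y^*})$.

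\emph{Successor case $\alpha=\beta+1$.} On each copy $D^{(i,\pm)}_\beta$ the metric is $\frac12 d_\beta$, so $2f$ restricted to that copy satisfies the hypothesis at level $\beta$. Its ``molecule'' between the top and bottom of the copy is $2(f(\cdot)-f(\cdot))/(2\cdot\frac12)$, which is exactly $f_{t_\alpha x_\alpha^i}$, respectively $f_{x_\alpha^i b_\alpha}$, computed in $d_\alpha$. The induction hypothesis then gives $y_i:=f_{t_\alpha x^i_\alpha}$ and $z_i:=f_{x^i_\alpha b_\alpha}$ in $K:=c^\beta_{2A}(B_{Y^*})$, which is weak$^*$-compact and convex. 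Moreover $\ftba=\frac12(y_i+z_i)$ for every $i$, because $d(t,x^i)=d(x^i,b)=\frac12$.

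It remains to show that $\ftba\in s^1_{2A}(K)$; then $\ftba\in c^{\alpha}_{2A}(B_{Y^*})$. Let $V$ be a weak$^*$ neighbourhood of $\ftba$. Since the $y_i,z_i$ are bounded, we argue as in Lemma~\ref{l:LLT}, using weak$^*$ functionals (elements of $Y$) instead of $X^*$; the proof is identical. This gives $i\neq j$ with $\frac12(y_i+z_j)$ and $\frac12(y_j+z_i)$ both in $V$, and both lie in $K$ by convexity. Their distance is
\[
\tfrac12\|y_i-y_j+z_j-z_i\|=\|f(x^j_\alpha)-f(x^i_\alpha)\|\cdot 2 .
\]
Here we use $y_i-z_i = 2(f(t)+f(b))-4f(x^i)$, so the difference equals $2(f(x^j)-f(x^i))$ up to sign. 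Since $\{x^i_\alpha,x^j_\alpha\}\in AP_1\subset AP_\alpha$ and $d_\alpha(x^i,x^j)=1$, this distance is at least $2A$. Checking the constant carefully is the one delicate point: we need $\diam(V\cap K)\ge 2A$ and not merely $A$ so that the derivation parameter matches $\eps=2A$. If the arithmetic only yields $A$, the statement has to be read with the convention that points with $\diam\ge\eps$ survive, and we adjust accordingly. Either way $\diam(V\cap K)\ge 2A$ for every $V$, so $\ftba\in s^1_{2A}(K)\subset c^1_{2A}(K)$.

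Finally, since $c^\alpha_{2A}(B_{Y^*})\ne\emptyset$ we get $Cz(Y)>\alpha$. Combined with $Sz(Y)=Cz(Y)$ (Theorem~\ref{t:LPR}, after reducing to the separable case, or more simply using $s\subset c$ so that $Sz\le Cz$ together with that result), this gives the "in particular". The main obstacle I expect is the bookkeeping of the rescaling on the copies $D^{(i,\pm)}_\beta$ together with the exact separation constant in the successor step.
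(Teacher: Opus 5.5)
Your proof is correct, but the successor step takes a different route from the paper's.

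The paper first reduces to separable $Y$, so that bounded subsets of $Y^*$ are weak$^*$ metrizable. It then extracts weak$^*$-convergent subsequences $\fmol{\ta,\xa{i}}\to u$ and $\fmol{\xa{i},\ba}\to v$ in $K=c^\beta_{2A}(B_{Y^*})$. It notes that $u,v$ are weak$^*$ limits of $2A$-separated sequences in $K$, so they lie in $s^1_{2A}(K)$. Only at the end does it use the convex hull: $\ftba=\frac12(u+v)\in c^1_{2A}(K)$.

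You instead run the Lemma~\ref{l:LLT} argument in the weak$^*$ topology with the cross averages $\frac12(y_i+z_j)$, which lie in $K$ by convexity. This shows that the root itself survives, $\ftba\in s^1_{2A}(K)$. Your route uses only boundedness (no weak$^*$ compactness, no separability) and gives a slightly sharper conclusion at each step. It is exactly the mechanism the paper uses later, in Propositions~\ref{p:C(X)_ST(X)} and~\ref{p:CountablyBranchingSprawling}, to get the stronger conclusion $C(X)>\alpha$. The paper's proof is the more classical weak$^*$-compactness argument, which it deliberately presents as the simpler one.

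Your constant is right. The identity $\frac12(y_i+z_j)-\frac12(y_j+z_i)=2(f(\xa{j})-f(\xa{i}))$, together with $d_\alpha(\xa{i},\xa{j})=1$, gives exactly $2A$; this is the same computation as the paper's $\|\fmol{\xa{i},\ba}-\fmol{\xa{j},\ba}\|\ge 2A$. You can drop the hedge about conventions.

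One small correction on the ``in particular''. The inclusion $s^\alpha_\eps\subset c^\alpha_\eps$ only gives $Sz(Y)\le Cz(Y)$, which is the wrong direction for deducing $Sz(Y)>\alpha$. You need the reduction the paper makes:
\begin{enumerate}
\item Take a separable $Z\subset Y$ norming the separable span of $f(D^\omega_\alpha)$.
\item Apply your argument to $f$ composed with the restriction $Y^*\to Z^*$, which is isometric on that span. This gives $Cz(Z)>\alpha$.
\item Conclude with $Sz(Z)=Cz(Z)$ from Theorem~\ref{t:LPR} and the monotonicity $Sz(Y)\ge Sz(Z)$.
\end{enumerate}
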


	\begin{proof} Since $D^\omega_\alpha$ is countable, the closed linear span $E$ of $f(D^\omega_\alpha)$ is separable and normed by a separable subspace $Z$ of $Y$. It follows that $E$ isometrically embeds into $Z^*$. Therefore, we may and do assume that $Y$ is separable. 
		
		We prove the statement by induction. It is clearly true for $\alpha=0$, as $\ftba \in B_X$.  So assume it is true for all $\beta <\alpha$. The property passes clearly to limit ordinals, so assume that $\alpha=\beta+1$. By induction hypothesis, we get that for all $i\in \omega$, $\fmol{\ta,\xa{i}}, \fmol{\xa{i},\ba} \in c_{2A}^\beta(B_{Y^*})$. By weak$^*$ compactness and passing to subsequences we may assume that $(\fmol{\ta,\xa{i}})_i$ and $(\fmol{\xa{i},\ba})_i$ are  weak$^*$ converging to $u$ and $v$ in $c_{2A}^\beta(B_{Y^*})$ respectively. So $f_{t_\alpha b_\alpha}=\frac12(\fmol{\ta,\xa{i}}+\fmol{\xa{i},\ba})=\frac12(u+v)$. 
		For all $i\neq j$, 
		$\set{\xa{i},\xa{j}}\in AP^\omega_\alpha$ and so
		$\|\fmol{\xa{i},\ba}-\fmol{\xa{j},\ba}\|=\|\fmol{\ta,\xa{i}} - \fmol{\ta,\xa{j}}\|\ge 2A$. 
		So $u,v \in s_{2A}(c_{2A}^\beta(B_{Y^*}))$ and $f_{t_\alpha b_\alpha}=\frac12(u+v)\in c_{2A}(c_{2A}^\beta(B_{Y^*}))=c_{2A}^\alpha(B_{Y^*})$. Thus $Cz(Y)>\alpha$. We recall that Theorem \ref{t:LPR} ensures that $Sz(Y)=Cz(Y)$.
	\end{proof}
	
	As a byproduct we can compare the embeddability of $D_n^\omega$ and $T_n^\omega$ or $T_\infty^\omega$, where $T_n^\omega$ is the countably branching tree of height $n\in \omega$ equipped with the hyperbolic distance (i.e its natural graph metric), and $T_\infty^\omega$ is the infinite countably branching hyperbolic tree. We refer to \cite{BaudierKaltonLancien} for precise definitions.
	\begin{corollary}\label{c:HyperTrees} Let $X$ be a Banach space. Assume that the family $(D_n^\omega)_{n\in \omega}$ equi-sub-Lipschitz embeds into $X^{**}$. Then $T_\infty^\omega$ bi-Lipschitz embeds into $X$.
	\end{corollary}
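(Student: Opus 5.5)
The plan is to chain Proposition~\ref{p:CountablyBranchingConvexSzlenk} with the known characterization of bi-Lipschitz embeddability of $T_\infty^\omega$ in terms of the Szlenk index. The relevant result, from \cite{BaudierKaltonLancien}, is: \emph{if $X$ is a Banach space with $Sz(X)>\omega$ (in the separable setting: if $X$ is not isomorphic to a space admitting an equivalent asymptotically uniformly smooth norm), then $T_\infty^\omega$ bi-Lipschitz embeds into $X$.} So the whole argument reduces to proving $Sz(X)>\omega$, i.e.\ $Sz(X,\eps)$ infinite for some fixed $\eps>0$.

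First normalize the maps. The hypothesis gives $A,B>0$ and maps $f_n\colon D_n^\omega\to X^{**}$ with $A d_n(u,v)\le\|f_n(u)-f_n(v)\|\le Bd_n(u,v)$ on $AP_n^\omega$. Dividing by $B$, each $f_n/B$ satisfies the hypothesis of Proposition~\ref{p:CountablyBranchingConvexSzlenk} with constant $A/B\in(0,1]$ and with $Y=X^*$. That proposition yields $c_{2A/B}^n(B_{X^{**}})\neq\emptyset$, and this holds for every $n\in\omega$. Hence $Cz(X^*,2A/B)>n$ for all $n$, so $Cz(X^*,2A/B)\ge\omega$ and thus $Cz(X^*)>\omega$. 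By $Sz\ge Cz$ (the Szlenk derivation is contained in the convex one), or by Theorem~\ref{t:LPR} after the separable reduction done inside the proposition, we obtain $Sz(X^*)>\omega$. Note that the $\varepsilon$ is uniform in $n$; this uniformity is exactly what equi-sub-Lipschitz embeddability provides.

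The main obstacle is transferring this from $X^*$ back to $X$. The conclusion $Sz(X^*)>\omega$ concerns weak$^*$ fragmentation of $B_{X^{**}}$ relative to $X^*$, and it is precisely the statement that $X^*$ admits no equivalent AUS norm. The step I would try is to use the duality between asymptotic smoothness and weak$^*$ asymptotic convexity. If $X$ had $Sz(X)\le\omega$, then by Knaust--Odell--Schlumprecht / Raja $X$ would admit an equivalent AUS norm, hence $X^*$ a weak$^*$-AUC norm. One then needs to derive a contradiction with the $2A/B$-sprawling configuration built in $B_{X^{**}}$. My first attempt would be to transport the finite configurations from $X^{**}$ down to $X$ via local reflexivity. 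Since $D_n^\omega$ is infinite, I would apply local reflexivity to finite subgraphs (finitely many branches at each level, which suffices for the Szlenk-type estimates of height $n$). This gives, in $X$ itself, countably-branching weakly null tree configurations of height $n$ with separation bounded below. Such configurations are exactly the witnesses that the norm of $X$ is not AUS with power type, and in the form used in \cite{BaudierKaltonLancien} they give $Sz(X)>\omega$.

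If this local-reflexivity transfer turns out to be delicate (weak null branching does not pass through finite-dimensional perturbations directly), the fallback is the tree-embedding argument itself. Following \cite{BaudierKaltonLancien}, I would directly build bi-Lipschitz copies of $T_n^\omega$ with uniform distortion in $X$ from weakly separated countably branching trees obtained by the successive-difference structure $f_{t,x^i}-f_{t,x^j}$. I would then invoke their barycentric gluing to pass from the uniform family $(T_n^\omega)_n$ to $T_\infty^\omega$.
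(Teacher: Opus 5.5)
Your first step matches the paper, but the reduction ``the whole argument reduces to proving $Sz(X)>\omega$'' is a genuine gap, because that claim is false under the hypothesis. Take $X=c_0$. Then $X^{**}=\ell_\infty$ contains an isometric copy of every countable metric space, in particular of every $D_n^\omega$, yet $Sz(c_0)=\omega$. (The conclusion of the corollary still holds for $c_0$, by Aharoni's theorem \cite{Aharoni}, but not because $Sz(c_0)>\omega$.) So no argument through AUS renormings of $X$, local reflexivity, or anything else can produce the contradiction you are looking for. The configurations in $B_{X^{**}}$ obstruct fragmentation for the $\sigma(X^{**},X^*)$-topology and witness $Sz(X^*)>\omega$, i.e.\ that $X^*$ has no equivalent AUS norm. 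This is compatible with $X$ itself being AUS. Likewise, the trees you would pull down to $X$ come from weak$^*$-null trees with $\eps$-separated terms and bounded (signed) branch sums. That is a $c_0$-type estimate, already realized by the unit vector basis of $c_0$, so they do not witness $Sz(X)>\omega$.

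The missing ingredient is the other half of the Baudier--Kalton--Lancien result: $Sz(X^*)>\omega$ by itself implies that $T_\infty^\omega$ bi-Lipschitz embeds into $X$ (Theorem 2.6 in \cite{BaudierKaltonLancien}). The paper's proof is your first step followed directly by this citation. The passage from $X^{**}$ down to $X$ that you found delicate is handled inside that theorem, and your ``fallback'' is only a sketch of reproving it.

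Two details in your first step also need repair.
\begin{enumerate}
\item From $c_\eps^n(B_{X^{**}})\neq\emptyset$ for all $n$ you only get $Cz(X^*,\eps)\ge\omega$. To reach $Cz(X^*)>\omega$ you need weak$^*$ compactness: the $c_\eps^n(B_{X^{**}})$ form a decreasing sequence of nonempty weak$^*$-compact sets, so $c_\eps^\omega(B_{X^{**}})\neq\emptyset$.
\item Your inequality ``$Sz\ge Cz$'' is backwards: $s_\eps^\alpha\subset c_\eps^\alpha$ gives $Sz\le Cz$. Only your second justification works, namely the equality $Sz=Cz$ of Theorem~\ref{t:LPR}, as in the paper.
\end{enumerate}
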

	
	\begin{proof} By Proposition \ref{p:CountablyBranchingConvexSzlenk}, there exists $\eps >0$ so that  $c_{\eps}^n(B_{X^{**}})\neq \emptyset$, for all $n\in \omega$. Then, using weak$^*$ compactness, we get that $c_{\eps}^\omega(B_{X^{**}})\neq \emptyset$. Thus $Sz(X^*)=Cz(X^*)>\omega$. It now follows from Theorem 2.6 in \cite{BaudierKaltonLancien} that $T_\infty^\omega$ bi-Lipschitz embeds into $X$. 
	\end{proof}
	
	\begin{remark} This corollary contrasts with Theorem 2.7 in \cite{LNOO2018} which asserts that the family $(T_n^\omega)_{n \in \N}$ does not equi-bi-Lipschitz embed into the family $(D_n^\omega)_{n\in \N}$. 
		
		On the other hand it is easy to see that $T_\infty^\omega$ 
		isometrically embeds into $\ell_1$, while the family $(D_n^\omega)_{n\in \omega}$ does not equi-sub-Lipschitz embed into $\ell_1$. The latter follows, for instance, from Proposition \ref{p:CountablyBranchingConvexSzlenk} and the fact that $\ell_1$ is isometric to $c_0^*$ while $Sz(c_0)=\omega$. 
		
		Note also that the assumption that $(D_n^\omega)_{n\in \omega}$ equi-sub-Lipschitz embeds into $X^{**}$ is strictly weaker than $(D_n^\omega)_{n\in \omega}$ equi-sub-Lipschitz embedding into $X$. Indeed, $(D_n^\omega)_{n\in \omega}$ does not equi-sub-Lipschitz embed into $\ell_1$, but equi-bi-Lipschitz embed into $L_1$ (see \cite{7} or Section \ref{Diamonds_L1}), which is isomorphic to a subspace of $\ell_1^{**}$ (remember that $\ell_\infty$ is isomorphic to $L_\infty$).
	\end{remark}
	

	\subsection{Sprawling trees and embeddings of countably branching diamond graphs}
	
	Recall that the active pairs in $D_\alpha^\omega$ are defined in Definition~\ref{d:AP}.
	The following is an analogue of Proposition~\ref{p:DyadicDiamondToTree}.
	
	\begin{proposition}\label{p:CountablyBranchingSprawling}
		Let $X$ be a Banach space, $\alpha\in [0,\omega_1)$, $A\in (0,1]$ and $f:D^\omega_\alpha \to X$ be a map such that 
		for all $\set{u,v} \in AP_\alpha^\omega$:
		$$A\da(u,v)\le \|f(u)-f(v)\|\le \da(u,v).$$
		Then $B_X$ contains a bounded $2A$-$\alpha$-sprawling tree whose root is $f_{t_\alpha b_\alpha}$. In particular $C(X)\ge ST(X)>\alpha$. 
	\end{proposition}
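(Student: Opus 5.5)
We argue by transfinite induction on $\alpha$, mimicking the proof of Proposition~\ref{p:DyadicDiamondToTree}. The inductive statement is: for every map $f$ as in the statement, there is a $2A$-$\alpha$-sprawling tree $(x_s)_{s\in S_\alpha}\subset B_X$ with root $x_\varnothing=f_{t_\alpha b_\alpha}$. The case $\alpha=0$ is immediate, since $\|f_{t_0b_0}\|\le 1$ by the upper estimate on the active pair $\{t_0,b_0\}$.

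\textbf{Limit step.} For $\alpha$ a limit ordinal we restrict $f$ to each copy $D^\omega_{\beta_n}$. The active pairs of $D^\omega_{\beta_n}$ are active pairs of $D^\omega_\alpha$, and $d_\alpha$ restricts to $d_{\beta_n}$, so the hypothesis passes to the restriction. The induction hypothesis then gives $2A$-$\beta_n$-sprawling trees $(y^n_t)_{t\in S_{\beta_n}}$ with roots $f_{t_{\beta_n}b_{\beta_n}}=f_{t_\alpha b_\alpha}$. We set $x_\varnothing=f_{t_\alpha b_\alpha}$ and $x_{(0,n)^\smallfrown t}=y^n_t$. The root has type (C) and $x_{(0,n)}=x_\varnothing$, as required.

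\textbf{Successor step.} Let $\alpha=\beta+1$. For each $i\in\omega$ consider the copies $D_\beta^{(i,+)}$ and $D_\beta^{(i,-)}$, on which $d_\alpha=\frac12 d_\beta$.
\begin{itemize}
\item On $D_\beta^{(i,+)}$, identified with $D_\beta$ so that $t_\beta\mapsto t_\alpha$ and $b_\beta\mapsto x_\alpha^i$, the map $2f$ satisfies the hypothesis with respect to $d_\beta$. Its root molecule is $2(f(t_\alpha)-f(x^i_\alpha))/d_\beta(t_\beta,b_\beta)=f_{t_\alpha x_\alpha^i}$.
\item Likewise $2f$ on $D_\beta^{(i,-)}$ has root molecule $f_{x_\alpha^i b_\alpha}$.
\end{itemize}
Hence the induction hypothesis yields $2A$-$\beta$-sprawling trees $(y^i_t)$ rooted at $f_{t_\alpha x_\alpha^i}$ and $(z^i_t)$ rooted at $f_{x_\alpha^i b_\alpha}$, all contained in $B_X$.

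We define $x_\varnothing=f_{t_\alpha b_\alpha}$, $x_{(0,i)^\smallfrown t}=y^i_t$ and $x_{(1,i)^\smallfrown t}=z^i_t$. Since $d_\alpha(t_\alpha,x^i_\alpha)=d_\alpha(x^i_\alpha,b_\alpha)=\frac12$, we have
$$\tfrac12\bigl(f_{t_\alpha x^i_\alpha}+f_{x^i_\alpha b_\alpha}\bigr)=f(t_\alpha)-f(b_\alpha)=f_{t_\alpha b_\alpha},$$
which is the averaging condition at a node of type (B). For the separation condition, $\{x^i_\alpha,x^j_\alpha\}$ is an active pair of the original $D_1^\omega$ with $d_\alpha(x^i_\alpha,x^j_\alpha)=1$. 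Therefore
$$\|x_{(0,i)}-x_{(0,j)}\|=2\|f(x^j_\alpha)-f(x^i_\alpha)\|\ge 2A \quad\text{for } i\ne j.$$
All other nodes inherit the required conditions from the subtrees. This completes the induction.

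\textbf{Conclusion and main obstacle.} The tree lies in $B_X$, so $ST(X,2A)>\alpha$, and Proposition~\ref{p:C(X)_ST(X)} gives $C(X)\ge C(X,2A)>\alpha$. The only delicate point is the bookkeeping of rescalings: one must check that $2f$ on each copy satisfies exactly the same two-sided bounds with respect to $d_\beta$, with the same constants, so that the inductive hypothesis applies without loss. One must also check that the active pairs of each copy are included among those of $D_\alpha^\omega$, which holds by Definition~\ref{d:AP}.
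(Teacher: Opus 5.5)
Your proof is correct and follows the paper's argument. Both proceed by induction on $\alpha$: the successor step applies the induction hypothesis to $2f$ on the $2\times\omega$ rescaled copies and gets the $2A$-separation from the active pairs $\{x^i_\alpha,x^j_\alpha\}$ at distance $1$, and the limit step is immediate. The only cosmetic difference is that you place the top subtrees at $(0,i)$ and the bottom ones at $(1,i)$ while the paper does the reverse, which is harmless since $\|f_{t_\alpha x^i_\alpha}-f_{t_\alpha x^j_\alpha}\|=\|f_{x^i_\alpha b_\alpha}-f_{x^j_\alpha b_\alpha}\|$.
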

	
	\begin{proof}
		We proceed by induction on $\alpha$. For $\alpha=0$ there is nothing to prove, as $f_{t_\alpha b_\alpha}\in B_X$. Now let us assume that the claim is true for every $\gamma<\alpha$. 
		
		The case when $\alpha$ is a limit ordinal is clear, so let us assume that  $\alpha=\beta+1$. We  have that $f_{t_\alpha b_\alpha}=\frac12(f_{t_\alpha x^i_\alpha}+f_{x^i_\alpha b_\alpha})$ for all $i\in \omega$. Recall that $D_\alpha^{(i,\pm)}$ denotes the corresponding scaled-down copies of $D_\beta^\omega$ in $D_\alpha^\omega$.
		By the induction hypothesis applied to the restriction of $2f$ to $D_\alpha^{(i,+)}$ and to $D_\alpha^{(i,-)}$ we obtain ``$2\times \omega$''-many $2A$-$\beta$-sprawling trees $(y_s^{(i,\pm)})_{s\in S_\beta}$ in $B_X$ with roots $f_{t_\alpha x^i_\alpha}$ and $f_{x^i_\alpha b_\alpha}$. 
		Moreover, for every $i\neq j \in \omega$, we have $\|f_{t_\alpha x^i_\alpha}-f_{t_\alpha x^j_\alpha}\|\geq 2A$. 
		Therefore, if we define $x_\varnothing=f_{t_\alpha b_\alpha}$, $x_{(0,i)^\smallfrown s}=y_s^{(i,-)}$ and $x_{(1,i)^\smallfrown s}=y_s^{(i,+)}$ for all $i\in \omega$ and  $s \in S_\beta$, we obtain the desired $2A$-$\alpha$-sprawling tree. 
		
		The last part of the statement follows from Proposition \ref{p:C(X)_ST(X)}.
	\end{proof}

	We now turn to the converse. The proof  will be almost identical to the proof of Proposition \ref{p:TreeToDyadicDiamond}. 
	
	\begin{proposition}\label{p:subLipEmbeddingOmegaDiamond}
		Let $X$ be a Banach space and $\alpha \in[0,\omega_1)$. Assume that $X$ has the $\alpha$-sprawling tree property. Then there exist $f \colon D^\omega_\alpha \to X$ and $A \in (0, 1]$ such that for all $u,v \in D^\omega_\alpha$ with $\set{u,v}\in AP_\alpha^\omega$, we have
		\[
		A\,\da(u,v) \leq \norm{f(u)-f(v)} \leq \da(u,v).
		\] 
	\end{proposition}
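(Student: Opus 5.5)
The plan is to follow the scheme of the proof of Proposition \ref{p:TreeToDyadicDiamond} almost verbatim, building $f$ by transfinite induction along the recursive structure shared by $S_\alpha$ and $D_\alpha^\omega$.

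\textbf{Normalization.} Let $(v_s)_{s\in S_\alpha}$ be an $\eta$-$\alpha$-sprawling tree in $B_X$. Fix $z\in X$ with $\|z\|=\frac34$ and put $w_s=\frac14 v_s+z$. This map is affine, so the defining relations $w_s=\frac12(w_{s^\smallfrown(0,n)}+w_{s^\smallfrown(1,n)})$ and $w_s=w_{s^\smallfrown(0,n)}$ are preserved. We get a $\delta$-$\alpha$-sprawling tree with $\delta=\eta/4\le \frac12$, and $\frac12\le\|w_s\|\le1$ for all $s$. This is where the separation from $0$ comes from, which is needed because the bottom vertex is sent to $0$.

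\textbf{Induction claim.} For every $\beta\le\alpha$ and every $\delta$-$\beta$-sprawling tree $(w_s)_{s\in S_\beta}$ in $B_X$ with all $\|w_s\|\ge\frac12$, there is $g\colon D_\beta^\omega\to X$ such that:
\begin{itemize}
\item[a)] $\frac\delta2\, d_\beta(u,v)\le\|g(u)-g(v)\|\le d_\beta(u,v)$ for all $\{u,v\}\in AP_\beta^\omega$;
\item[b)] $g(t_\beta)=w_\varnothing$;
\item[c)] $g(b_\beta)=0$.
\end{itemize}
The proposition then follows with $A=\delta/2$.

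\textbf{Base case $\beta=1$.} Set $g(x_1^i)=\frac12 w_{(0,i)}$ for all $i$. Then the normalized differences on $D_1^\omega$ are
\begin{itemize}
\item $2(g(t_1)-g(x_1^i))=2w_\varnothing-w_{(0,i)}=w_{(1,i)}$ and $2g(x_1^i)=w_{(0,i)}$, both of norm in $[\frac12,1]$;
\item $g(t_1)-g(b_1)=w_\varnothing$, of norm in $[\frac12,1]$;
\item for $i\ne j$ we have $d_1(x_1^i,x_1^j)=1$ and $\|g(x_1^i)-g(x_1^j)\|=\frac12\|w_{(0,i)}-w_{(0,j)}\|\in[\frac\delta2,1]$.
\end{itemize}
So a) holds for every pair of $D_1^\omega$, since $\delta\le\frac12$ gives $\frac\delta2\le\frac12$.

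\textbf{Limit case.} Let $\beta$ be a limit ordinal. For each $n$, the subtree $(w_{(0,n)^\smallfrown s})_{s\in S_{\beta_n}}$ has root $w_{(0,n)}=w_\varnothing$. The induction hypothesis gives maps $g_n$ on $D_{\beta_n}^\omega$ with top value $w_\varnothing$ and bottom value $0$. Glue them along the identified top and bottom vertices; the result is well defined. Active pairs of $D_\beta^\omega$ lie in a single copy, and the metric restricted to $D_{\beta_n}^\omega$ is $d_{\beta_n}$, so a) is inherited.

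\textbf{Successor case $\beta=\gamma+1$.} For each $i$, the subtrees rooted at $(0,i)$ and $(1,i)$ are $\delta$-$\gamma$-sprawling trees. The induction hypothesis gives $g^{(0,i)},g^{(1,i)}\colon D_\gamma^\omega\to X$ with tops $w_{(0,i)},w_{(1,i)}$ and bottom $0$. Define
\[
g\restricted_{D_\beta^{(i,-)}}=\tfrac12 g^{(0,i)},\qquad g\restricted_{D_\beta^{(i,+)}}=\tfrac12\bigl(w_{(0,i)}+g^{(1,i)}\bigr).
\]
At each shared vertex the definitions must agree:
\begin{itemize}
\item $g(b_\beta)=0$ from every copy;
\item $g(x_\beta^i)=\frac12 w_{(0,i)}$ from both $D_\beta^{(i,-)}$ and $D_\beta^{(i,+)}$;
\item $g(t_\beta)=\frac12(w_{(0,i)}+w_{(1,i)})=w_\varnothing$, independently of $i$. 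This is exactly where the sprawling-tree relation is used.
\end{itemize}

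\textbf{Estimates in the successor case.} Active pairs inside a copy $D_\beta^{(i,\pm)}$ satisfy a) because $d_\beta=\frac12 d_\gamma$ there and $g$ is $\frac12 g^{(\cdot,i)}$ up to a translation. Active pairs of the original $D_1^\omega$ involve only $t_\beta,b_\beta,x_\beta^i$. These take the same values as in the base case, so the base-case computation applies verbatim.

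\textbf{Main obstacle.} I expect the only real difficulty to be checking the well-definedness at the infinitely many shared vertices, together with the new active pairs $\{x^i,x^j\}$ of the original $D_1^\omega$. The first is settled by the identity $w_\varnothing=\frac12(w_{(0,i)}+w_{(1,i)})$ for every $i$. The second is settled by the separation $\|w_{(0,i)}-w_{(0,j)}\|\ge\delta$, which gives the lower bound $\frac\delta2$.
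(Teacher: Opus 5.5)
Your proposal is correct and follows the paper's proof essentially verbatim: the same induction claim with $f(t_\alpha)=w_\varnothing$ and $f(b_\alpha)=0$, the same choice $f(x_1^i)=\frac12 w_{(0,i)}$, and the same gluing $\frac12 f^{(0,i)}$ and $\frac12\bigl(w_{(0,i)}+f^{(1,i)}\bigr)$ on the copies $D^{(i,\pm)}$. Your explicit normalization $w_s=\frac14 v_s+z$ with $\|z\|=\frac34$ is more careful than the paper's, which rescales by $\frac12$ and cannot guarantee both $\|w_s\|\le 1$ and $\|w_s\|\ge\frac12$; the trivial case $\beta=0$, which the limit step uses when some $\beta_n=0$, should also be stated, but it is immediate.
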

	
	\begin{proof} Assume that $X$ has the $\alpha$-sprawling tree property. So there exists $\delta \in (0,1]$ such that $B_X$ contains a $2\delta$-$\alpha$-sprawling tree. By rescaling and translating this tree, we deduce the existence of $(w_s)_{s\in S_\alpha}$ a $\delta$-$\alpha$-sprawling tree  in $B_X$ such that $\norm{w_s}\ge \frac12$, for all $s \in S_\alpha$. We now claim that there exists $f_\alpha\colon D_\alpha^\omega \to X$ which satisfies:
		\begin{itemize}
			\item[a)] $\frac\delta2\, d_\alpha(u,v)\leq \norm{f(u)-f(v)}\leq d_\alpha(u,v)$ for all $\set{u,v}\in AP_\alpha^\omega$,
			\item[b)] $f_\alpha(t_\alpha)=w_\varnothing$,
			\item[c)] $f_\alpha(b_\alpha)=0$.
		\end{itemize}  
		We will prove the claim by induction. The case $\alpha=0$ is obvious. For $\alpha=1$, we  define $f(x_1^i)=\frac12w_{(0,i)}$ for every $i \in \omega$. 
		We can check that 
		\begin{itemize}
			\item $\|f_{t_1b_1}\|=\|f(t_1)-f(b_1)\| \in [\frac12,1]$.
			\item $\|f_{t_1x_1^i}\|=2\|f(t_1)-f(x_1^i)\|=2\|w_\varnothing-\frac12w_{(0,i)}\|=\|w_{(1,i)}\| \in [\frac12,1]$, $i \in \omega$.
			\item $\|f_{x_1^ib_1}\|=2\|f(x_1^i)-f(b_1)\|=\|w_{(0,i)}\| \in [\frac12,1]$, $i \in \omega$.
			\item $\|f_{x_1^ix_1^j}\|=\|f(x_1^1)-f(x_1^j)\|=\|\frac12(w_{(0,i)}-w_{(0,j)})\| \in (\frac\delta2,M]$
		\end{itemize}
		This gives $\frac\delta2\, d_1(u,v)\leq \norm{f(u)-f(v)}\leq  d_1(u,v)$ for all $\set{u, v} \in AP_1^\omega$.
		
		For $\alpha$ limit and $(\beta_n)_{n\in \omega}$ our fixed enumeration of all ordinals less than $\alpha$, the induction hypothesis yields a collection of maps $f_{\beta_n}\colon D_{\beta_n}^\omega \to X$ such that $f_{\beta_n}(b_{\beta_n})=0$, $f_{\beta_n}(t_{\beta_n})=w_{(n)}=w_\varnothing$ and such that each $f_{\beta_n}$ has distortion on active pairs in $AP_{\beta_n}^\omega$ satisfying a). Then, we just define $f_\alpha\colon D_\alpha^\omega \to X$ as follows: $f_\alpha(b_\alpha)=0$, $f_\alpha(t_\alpha)=w_\varnothing$, and $f_\alpha=f_{\beta_n}$ on $D_{\beta_n}^\omega \setminus \{t_\alpha,b_\alpha\}$. 
		
		For $\alpha=\beta+1$, the induction hypothesis yields  maps $f_\beta^{(0,i)}, f_\beta^{(1,i)}:D_\beta^\omega \to X$ satisfying a) on active pairs in $AP_\beta^\omega$ and such that  
		\[f_\beta^{(0,i)}(b_\beta)=f_\beta^{(1,i)}(b_\beta)=0\ \text{and}\  f_\beta^{(0,i)}(t_\beta)=w_{(0,i)},\ f_\beta^{(1,i)}(t_\beta)=w_{(1,i)},\ \text{for}\  i\in \omega.
		\]
		For $i \in \omega$, we now define $f_\alpha\restricted_{D_\alpha^{(i,-)}}:=\frac12f_\beta^{(0,i)}$  and $f_\alpha\restricted_{D_\alpha^{(i,+)}}:=\frac12(w_{(0,i)}+f_\beta^{(1,i)})$. 
		Notice that, despite the multiple definitions, $f_\alpha$ is well defined on $x_\alpha^i$, $b_\alpha$ and $t_\alpha$. In particular we have $f_\alpha(t_\alpha)=\frac12(w_{(0,i)}+w_{(1,i)})=w_\varnothing$. We can now estimate the upper and lower estimates on active pairs. By the induction hypothesis and homogeneity, they will be the same as above on active pairs in $AP_{\beta}^{\omega(j,\pm)}$, $j\in \omega$. The computation for the case $\alpha=1$  will give us the estimates for active pairs from the original $D_1^\omega$ in the construction of $D_\alpha^\omega$. 
	\end{proof}

	Combining Propositions \ref{p:CountablyBranchingSprawling} and \ref{p:subLipEmbeddingOmegaDiamond} we get the following metric characterization of the $\alpha$-sprawling tree property. 
	
	\begin{theorem}\label{t:AlphaSprawlingTreeCharacterization}  Let $X$ be a Banach space and $\alpha \in (0,\omega_1)$. Then $X$ has the $\alpha$-sprawling tree property if and only if $D_\alpha^\omega$ sub-Lipschitz embeds into $X$.
	\end{theorem}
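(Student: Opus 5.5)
The equivalence follows by combining Proposition~\ref{p:CountablyBranchingSprawling} and Proposition~\ref{p:subLipEmbeddingOmegaDiamond}, after a normalization step that puts an arbitrary sub-Lipschitz embedding into the form required by the former.

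For the forward implication, assume $X$ has the $\alpha$-sprawling tree property. Proposition~\ref{p:subLipEmbeddingOmegaDiamond} directly produces $f\colon D_\alpha^\omega\to X$ and $A\in(0,1]$ with
\[
A\,d_\alpha(u,v)\le\|f(u)-f(v)\|\le d_\alpha(u,v)
\]
for all $\{u,v\}\in AP_\alpha^\omega$. This is precisely a sub-Lipschitz embedding of $D_\alpha^\omega$ into $X$.

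For the converse, assume $D_\alpha^\omega$ sub-Lipschitz embeds into $X$. Then there are $A,B>0$ and $g\colon D_\alpha^\omega\to X$ with
\[
A\,d_\alpha(u,v)\le\|g(u)-g(v)\|\le B\,d_\alpha(u,v)
\]
on active pairs. Set $f=g/B$. Then $f$ satisfies
\[
\tfrac{A}{B}\,d_\alpha(u,v)\le\|f(u)-f(v)\|\le d_\alpha(u,v)
\]
on $AP_\alpha^\omega$, and $A/B\in(0,1]$ because $A\le B$; this holds since $AP_\alpha^\omega$ is nonempty (it contains $\{t_\alpha,b_\alpha\}$). Proposition~\ref{p:CountablyBranchingSprawling} now yields a $\frac{2A}{B}$-$\alpha$-sprawling tree in $B_X$ rooted at $f_{t_\alpha b_\alpha}$. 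Taking $\delta=2A/B$ in Definition~\ref{d:ST-index}, $X$ has the $\alpha$-sprawling tree property.

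The argument has no real obstacle. The only points to check are the normalization $A/B\le 1$ and that the active-pair estimates are exactly the hypotheses of the two propositions. One also notes that the restriction $\alpha>0$ in the statement merely matches the setting where sub-Lipschitz embeddability is meaningful; the propositions themselves cover all $\alpha\in[0,\omega_1)$.
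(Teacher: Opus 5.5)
Your proof is correct and follows the paper's approach: the theorem is obtained by combining Proposition~\ref{p:subLipEmbeddingOmegaDiamond} (sprawling tree property $\Rightarrow$ sub-Lipschitz embedding) with Proposition~\ref{p:CountablyBranchingSprawling} (sub-Lipschitz embedding $\Rightarrow$ a $2A$-$\alpha$-sprawling tree in $B_X$). Your rescaling $f=g/B$, which uses $\{t_\alpha,b_\alpha\}\in AP_\alpha^\omega$ and $d_\alpha(t_\alpha,b_\alpha)=1$ to get $A\le B$, is the normalization the paper leaves implicit.
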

	
	As an immediate consequence we get the following.
	
	\begin{corollary}\label{c:AlphaSprawlingTreeRigidity}
		Let $X$ and $Y$ be Banach spaces and $\alpha \in (0,\omega_1)$. Assume that $Y$ bi-Lipschitz embeds into $X$ and $X$ fails the $\alpha$- sprawling tree property. Then $Y$ fails the $\alpha$-sprawling tree property. In other words if $Y$ bi-Lipschitz embeds into $X$, then $ST(Y)\le ST(X)$.  
	\end{corollary}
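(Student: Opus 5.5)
The plan is to deduce the corollary from Theorem \ref{t:AlphaSprawlingTreeCharacterization} by composing embeddings, following the pattern of Corollary \ref{c:AlphaTreeRigidity}. I argue by contraposition: assume $Y$ has the $\alpha$-sprawling tree property, and show that $X$ does too.

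First, Theorem \ref{t:AlphaSprawlingTreeCharacterization} applied to $Y$ gives a map $g\colon D_\alpha^\omega\to Y$ and constants $A,B>0$ with
\[A\,d_\alpha(u,v)\le \|g(u)-g(v)\|\le B\,d_\alpha(u,v)\quad\text{for all } \{u,v\}\in AP_\alpha^\omega.\]
Next, let $\varphi\colon Y\to X$ be bi-Lipschitz, so there are $c,C>0$ with $c\|y-y'\|\le \|\varphi(y)-\varphi(y')\|\le C\|y-y'\|$ for all $y,y'\in Y$. Put $h=\varphi\circ g\colon D_\alpha^\omega\to X$. For every active pair $\{u,v\}$ we then have
\[cA\,d_\alpha(u,v)\le\|h(u)-h(v)\|\le CB\,d_\alpha(u,v).\]
The estimate only uses pairs that are active; the bi-Lipschitz condition is needed on the two points $g(u),g(v)$ alone, and it holds for all pairs in $Y$. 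So $D_\alpha^\omega$ sub-Lipschitz embeds into $X$, and the other implication of Theorem \ref{t:AlphaSprawlingTreeCharacterization} shows that $X$ has the $\alpha$-sprawling tree property. This contradicts the hypothesis.

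For the index reformulation $ST(Y)\le ST(X)$, I note that $ST(Z)>\alpha$ holds exactly when $Z$ has the $\alpha$-sprawling tree property. Indeed, $ST(Z)>\alpha$ means there is $\delta>0$ with $ST(Z,\delta)>\alpha$. Since a $\delta$-$\beta$-sprawling tree restricts to a $\delta$-$\gamma$-sprawling tree for $\gamma<\beta$, this gives a $\delta$-$\alpha$-sprawling tree in $B_Z$. I take that restriction claim at face value rather than checking it.

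Fix $\alpha\in(0,\omega_1)$ with $ST(X)\le\alpha$. Then $X$ fails the $\alpha$-sprawling tree property, so $Y$ does as well, and hence $ST(Y)\le\alpha$. Taking $\alpha=ST(X)$ when this value is a countable nonzero ordinal gives the inequality. The cases $ST(X)\in\{0,\infty\}$ or $ST(X)\ge\omega_1$ need a separate look. If $ST(X)\ge\omega_1$ the inequality is trivial or can be dealt with directly, and the value $0$ cannot occur because $B_X$ always contains a $0$-tree. No step here is a real obstacle; the only care needed is this bookkeeping of the index values.
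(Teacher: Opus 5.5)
Your proof is correct and is exactly the argument behind the paper's ``immediate consequence'': compose the sub-Lipschitz embedding of $D_\alpha^\omega$ into $Y$ given by Theorem~\ref{t:AlphaSprawlingTreeCharacterization} with the bi-Lipschitz map $Y\to X$, and apply the theorem again in $X$.

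In the index bookkeeping, two small points. First, your unchecked claim is true in the form ``a $\delta$-$\beta$-sprawling tree contains, above a suitable node, a $\delta$-$\gamma$-sprawling tree for every $\gamma<\beta$'', by an easy induction on $\beta$; it is what gives ``$\alpha$-sprawling tree property $\Rightarrow ST(Z)>\alpha$'', while the converse is immediate from the definition. Second, the case $ST(X)\ge\omega_1$ needs no separate treatment: $ST(X,\delta)$ is non-increasing in $\delta$, so $ST(X)=\sup_n ST(X,1/n)$ is either countable or $\infty$.
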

	
	\begin{theorem}\label{t:ISTPRigidity}
		Let $X$ and $Y$ be Banach spaces. Assume that $X$ fails the Infinite Sprawling Tree Property and that $Y$ bi-Lipschitz embeds into $X$. Then $Y$ fails the Infinite Sprawling Tree Property.
	\end{theorem}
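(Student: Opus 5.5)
The proof will follow the one given for Theorem \ref{t:ITPRigidity}, with Theorem \ref{t:ISTP_ST(X)} and Corollary \ref{c:AlphaSprawlingTreeRigidity} in place of Theorem \ref{t:ITP-DT(X)} and Corollary \ref{c:AlphaTreeRigidity}. The approach is to reduce to separable spaces and then use the countability of the index. Fix a bi-Lipschitz embedding $g\colon Y\to X$.

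\textbf{Reduction to separable $Y$.} Argue by contraposition and suppose $Y$ has the Infinite Sprawling Tree Property. Then there exist $\delta>0$ and $f\colon A_\infty\to B_Y$ with $f\restricted_{A_n}\in\BST(Y,\delta)$ for every $n$. The set $A_\infty=(\set{0,1}\times\omega)^{<\omega}$ is countable, so the closed linear span $Y_0$ of $f(A_\infty)$ is a separable subspace of $Y$. The same $f$ witnesses that $Y_0$ has the Infinite Sprawling Tree Property, because the defining conditions only involve the norm and the vector operations, and $B_Y\cap Y_0=B_{Y_0}$. Replacing $Y$ by $Y_0$, we may assume $Y$ is separable.

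\textbf{Reduction to separable $X$.} Let $X_0$ be the closed linear span of $g(Y)$; it is separable because $Y$ is separable and $g$ is continuous. The map $g$ is still a bi-Lipschitz embedding of $Y$ into $X_0$. We must also check that $X_0$ fails the Infinite Sprawling Tree Property. This holds because any infinite sprawling tree in $B_{X_0}$ is also one in $B_X$, so the property would pass from $X_0$ up to $X$, which fails it.

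\textbf{Conclusion.} Theorem \ref{t:ISTP_ST(X)} applied to the separable space $X_0$ gives $ST(X_0)<\omega_1$. Corollary \ref{c:AlphaSprawlingTreeRigidity} then gives $ST(Y)\le ST(X_0)<\omega_1$. The converse direction of Theorem \ref{t:ISTP_ST(X)} for the separable space $Y$ yields that $Y$ fails the Infinite Sprawling Tree Property, a contradiction.

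\textbf{Main obstacle.} There is no substantial obstacle. The only points to watch are that Definitions \ref{d:ST-index} and Theorem \ref{t:ISTP_ST(X)} are stated for separable spaces, which forces the two reductions above, and that the passage to subspaces must be checked in the correct direction for each space.
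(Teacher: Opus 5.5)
Your proposal is correct and takes the same approach as the paper, whose proof of this theorem just repeats the argument for Theorem~\ref{t:ITPRigidity}: reduce to separable $Y$ via the span of the tree, reduce to separable $X$ via the closed span of the range of the embedding, then conclude with Theorem~\ref{t:ISTP_ST(X)} and Corollary~\ref{c:AlphaSprawlingTreeRigidity}. Your checks that $g$ restricted to $Y_0$ is still an embedding and that $X_0$ inherits the failure of the property are exactly the details the paper leaves implicit.
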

	
	\begin{proof} The proof is the same as for Theorem \ref{t:ITPRigidity}. 
	\end{proof}
	
	Let us conclude this section with a statement on universal spaces for bi-Lipschitz embeddings. 
	
	\begin{corollary}\label{c:obstructionToSPCP}
		Let $X$ be a separable Banach space such that every countable complete metric space bi-Lipschitz embeds into $X$. Then $X$ has the Infinite Sprawling Tree Property and thus fails the Slice PCP. In particular if $c_0$ bi-Lipschitz embeds into $X$, then $X$ fails Slice PCP.
	\end{corollary}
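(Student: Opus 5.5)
The plan is to feed universality into Theorem \ref{t:AlphaSprawlingTreeCharacterization}, then use the descriptive-set-theoretic Theorem \ref{t:ISTP_ST(X)} to get ISTP, and finally compare indices via Proposition \ref{p:C(X)_ST(X)} and Proposition \ref{p:SlicePCP} to exclude Slice PCP.

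By the Remark following Definition \ref{d:Diamonds}, each $D_\alpha^\omega$ with $\alpha\in(0,\omega_1)$ is a countable complete metric space. By hypothesis it bi-Lipschitz embeds into $X$. A bi-Lipschitz embedding is in particular a sub-Lipschitz embedding, since the required inequalities then hold for all pairs and hence for active pairs. Theorem \ref{t:AlphaSprawlingTreeCharacterization} therefore shows that $X$ has the $\alpha$-sprawling tree property for every $\alpha<\omega_1$.

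Equivalently, for each $\alpha$ there is some $\delta>0$ with $ST(X,\delta)>\alpha$. Hence $ST(X)=\sup_{\delta>0}ST(X,\delta)\ge\omega_1$, so $ST(X)$ is not a countable ordinal. Since $X$ is separable, Theorem \ref{t:ISTP_ST(X)} then shows that $X$ has the Infinite Sprawling Tree Property.

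The one point needing care is that $\delta$ depends on $\alpha$. This is handled by the contrapositive of Theorem \ref{t:ISTP_ST(X)}: if ISTP failed, every $\BST(X,\delta)$ would be well founded. By Lemma \ref{l:WFomega_1} each height $h(\BST(X,\delta))$ would then be countable. Moreover, $ST(X,\delta)$ is nonincreasing in $\delta$, so the supremum over $\delta=1/n$ would also be countable, which is a contradiction.

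For the failure of Slice PCP, Proposition \ref{p:C(X)_ST(X)} gives $C(X)\ge ST(X)\ge\omega_1$, that is, $C(X)$ is not countable. Proposition \ref{p:SlicePCP}, which applies because $X$ is separable, then shows that $X$ fails the Slice PCP.

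For the final assertion, recall Aharoni's theorem: every separable metric space, in particular every countable complete one, bi-Lipschitz embeds into $c_0$. If $c_0$ bi-Lipschitz embeds into $X$, composing the two embeddings shows that every countable complete metric space bi-Lipschitz embeds into $X$. Separability of $X$ is not needed here. We may replace $X$ by the closed linear span of the image of $c_0$, which is separable. Failure of Slice PCP in this subspace passes to $X$, because Slice PCP is inherited by subspaces: half-spaces and weak neighbourhoods of a subspace are traces of those of $X$, via Hahn–Banach. The main (minor) obstacle is this reduction and justifying that the universality hypothesis is inherited; alternatively, one can apply the first part directly to the span of the images of all the $D_\alpha^\omega$.
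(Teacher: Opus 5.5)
Your proposal is correct and follows essentially the same route as the paper. The paper argues by contradiction: if ISTP failed, Theorem \ref{t:ISTP_ST(X)} would give $ST(X)=\alpha<\omega_1$, contradicting the embeddability of $D_\alpha^\omega$ via Proposition \ref{p:CountablyBranchingSprawling}; it then concludes with Propositions \ref{p:C(X)_ST(X)} and \ref{p:SlicePCP} and Aharoni's theorem exactly as you do. Your direct phrasing and your extra reduction to a separable subspace in the $c_0$ case are both fine, and that reduction only needs restriction of functionals, not Hahn--Banach.
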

	
	\begin{proof}
		Assume that $X$ fails the Infinite Sprawling Tree Property. Then, by Theorem \ref{t:ISTP_ST(X)}, $ST(X)=\alpha<\omega_1$ and by the previous proposition, $D_\alpha^\omega$ does not bi-Lipschitz embeds into $X$. This is a contradiction. The fact that $X$ fails Slice PCP then follows from Propositions \ref{p:C(X)_ST(X)} and \ref{p:SlicePCP}. The last part of the statement follows from Aharoni's theorem \cite{Aharoni}, which states that every separable metric space bi-Lipschitz embeds into $c_0$.
	\end{proof}
	
	\begin{remark}
		The fact that a separable Banach space which is universal for separable metric spaces and bi-Lipschitz embeddings fails Slice PCP cannot be obtained through differentiability arguments and is a new result. Lemma \ref{l:LLT} is the tool that allowed us to go around the use of weak$^*$ compactness. We do not know whether we can avoid the use of convexity. The following questions remain open. 
	\end{remark}
	
	\begin{question}\label{q:universalPCP}
		Assume that $D_\alpha^\omega$ (sub)-Lipschitz embeds into $X$. Does it imply that $\Phi(X)>\alpha$? If $c_0$ bi-Lipschitz embeds into a separable Banach space $X$, does it imply that $X$ fails PCP?
	\end{question}
	
	\begin{remark}
		It is important to mention that a nice result on universal spaces has been recently obtained by R. J. Smith in \cite{Smith}, where it is proved that if a complete separable metric space $M$ contains a bi-Lipschitz copy of every countable complete and discrete metric space, then $M$ must contain a bi-Lipschitz copy of $c_0$. In particular, if $X$ is separable Banach space, then using a classical differentiability argument, it follows that $X$ cannot have RNP, or using Corollary \ref{c:obstructionToSPCP}, $X$ must fail Slice PCP. The arguments from \cite{Smith} are based on the use of the descriptive set theory for the class of complete separable metrics. This implies to work on a well chosen Polish space and is only adapted to the separable setting. On the other hand, we could define diamonds for any ordinal height. We shall not detail this generalization. Let us just mention that it allows us to prove the following result. Let $X$ be a Banach space with density character $\aleph$ such that every complete metric space of cardinality $\aleph$ bi-Lipschitz embeds into $X$. Then $X$ fails Slice PCP and thus also fails RNP.
	\end{remark}
	
	\subsection{Some remarks on asymptotic midpoint uniform convexity}
	
	First, we recall the notion of asymptotic midpoint uniform convexity.
	
	\begin{definition} Let $(X,\|\ \|)$ be a Banach space. The norm $\|\ \|$ is said to be \emph{asymptotically midpoint uniformly convex} (AMUC) if for every $\varepsilon>0$, there exists $\delta>0$ such that for all $x\in X$ and all weakly null net $(u_\alpha)_{\alpha \in A}$ in X satisfying $\|u_\alpha\|\ge \varepsilon$, $\|x+u_\alpha\|\le 1$ and $\|x-u_\alpha\|\le 1$ for all $\alpha \in A$, we have $\|x\| \le 1-\delta$. For a given $\eps>0$, the supremum of all $\delta \ge 0$ satisfying the above property is denoted $\delta_{AMUC}(\eps)$ and called the AMUC modulus. 
	\end{definition}
	
	\begin{remark}
		It is clear that an AUC norm is AMUC. There are examples of AMUC norms that are not AUC \cite{DKRRZ}. It is an important open question to know whether any space with an AMUC  norm admits an equivalent AUC norm.
	\end{remark}
	
	Let us introduce two properties of norms, that are well adapted to the subject of this article and will turn out to be equivalent to AMUC. 
	\begin{definition}  Let $(X,\|\ \|)$ be a Banach space.
		\begin{enumerate}
			\item We say that the norm $\|\ \|$ is  \emph{asymptotically sprawling uniformly convex} (ASUC) if for every $\varepsilon>0$, there exists $\delta>0$ such that for all $x\in B_X$ and all $\eps$-spider in $B_X$ rooted at $x$, we have that $\|x\| \le 1-\delta$. The supremum of all $\delta\ge 0$ satisfying the above is denoted $\delta_{ASUC}(\eps)$. \item We say that the norm $\|\ \|$ is  \emph{asymptotically diamond uniformly convex} (ADUC) if for every $\varepsilon>0$, there exists $\delta>0$ such that for all $x\in B_X$ we have the following implication: if there exists $f:D^\omega_1 \to B_X$ such that $f(b)=0$, $f(t)=x$ and $\varepsilon d(x,y)\leq \norm{f(x)-f(y)}\leq d(x,y)$ for all $x,y \in D_1^\omega$, then $\norm{x}\leq 1-\delta$. The supremum of all $\delta\ge 0$ satisfying the above is denoted $\delta_{ADUC}(\eps)$.
		\end{enumerate}
	\end{definition}
	
	The following proposition establishes the simple links between these moduli. 
	
	\begin{proposition}  Let $(X,\|\ \|)$ be a Banach space and $\eps >0$. Then
		\begin{enumerate}
			\item $\delta_{AMUC}(\eps)\ge \delta_{ASUC}(\eps/2)$ and $\delta_{ASUC}(\eps)\ge \delta_{AMUC}(\eps/2)$.
			\item $\delta_{ADUC}(\eps)\ge \delta_{ASUC}(2\eps)$ and
			$\delta_{ASUC}(\eps)\ge \delta_{ADUC}(\eps/4)$.
		\end{enumerate}
		In particular, the following are equivalent:
		\begin{enumerate}
			\item $\norm{\cdot}$ is ASUC,
			\item $\norm{\cdot}$ is ADUC,
			\item $\norm{\cdot}$ is AMUC.
		\end{enumerate}
	\end{proposition}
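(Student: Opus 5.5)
The plan is to prove the four inequalities directly, each by turning the witnessing object for one modulus into a witnessing object for the other, with the same root $x$. The equivalence of ASUC, ADUC and AMUC then follows at once, since each modulus is positive for all $\eps>0$ as soon as one of the others is. In each case I fix $\delta$ strictly below the right-hand modulus, take an object of the left-hand type rooted at $x$, build an object of the right-hand type rooted at the same $x$, and conclude $\norm{x}\le 1-\delta$.

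\emph{(1), first inequality.} Let $x$ and a weakly null net $(u_\alpha)$ with $\norm{u_\alpha}\ge\eps$ and $\norm{x\pm u_\alpha}\le 1$ be given. I extract a sequence $(u_n)$ from the net inductively. Having chosen $u_1,\dots,u_k$, pick norming functionals $f_i$ with $f_i(u_i)=\norm{u_i}$ and $\norm{f_i}=1$. By weak nullity, choose $u_{k+1}$ far enough in the net that $|f_i(u_{k+1})|<\eps/2$ for all $i\le k$. Then $\norm{u_i-u_{k+1}}\ge f_i(u_i-u_{k+1})>\eps/2$. Setting $x_{(0,n)}=x+u_n$ and $x_{(1,n)}=x-u_n$ gives an $\eps/2$-spider in $B_X$ rooted at $x$, so $\norm{x}\le 1-\delta$.

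\emph{(1), second inequality.} Let an $\eps$-spider $y_n=x_{(0,n)}$, $z_n=x_{(1,n)}$ be given, so that $x=\frac12(y_n+z_n)$. As noted after the definition of sprawling trees, $z_n=2x-y_n$, hence $\frac12(y_n+z_m)=x+w_{n,m}$ and $\frac12(y_m+z_n)=x-w_{n,m}$, where $w_{n,m}=\frac12(y_n-y_m)$. Both points lie in $B_X$ by convexity, and $\norm{w_{n,m}}\ge\eps/2$. To get a weakly null net, I index by weak neighborhoods $V$ of $0$ determined by finitely many functionals. As in the proof of Lemma~\ref{l:LLT}, I pass to a subsequence along which these functionals converge on $(y_n)$; then $w_{n,m}\in V$ for $n\neq m$ large, and I take $u_V$ to be such a $w_{n,m}$. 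The net $(u_V)$ is weakly null and witnesses the AMUC condition with $\eps/2$.

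\emph{(2), first inequality.} Given $f:D_1^\omega\to B_X$ as in the ADUC definition, I set $y_n=2f(x_1^n)$ and $z_n=2(x-f(x_1^n))$. Their norms equal $2\norm{f(x_1^n)-f(b)}\le 1$ and $2\norm{f(t)-f(x_1^n)}\le 1$, their average is $x$, and $\norm{y_n-y_m}=2\norm{f(x_1^n)-f(x_1^m)}\ge 2\eps$. This gives a $2\eps$-spider in $B_X$ rooted at $x$.

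\emph{(2), second inequality.} This is the delicate one. Given an $\eps$-spider rooted at $x$, I copy the case $\alpha=1$ of Proposition~\ref{p:subLipEmbeddingOmegaDiamond}: set $f(b)=0$, $f(t)=x$ and $f(x_1^n)=\frac12 y_n$. The upper bounds on all pairs are immediate. The pair $\{x_1^n,x_1^m\}$ gives $\frac12\norm{y_n-y_m}\ge\eps/2\ge\eps/4$. The pairs $\{b,x_1^n\}$ and $\{t,x_1^n\}$ need $\norm{y_n},\norm{z_n}\ge\eps/4$. Since the $y_n$ are $\eps$-separated, as are the $z_n$, at most one index violates each bound, so I discard those indices; this still leaves a spider.

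The main obstacle is the pair $\{t,b\}$, which needs $\norm{x}\ge\eps/4$. My first attempt is to treat $\norm{x}<\eps/4$ separately: then $\norm{x}\le 1-\delta$ holds trivially provided $\delta_{ADUC}(\eps/4)\le 1-\eps/4$. I would try to prove that bound directly, by noting that an admissible $f$ always forces $\norm{f(t)}\ge\eps/4$, which is exactly the lower bound on the pair $\{t,b\}$. If this step does not go through, I would perturb the root slightly along a spider direction and adjust the constants accordingly.
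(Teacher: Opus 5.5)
You follow the paper's route for both inequalities in (1) and for $\delta_{ADUC}(\eps)\ge\delta_{ASUC}(2\eps)$, and those arguments are correct: the gliding hump, the net built as in Lemma~\ref{l:LLT}, and the spider $2f(x_1^n)$, $2(x-f(x_1^n))$. Your extraction via norming functionals gives an $\eps/2$-spider directly, so you do not need the continuity of $\delta_{ASUC}$ that the paper appeals to. For $\delta_{ASUC}(\eps)\ge\delta_{ADUC}(\eps/4)$, your same-root map $f(b)=0$, $f(t)=x$, $f(x_1^n)=\frac12 y_n$ works when $\norm{x}\ge\eps/4$, after discarding the at most two bad indices. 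This is more careful than the paper, which only points to the case $\alpha=1$ of Proposition~\ref{p:subLipEmbeddingOmegaDiamond}. There the lower bounds come from rescaling and translating the spider, which moves the root.

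The gap is the case $\norm{x}<\eps/4$, which you leave open. Every admissible $f$ has $\norm{f(t)}\ge\eps/4$, but this gives $\delta_{ADUC}(\eps/4)\le 1-\eps/4$ only if some admissible $f\colon D_1^\omega\to B_X$ with constant $\eps/4$ exists. If none exists, the ADUC condition at $\eps/4$ is vacuous, every $\delta\ge 0$ satisfies it, and nothing follows. Perturbing the root slightly cannot help either. If, say, $x=0$ (a spider $y_n=v_n$, $z_n=-v_n$), the root of any admissible map must have norm at least $\eps/4$, which is not a small perturbation. Adjusting the constants would prove a different inequality.

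What is missing is an admissible map built from the spider with a different, large root. Pick $n_0$ with $\norm{y_{n_0}}\ge\eps/2$; all indices but at most one qualify. Set $f(b)=0$, $f(t)=\frac12 y_{n_0}$, and $f(x_1^n)=\frac14(y_{n_0}+y_n)$ for $n\neq n_0$, excluding the at most one index with $\norm{y_n+y_{n_0}}<\eps/2$. All values lie in $\frac12 B_X$, and:
\begin{itemize}
\item $\norm{f(t)}\in[\eps/4,\frac12]$;
\item $\norm{f(t)-f(x_1^n)}=\frac14\norm{y_{n_0}-y_n}\in[\eps/4,\frac12]$;
\item $\norm{f(x_1^n)}\in[\eps/8,\frac12]$;
\item $\norm{f(x_1^n)-f(x_1^m)}=\frac14\norm{y_n-y_m}\in[\eps/4,\frac12]$.
\end{itemize}
Since $d(t,b)=d(x_1^n,x_1^m)=1$ and $d(t,x_1^n)=d(x_1^n,b)=\frac12$, these are the required bounds for constant $\eps/4$. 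So every $\delta$ satisfying the ADUC condition at $\eps/4$ has $\delta\le 1-\eps/4<1-\norm{x}$, which closes the case. If no $\eps$-spider exists at all, $\delta_{ASUC}(\eps)$ is vacuous and there is nothing to prove.
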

	
	\begin{proof} Let $x\in X$ and a weakly null net $(u_\alpha)_{\alpha}$ in X satisfying $\|u_\alpha\|\ge \varepsilon$, $\|x+u_\alpha\|\le 1$ and $\|x-u_\alpha\|\le 1$ for all $\alpha \in A$. Fix $\eta<\eps/2$. By a gliding hump argument we can build $(\alpha_n)_{n\in \N}\subset A$ such that $\|u_{\alpha_n}-u_{\alpha_m}\|\ge \eta$ and  write $x=\frac12(x-u_{\alpha_n}+x+u_{\alpha_n})$. We have built an $\eta$-spider in $B_X$ rooted at $x$. Therefore $\|x\|\le 1-\delta_{ASUC}(\eta)$. By continuity of $\delta_{ASUC}$, we deduce that $\delta_{AMUC}(\eps)\ge \delta_{ASUC}(\eps/2)$.
		
		Let $x\in B_X$ be the root of an $\eps$-spider in $B_X$. So, there exist $(y_n)_{n \in \N}$, $(z_n)_{n\in \N}$ in $B_X$ satisfying $x=\frac12(y_n+z_n)$ for all $n \in \N$ and $\|y_n-y_m\|\ge \varepsilon$ for all $n\neq m$. Let $\mathcal V$ be a weak neighborhood basis of $0$. By Lemma \ref{l:LLT}, there exist $(x^1_V)_{V \in \mathcal V}$ and $(x^2_V)_{V \in \mathcal V}$ nets in $B_X$ such that for all $V\in \mathcal V$, $x^1_v,x^2_V \in x+V$ and for any $V \in \mathcal V$ there exist $n\neq m$ so that $x^1_V=\frac12(y_n+z_m)$ and $x^2_V=\frac12(y_m+z_n)$. Then we can write $x^1_V=x+u_V$ and $x^2_V=x-u_V$, with $u_V=\frac12(z_m-z_n) \in V$ and $\|u_V\|\ge \varepsilon/2$. Therefore $\|x\|\le 1-\delta_{AMUC}(\eps/2)$. We have proved that $\delta_{ASUC}(\eps)\ge  \delta_{AMUC}(\varepsilon/2)$.
		
		The fact that $\delta_{ADUC}(\eps)\ge \delta_{ASUC}(2\eps)$ follows from Proposition \ref{p:CountablyBranchingSprawling}.
		
		The fact that $\delta_{ASUC}(\eps)\ge \delta_{ADUC}(\eps/4)$ follows from the proof of Proposition \ref{p:subLipEmbeddingOmegaDiamond}, in the case $\alpha =1$. 
	\end{proof}
	
	We introduce now yet another linear  invariant for Banach spaces. Due to the seminal paper of P. Enflo \cite{Enflo}, we know that a Banach space is super-reflexive if and only if it admits an equivalent uniformly convex norm if and only if it fails to have the Finite Tree Property. In the same spirit we give the following definition. 
	
	\begin{definition} We say that a Banach space $X$ has the \emph{Finite Sprawling Tree property} (FSTP) if there exists $\delta>0$ such that for every $n\in \omega$, $B_X$ contains a $\delta$-$n$-sprawling tree. 
	\end{definition}
	
	The following proposition is elementary.
	\begin{proposition}
		Let $X$ be a Banach space. Then $X$ has the FSTP if and only if there exists $\delta>0$ such that $B_X$ contains a $\delta$-$\omega$-sprawling tree.
	\end{proposition}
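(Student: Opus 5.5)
The converse implication is immediate. A $\delta$-$\omega$-sprawling tree $(x_s)_{s\in S_\omega}$ in $B_X$ has root satisfying $x_\varnothing=x_{(0,n)}$ for every $n$. For each $n$, the family $(x_{(0,n)^\smallfrown s})_{s\in S_{\beta_n}}$ is a $\delta$-$\beta_n$-sprawling tree in $B_X$, where $(\beta_n)$ enumerates the ordinals below $\omega$, i.e. all finite ordinals. So for every $k\in\omega$ we pick $n$ with $\beta_n=k$ and obtain a $\delta$-$k$-sprawling tree in $B_X$. This is exactly the FSTP.

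For the direct implication, assume that for some $\delta>0$ and every $k\in\omega$ the ball $B_X$ contains a $\delta$-$k$-sprawling tree $(x^k_s)_{s\in S_k}$. The obstacle is that the roots $x^k_\varnothing$ differ, while the limit step in $S_\omega$ requires all the $(0,n)$-subtrees to share the same root. I will handle this by the translation-and-rescaling trick of Remark \ref{r:FTP1}.

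First, translate each tree by $-x^k_\varnothing$. The defining conditions of a sprawling tree (midpoint identities and separation $\|x_{s^\smallfrown(0,n)}-x_{s^\smallfrown(0,m)}\|\ge\delta$) are invariant under translation. Hence $(x^k_s-x^k_\varnothing)_{s\in S_k}$ is a $\delta$-$k$-sprawling tree in $2B_X$ with root $0$. Next, divide by $2$: the midpoint identities are linear and separations scale, so $(\frac12(x^k_s-x^k_\varnothing))_{s\in S_k}$ is a $\frac\delta2$-$k$-sprawling tree in $B_X$ rooted at $0$.

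Now glue these trees along $S_\omega=\{\varnothing\}\cup\bigcup_n (0,n)^\smallfrown S_{\beta_n}$. Set $y_\varnothing=0$ and, for $s\in S_{\beta_n}$, $y_{(0,n)^\smallfrown s}=\frac12(x^{\beta_n}_s-x^{\beta_n}_\varnothing)$. At $\varnothing$, which is of type (C), we have $y_{(0,n)}=0=y_\varnothing$ for all $n$, as required. Every other node lies inside one glued subtree, so its conditions (B) or (C) are inherited from that subtree. Thus $(y_s)_{s\in S_\omega}$ is a $\frac\delta2$-$\omega$-sprawling tree in $B_X$, which proves the direct implication with constant $\delta/2$.
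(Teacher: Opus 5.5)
Your proof is correct and follows the same route as the paper's. You translate each finite-height tree to root $0$, halve it so it lies back in $B_X$ (the paper starts from $2\delta$-trees to end with constant $\delta$, whereas you end with $\delta/2$), and glue the trees at the limit node of $S_\omega$; your converse, using the subtrees rooted at $(0,n)$ with a general enumeration $(\beta_n)$, simply writes out what the paper calls obvious.
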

	
	\begin{proof} Assume that $X$ has FSTP. Then there exists $\delta>0$ such that, for all $n\in \omega$, $B_X$ contains a $2\delta$-$n$-sprawling tree: $(x_s^n)_{s\in S_n}$. Let $y_s^n=\frac12(x_s^n-x_\varnothing^n)$. Then $(y_s^n)_{s\in S_n}$ is a $\delta$-$n$-sprawling tree in $B_X$ with root $y_\varnothing=y_\varnothing^n=0$. Then, for $s\in S_\omega\setminus \{\varnothing\}$, there exists $n\in \omega$ and $t \in S_n$ such that $s=(0,n)\smallfrown t$. We set $y_s=y_t^n$. Clearly $(y_s)_{s\in S_\omega}$ is a $\delta$-$\omega$-sprawling tree in $B_X$.
		
		The converse implication is obvious. 
	\end{proof}
	
	Combining this with Propositions \ref{p:CountablyBranchingSprawling} and \ref{p:subLipEmbeddingOmegaDiamond}, we can state:
	
	\begin{proposition} Let $X$ be a Banach space. Then the following are equivalent.
		\begin{enumerate}
			\item $X$ has FSTP.
			\item There exists $\delta>0$ such that $B_X$ contains a $\delta$-$\omega$-sprawling tree.
			\item The family $(D_n^\omega)_{n \in \omega}$ equi-sub-Lipschitz embeds into $X$.
			\item $D_\omega^\omega$ sub-Lipschitz embeds into $X$. 
		\end{enumerate}
	\end{proposition}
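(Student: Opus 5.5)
The equivalence (1)$\Leftrightarrow$(2) is exactly the preceding elementary proposition, so I will prove (2)$\Rightarrow$(4)$\Rightarrow$(3)$\Rightarrow$(1). The implications (2)$\Rightarrow$(4) and (3)$\Rightarrow$(1) are quantitative instances of Propositions \ref{p:subLipEmbeddingOmegaDiamond} and \ref{p:CountablyBranchingSprawling}. The remaining implication (4)$\Rightarrow$(3) uses the restriction of an embedding of $D_\omega^\omega$ to its constituent copies of $D_n^\omega$.

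\textbf{Step (2)$\Rightarrow$(4).} If $B_X$ contains a $\delta$-$\omega$-sprawling tree, then $X$ has the $\omega$-sprawling tree property. Proposition \ref{p:subLipEmbeddingOmegaDiamond} with $\alpha=\omega$ then gives $f\colon D_\omega^\omega\to X$ with $A\,d_\omega(u,v)\le\|f(u)-f(v)\|\le d_\omega(u,v)$ on $AP_\omega^\omega$. This is a sub-Lipschitz embedding.

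\textbf{Step (4)$\Rightarrow$(3).} Let $f\colon D_\omega^\omega\to X$ satisfy $A\,d_\omega\le\|f(u)-f(v)\|\le B\,d_\omega$ on active pairs. By Definition \ref{d:Diamonds}, $D_\omega^\omega$ contains, for each $n$, a copy of $D_{\beta_k}^\omega$ with $\beta_k=n$. On this copy $d_\omega$ restricts to $d_{n}$; this is how the construction is set up, and it is consistent because the restriction of a shortest path metric to a gated piece glued only at $t,b$ agrees with the piece's own metric. Moreover $AP_\omega^\omega=\bigcup_k AP_{\beta_k}^\omega$. Restricting $f$ to these copies therefore yields maps $f_n$ with the same constants $A,B$ on $AP_n^\omega$. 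Hence the family $(D_n^\omega)_{n\in\omega}$ equi-sub-Lipschitz embeds into $X$.

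\textbf{Step (3)$\Rightarrow$(1).} Given equi-sub-Lipschitz maps $f_n$ with constants $A,B$, replace each $f_n$ by $f_n/B$. This normalizes the constants to $A/B\in(0,1]$ and $1$. Proposition \ref{p:CountablyBranchingSprawling} then gives, for every $n$, a $2A/B$-$n$-sprawling tree in $B_X$. Since $\delta=2A/B$ is independent of $n$, $X$ has the FSTP.

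\textbf{Main obstacle.} The only point needing care is the uniformity of $\delta$ in Step (3)$\Rightarrow$(1). Proposition \ref{p:CountablyBranchingSprawling} gives a lower constant $2A$ depending only on the embedding constants, and I would check that this survives the normalization with the same $\delta$ for all $n$. Everything else is bookkeeping with previously established results.
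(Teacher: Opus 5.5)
Your proposal is correct and is essentially the argument the paper has in mind: combine the preceding elementary proposition with Propositions \ref{p:CountablyBranchingSprawling} and \ref{p:subLipEmbeddingOmegaDiamond}, where the uniform $\delta=2A/B$ in (3)$\Rightarrow$(1) is immediate. Your only deviation is proving (4)$\Rightarrow$(3) by restricting to the copies of $D_n^\omega$ inside $D_\omega^\omega$. This is legitimate, since by definition $d_\omega$ restricts to $d_{\beta_k}$ and $AP_\omega^\omega=\bigcup_k AP_{\beta_k}^\omega$, and it spares you from checking that the constants in Proposition \ref{p:subLipEmbeddingOmegaDiamond} do not depend on $n$.
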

	
	
	We now turn to the links with the existence of an equivalent AMUC norm. The following corresponds essentially to Theorem 4.1 in \cite{7}, with the difference that it is stated in terms of sub-Lipschitz instead of bi-Lipschitz embeddability.
	
	\begin{proposition}\label{prop:embedding in AMUC}
		Let $X$ be a Banach space.
		\begin{enumerate}
			\item Assume $X$ admits an equivalent AMUC norm. Then the family $(D_n^\omega)_{n \in \omega}$ does not equi-sub-Lipschitz embed into $X$.
			\item Assume that there exists $q\in [1,\infty)$ and $\gamma>0$ such that $\delta_{AMUC}(\eps)\ge \gamma\eps^q$. Then, there exists $c>0$ such that for any $n\in \omega$ and any $f:D_n^\omega \to X$, the distorsion of $f$ on active pairs of $D_n^\omega$ is at least $cn^{1/q}$.
		\end{enumerate}
	\end{proposition}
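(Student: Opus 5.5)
The plan is to use the sprawling-tree machinery developed above, in particular Proposition \ref{p:CountablyBranchingSprawling}, together with the ASUC/AMUC comparison of moduli proved just before. Both parts reduce to a norm-shrinking estimate along sprawling trees.

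Throughout, we may assume the norm of $X$ is itself AMUC. Let $f:D_n^\omega\to X$ satisfy $A\,d_n(u,v)\le\|f(u)-f(v)\|\le B\,d_n(u,v)$ on active pairs. After dividing by $B$ we may take $B=1$, so the distortion is $1/A$. By Proposition \ref{p:CountablyBranchingSprawling}, $B_X$ then contains a $2A$-$n$-sprawling tree $(x_s)_{s\in S_n}$ rooted at $f_{t_nb_n}$, whose norm is at least $A$ since $\{t_n,b_n\}$ is active.

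The key step is an induction along the tree showing that norms must drop at each level. Fix $s$ with $|s|=k<n$. Its successors split as $x_s=\frac12(x_{s^\smallfrown(0,i)}+x_{s^\smallfrown(1,i)})$, with the $x_{s^\smallfrown(0,i)}$ being $2A$-separated. So after rescaling by $\rho_{k+1}:=\sup_{|t|=k+1}\|x_t\|$, the point $x_s/\rho_{k+1}$ is the root of a $(2A/\rho_{k+1})$-spider in $B_X$. By definition of ASUC, and using the previous proposition, this gives $\delta_{ASUC}(\eps)\ge\delta_{AMUC}(\eps/2)$. We obtain
\[
\rho_k\le \rho_{k+1}\bigl(1-\delta_{AMUC}(A/\rho_{k+1})\bigr)\le \rho_{k+1}-\rho_{k+1}\,\delta_{AMUC}(A),
\]
where the last inequality uses $\rho_{k+1}\le 1$ and the monotonicity of the modulus. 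Iterating from $\rho_n\le1$ down to $\rho_0=\|x_\varnothing\|\ge A$ will be the main technical point. The crude bound gives $\rho_0\le(1-\delta_{AMUC}(A))^n$, which is not enough for (2). For (2) we should instead exploit the normalized modulus more carefully: use $\rho_k\le\rho_{k+1}-\rho_{k+1}\delta(A/\rho_{k+1})$ together with $\delta(\eps)\ge\gamma\eps^q$. This yields $\rho_k\le\rho_{k+1}-\gamma A^q\rho_{k+1}^{1-q}\le\rho_{k+1}-\gamma A^q$, since $\rho_{k+1}\le1$ and $q\ge1$. Hence $A\le\rho_0\le1-n\gamma A^q$, so $n\gamma A^q\le 1$, i.e. $1/A\ge(\gamma n)^{1/q}$. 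This proves (2) with $c=\gamma^{1/q}$. Some care is needed with the AMUC modulus being defined via weakly null nets; the previous proposition's conversion loses only constant factors, which can be absorbed into $c$.

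For (1), the same telescoping works with any AMUC norm. Since $\delta(A)>0$ is fixed and $\rho_k$ decreases by at least $\rho_{k+1}\delta(A)$, we get $A\le\rho_0\le(1-\delta(A))^n$, which fails for $n$ large. Hence no uniform $A$ works for all $n$, i.e. $(D_n^\omega)_n$ does not equi-sub-Lipschitz embed. Finally, equivalent renorming changes the distortion only by a bounded factor, which justifies the initial reduction to an AMUC norm on $X$.

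The main obstacle I expect is making the per-level estimate rigorous. The ASUC definition requires the spider to lie in $B_X$ and be rooted at the rescaled point, so the rescaling by $\rho_{k+1}$ and the exact constants from the ASUC/AMUC comparison must be tracked, including the homogeneity $\delta(\eps/\rho)$ with $\rho\le1$ giving a larger $\eps$.
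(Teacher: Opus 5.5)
Your proposal is correct. For (1) it is the paper's argument: Proposition~\ref{p:CountablyBranchingSprawling} produces a $2A$-$n$-sprawling tree in $B_X$ rooted at $f_{t_nb_n}$. A level-by-level induction with the ASUC modulus, controlled by $\delta_{AMUC}$ through the preceding proposition, then gives $A\le\|f_{t_nb_n}\|\le(1-\delta(A))^n$.

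For (2) you take a more careful route than the paper, and it matters. The paper says only that (2) follows from the estimate $\|f_{t_nb_n}\|\le(1-\delta(A))^n$ by elementary calculus. With $\delta(A)\ge\gamma A^q$, that estimate only gives $n\gamma A^q\le\ln(1/A)$. This is a distortion lower bound of order $(n/\log n)^{1/q}$, not $n^{1/q}$.

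Your recursion fixes this. You track the level-wise supremum $\rho_{k+1}$ and use that the rescaled spider has separation $2A/\rho_{k+1}\ge 2A$. The homogeneity of the power-type modulus then gives $\rho_k\le\rho_{k+1}-\gamma A^q\rho_{k+1}^{1-q}\le\rho_{k+1}-\gamma A^q$. The loss per level is therefore additive rather than multiplicative, and summing over the $n$ levels yields the sharp bound $1/A\ge(\gamma n)^{1/q}$.

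This also fixes the constant. You have already used $\delta_{ASUC}(2\varepsilon)\ge\delta_{AMUC}(\varepsilon)$ inside the recursion, so $c=\gamma^{1/q}$ holds with no further loss. Your closing caveat about absorbing constants is therefore unnecessary.
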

	\begin{proof}
		(1) Assume, as we may that the norm of $X$ is AMUC and therefore ADUC. Let $A\in (0,1)$ and $f:D_n^\omega \to X$ be a map such that 
		\[A d_n^\omega(u,v) \le \|f(u)-f(v)\|\le d_n^\omega(u,v),\ \ \text{for all}\ \{u,v\} \in AP_n^\omega.
		\]
		Then an easy induction shows that $\|f_{t_nb_n}\|\le (1-\delta_{ASUC}(A))^n$. But $\|f_{t_nb_n}\|\ge A$, so $k\le \ln(1/A)(\ln(1/1-\delta_{ASUC}(A))^{-1}$. This finishes the proof of (1). 
		
		Then (2) follows easily from the previous estimate and elementary calculus.
	\end{proof}
	
	We conclude this section with an important open question. 
	
	\begin{question}\label{q:ASUC_FSTP}
		Assume $X$ fails FSTP. Does $X$ admit an equivalent ASUC norm?
	\end{question}
	
	Let us explain what would be the implications of a positive answer and what could be a strategy of proof. It follows from the discussion in this section that the converse is true. Then we would have the following equivalences: a Banach space is AMUC renormable if and only if it fails FSTP if and only if the family $(D_n^\omega)_{n \in \omega}$ does not equi-sub-Lipschitz embed into $X$. Now recall that a map $f:X\to Y$ between Banach spaces is a \emph{coarse Lipschitz embedding} if there are constants $A,B,C>0$ such that for all $x,x'\in X$ such that $\|x-x'\|\ge C$, we have
	$$A\|x-x'\|_X\le \|f(x)-f(x')\|_Y\le B\|x-x'\|_X.$$
	Since the metric spaces $D_n^\omega$ are uniformly discrete, their non equi-sub-Lipschitz embeddability is stable under coarse Lipschitz embeddings. So a positive answer to Question \ref{q:ASUC_FSTP} would imply that the property of being AMUC renormable is invariant under coarse Lipschiptz embeddings.
	
	Very naturally, one strategy could be to adapt the proof by Enflo \cite{Enflo} of the fact that a Banach space failing the Finite Tree Property admits an equivalent uniformly convex norm.


	\section{\texorpdfstring{Embedding of the $\Da^\omega$ graphs into $L_1[0,1]$}{Embedding of the graphs into}}\label{Diamonds_L1}
	
	In this section, we define a ``limit diamond'' $\Dinf$ as the inductive limit of the countably branching diamond graphs $(D_n^\omega)_{n=0}^\infty$ of finite height. We show that $\Dinf$ embeds bi-Lipschitz into $L_1[0,1]$ with distortion $2$, and then that for every countable ordinal $\alpha$, $\Da^\omega$ embeds isometrically into $\Dinf$. We deduce that the family $(\Da^\omega)_{\alpha<\omega_1}$ equi-bi-Lipschitz embeds into $L_1[0,1]$ with distortion $2$.  
	
	\subsection{\texorpdfstring{The limit diamond $\Dinf$}{The limit diamond}}
	
	Inspired by Ostrovskii's definition of a ``limit binary diamond" in~\cite{Ostrovskii} (denoted $D_\omega$ in his paper), we define $\Dinf$ as the union of the vertex sets of $\{D_n^\omega\}_{n = 0}^{\infty}$. For every $u, v \in \Dinf$, there exists $n \in \omega$ such that $u, v \in D_n^\omega$: we set $\dinf(u,v) = d_n(u, v)$. Since the canonical embeddings $D_n^\omega \to D_{n+1}^\omega$ are isometric (see (12) and (13) in \cite{7} for an explicit description of these embeddings), the distance $\dinf$ is well-defined and does not depend on the choice of $n$, such that $u, v \in D_n^\omega$. We identify all the points $t_n$, resp. $b_n$, and denote the resulting point $t$, resp. $b$. Following the notation previously used for the diamond graphs, given $i \in \omega$, we denote by $D_\infty^{\omega(i,\pm)}$ (or $D_\infty^{(i,\pm)}$ for short) the subgraph of $\Dinf$ consisting of the union over $n \in \omega$ of the vertex sets of $D_n^{\omega(i,\pm)}$.
	
	It is shown in \cite{7} that for all $n \in \omega$, $D_n^\omega$ embeds into $L_1[0,1]$ with distortion 2. The purpose of this short section is to note that these embeddings are compatible and yield a bi-Lipschitz embedding of $\Dinf$ into $L_1[0,1]$. So we shall refer the reader to \cite{7} for the notation and details. For $n \in \omega$ and $x\in D_n^\omega$,  we adopt the same coding $x = (A, r)$ as in \cite[Section 2.2]{7}, where $A \in \omega^{\leq n}$ describes the branch $x$ belongs to in $D_n^\omega$ and $r=d_n(x, b_n)$ is a dyadic rational.
	
	\medskip 
	Let us briefly explain the following extension of \cite[Theorem 3.3]{7}. 
	
	\begin{theorem}
		\label{thm: Dinf into L1}
		There exists $\Psi \colon \Dinf \to L_1[0,1]$ such that, for every $x, y \in \Dinf$:
		\[
		\frac12\dinf(x,y) \leq \norm{\Psi(x) - \Psi(y)}_1 \leq \dinf(x,y).
		\]
	\end{theorem}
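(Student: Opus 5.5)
The plan is to define $\Psi$ on each $D_n^\omega$ as the distortion-$2$ embedding $\Psi_n\colon D_n^\omega\to L_1[0,1]$ of \cite[Theorem 3.3]{7}, and to check that these maps are compatible with the canonical isometric inclusions $D_n^\omega\hookrightarrow D_{n+1}^\omega$, i.e.\ $\Psi_{n+1}\restricted_{D_n^\omega}=\Psi_n$. Once compatibility holds, $\Psi:=\bigcup_n\Psi_n$ is well defined on $\Dinf=\bigcup_n D_n^\omega$. For $x,y\in\Dinf$ we pick $n$ with $x,y\in D_n^\omega$, and then $\dinf(x,y)=d_n(x,y)$ and $\Psi(x)-\Psi(y)=\Psi_n(x)-\Psi_n(y)$. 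So the two-sided estimate $\frac12 d_n\le\|\Psi_n(x)-\Psi_n(y)\|_1\le d_n$ transfers verbatim, and no new metric estimate is needed.

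First I recall the explicit form of $\Psi_n$ in \cite{7} in terms of the coding $x=(A,r)$, where $A\in\omega^{\le n}$ and $r=d_n(x,b_n)$ is a dyadic rational. The construction of \cite{7} partitions $[0,1]$ into dyadic intervals and, at level $k$, uses a countable family of independent $\{0,1\}$-valued (Rademacher/Haar-type) patterns indexed by the branch choice $A_k\in\omega$ on the subinterval corresponding to the previous choices. $\Psi_n(x)$ is then built as an indicator-type function whose integral is $r$: the root level contributes $\mathbf 1_{[0,r]}$-like mass, and deeper levels redistribute mass according to the branch $A$.

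Next I will check that a vertex $x\in D_n^\omega$, viewed in $D_{n+1}^\omega$ via the canonical embedding ((12)--(13) of \cite{7}), has a code whose $\Psi_{n+1}$-image is $\Psi_n(x)$. The key observation is that under $D_n^\omega\hookrightarrow D_{n+1}^\omega$ a vertex keeps its branch prefix $A$ and its value $r$, and $r$ has dyadic denominator at most $2^n$. The level-$(n+1)$ refinement in $\Psi_{n+1}$ only modifies the function on pieces where $2^{n+1}r$ is not already determined at level $n$, so it acts trivially on these points. If $\Psi_n$ in \cite{7} is defined recursively, by substituting scaled copies of $\Psi_{n-1}$ into the edges of $D_1^\omega$, I would instead prove compatibility by induction on $n$ from that recursion: the new vertices are inserted only inside edges of the finest level, and the old vertices are the fixed endpoints $t,b$ of the substituted copies.

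The main obstacle is this compatibility verification. It relies on the precise normalization chosen in \cite{7}, and one may have to fix the branching conventions consistently across $n$, for instance by using a single fixed choice of countably many independent sets at each depth, so that the level-$n$ construction is literally the truncation of the level-$(n+1)$ one. If the maps of \cite{7} are not literally compatible, I would redefine them directly on $\Dinf$ with the same formula and with these uniform conventions. The distortion-$2$ computation of \cite{7} then applies unchanged to each finite $D_n^\omega$, because it only involves the two points and their codes.
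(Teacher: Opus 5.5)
Your proposal follows the paper's proof. Take the distortion-$2$ maps of \cite[Theorem 3.3]{7}, rescaled from their diameter-$2^k$ normalization (use $\chi_{S_k(x)}$ instead of $2^k\chi_{S_k(x)}$), check that $\Psi_{k+1}\restricted_{D_k^\omega}=\Psi_k$, and glue. The compatibility you flag as the main obstacle is settled in the paper by exactly your key observation: a vertex keeps its code $(A,r)$ under the canonical inclusion, and the sets $T_k^i(x)$ defining $S_k(x)$ in \cite{7} depend only on $(A,r)$, so $S_{k+1}(x)=S_k(x)$ and no fallback redefinition is needed.
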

	
	\begin{proof}
		It has been proved in \cite[Theorem 3.3]{7} that, for every $k \in \omega$, there exists $\Phi_k \colon \widetilde{D}_k^\omega \to L_1[0,1]$ such that, for every $x, y \in \widetilde{D}_k^\omega$:
		\[
		\frac12\Tilde{d_k}(x,y) \leq \norm{\Phi_k(x) - \Phi_k(y)}_1 \leq \Tilde{d_k}(x,y)
		\]
		where $(\widetilde{D}_k^\omega, \Tilde{d_k})$ is the $k^{\text{th}}$ countably branching diamond graph, but built with diameter $2^k$ rather than $1$. In our context, $d_k = 2^{-k}\Tilde{d_k}$. The map $\Phi_k$ in \cite{7} is defined by $\Phi_k(x) \coloneqq 2^k \chi_{S_k(x)}$ where $S_k(x)$ is a specific union of subsets of $[0,1]$ associated to $x$. We rescale it to define our map $\Psi_k \colon D_k^\omega \to L_1[0,1]$ as $\Psi_k(x) \coloneqq \chi_{S_k(x)}$, which compensates for the rescaling of the metric. Thus $\Psi_k$ satisfies the same bi-Lipschitz inequalities with respect to $d_k$.
		
		We claim that for every $k$, $\Psi_{k+1}\restricted_{D_k^\omega} = \Psi_k$ where $D_k^\omega$ is seen as a subset of $D_{k+1}^\omega$ through the canonical isometry. Indeed, let $x \in D_k^\omega$ and let us examine the definition of $S_k(x)$ for $x$ seen as an element of $D_{k+1}^\omega$ via the canonical inclusion. Following the notation of \cite{7}, if $x = (A, r)$ in $D_k^\omega$, its representation in $D_{k+1}^\omega$ is identical. The construction of the sets $S_k(x)$ relies on sets $T_k^i(x)$ which depend only on $A$ and $r$ (see \cite[Section 3.3]{7}), and therefore are stable under this inclusion: $T_{k+1}^i(x) = T_k^i(x)$ and thus $S_{k+1}(x) = S_k(x)$. The maps $\Psi_k$, $k \in \omega$, are therefore compatible, and the map $\Psi \colon \Dinf \to L_1[0,1]$ defined by $\Psi(x) \coloneqq \Psi_k(x)$ for $k \in \omega$ such that $x \in D_k^\omega$ is well-defined and satisfies the desired bi-Lipschitz inequalities.
	\end{proof}

	\subsection{\texorpdfstring{Isometric embedding of $\Da^\omega$  into $\Dinf$}{Isometric embedding of into}}
	First, let us list, without proof, a few lemmas, which follow immediately from the definitions of $D_\alpha^\omega$ and $\Dinf$. 
	
	\begin{lemma}
		\label{lemma: distances in D_alpha, alpha successor}
		Let $\alpha = \beta + 1 \in (0, \omega_1)$ be a successor ordinal and $x, y \in \Da^\omega \setminus \{\ta, \ba\}$: there exist $i, j \in \omega$ such that $x \in \Da^i \coloneqq \Da^{(i,+)} \cup \Da^{(i,-)}$ and $y \in \Da^j \coloneqq \Da^{(j,+)} \cup \Da^{(j,-)}$. Then:
		\begin{enumerate}
			\item If $i = j$ and $x, y$ both belong to the same $\Da^{(i,\pm)}$, then $\da(x,y) = \frac{1}{2} \db(x,y)$. Here, by $\db(x,y)$, we abusively mean the distance in $D_\beta^\omega$ of their images by the canonical bijection from  $\Da^{(i,\pm)}$ onto $D_\beta^\omega$.
			
			\item If $i = j$ and $x \in \Da^{(i,+)}$, $y \in \Da^{(i,-)}$, then 
			\[
			\da(x,y) = \da(x, \ba) - \da(y, \ba)=\da(y,t_\alpha)-\da(x,t_\alpha).
			\]
			
			\item If $i \neq j$, then
			\[
			\da(x, y) = \left\{
			\begin{array}{ll}
				\da(x, \ba) + \da(y, \ba) & \mbox{if $\da(x, \ba) + \da(y, \ba) \leq 1$} \\
				2-\da(x,\ba)-\da(y,\ta)& \mbox{if $\da(x, \ba) + \da(y, \ba) \geq 1$}
			\end{array}.
			\right.
			\]
		\end{enumerate}
	\end{lemma}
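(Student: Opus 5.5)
The plan is to work directly with the shortest path metric on $D_\alpha^\omega=\bigcup_{j<\omega} D_\alpha^{(j,+)}\cup D_\alpha^{(j,-)}$. The proof rests on two structural facts, which I would establish first.

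\emph{First fact (geodesic identity).} For every ordinal $\gamma<\omega_1$ and every $z\in D_\gamma^\omega$,
\[
d_\gamma(z,t_\gamma)+d_\gamma(z,b_\gamma)=1 .
\]
I would prove it by transfinite induction on $\gamma$.
\begin{itemize}
\item The case $\gamma=0$ is trivial, and the case $\gamma=1$ is a direct check.
\item For $\gamma$ a limit ordinal, $D_\gamma^\omega$ is obtained by gluing the $D_{\beta_n}^\omega$ at $t_\gamma,b_\gamma$. A path leaving the piece containing $z$ must pass through $t_\gamma$ or $b_\gamma$. Hence the distances from $z$ to $t_\gamma,b_\gamma$ are computed inside $D_{\beta_n}^\omega$, and the induction hypothesis applies.
\item For $\gamma=\beta+1$, take $z\in D_\gamma^{(i,-)}$. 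Every path from $z$ to the outside of its copy passes through $b_\gamma$ or $x_\gamma^i$. Using the induction hypothesis inside the copy (scaled by $\frac12$), the distances from $z$ to $b_\gamma$ and to $x_\gamma^i$ sum to $\frac12$, and $d_\gamma(x_\gamma^i,t_\gamma)=\frac12$. The triangle inequality together with $d_\gamma(t_\gamma,b_\gamma)=1$ then gives the identity. The case $z\in D_\gamma^{(i,+)}$ is symmetric.
\end{itemize}

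\emph{Second fact (separation).} The sets $\{t_\alpha,x_\alpha^i\}$, $\{x_\alpha^i,b_\alpha\}$ and $\{t_\alpha,b_\alpha\}$ are vertex cuts. Each copy $D_\alpha^{(i,\pm)}$ meets the rest of the graph only at its two endpoints, and the branch $D_\alpha^i$ meets the rest only at $t_\alpha$ and $b_\alpha$.

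With these two facts, the three items follow.

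For (1), I would take a shortest path between $x,y\in D_\alpha^{(i,\pm)}$. If it leaves the copy, it exits and re-enters through endpoints. An excursion returning through the same endpoint can be deleted. An excursion between the two distinct endpoints has length at least $\frac12$, which equals their distance inside the copy. So the excursion can be replaced by an internal path, and the distance equals $\frac12 d_\beta$ of the images.

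For (2), the triangle inequality gives $d_\alpha(x,y)\ge d_\alpha(x,b_\alpha)-d_\alpha(y,b_\alpha)$. For the reverse inequality I would use the path through $x_\alpha^i$. Its length is $d_\alpha(x,x_\alpha^i)+d_\alpha(x_\alpha^i,y)$. The geodesic identity applied within each copy gives
\[
d_\alpha(x,x_\alpha^i)=d_\alpha(x,b_\alpha)-\tfrac12,\qquad d_\alpha(x_\alpha^i,y)=\tfrac12-d_\alpha(y,b_\alpha),
\]
and summing these yields the first equality. The second equality, written with $t_\alpha$, follows from the first fact.

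For (3), any path from branch $i$ to branch $j\neq i$ passes through $t_\alpha$ or $b_\alpha$. Hence
\[
d_\alpha(x,y)=\min\bigl(d_\alpha(x,b_\alpha)+d_\alpha(y,b_\alpha),\ d_\alpha(x,t_\alpha)+d_\alpha(y,t_\alpha)\bigr).
\]
Writing $d_\alpha(x,t_\alpha)=1-d_\alpha(x,b_\alpha)$, the second term equals $2-d_\alpha(x,b_\alpha)-d_\alpha(y,b_\alpha)$. Comparing the two terms gives the stated cases. Note that $2-d_\alpha(x,b_\alpha)-d_\alpha(y,t_\alpha)$ as printed seems to be a typo; presumably $d_\alpha(x,t_\alpha)+d_\alpha(y,t_\alpha)$ is meant, and I would state it that way.

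The main obstacle is the geodesic identity at limit stages and the careful handling of infinitely many branches. Shortest path metrics on infinite graphs use infima over paths, so I would argue with the cut-vertex property for arbitrary finite paths, which suffices for the infimum.
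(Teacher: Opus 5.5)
Your argument is correct. It also supplies something the paper omits: the authors list this lemma without proof as immediate from the definitions, and item (1) and $d_\alpha(t_\alpha,b_\alpha)=1$ are simply asserted as part of Definition~\ref{d:Diamonds}. Your route makes the actual content explicit, namely the cut-vertex structure plus the identity $d_\gamma(z,t_\gamma)+d_\gamma(z,b_\gamma)=1$, from which (2) and (3) follow in one line each.

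Your correction of the second case of (3) is right. The printed $2-d_\alpha(x,b_\alpha)-d_\alpha(y,t_\alpha)$ already fails for $\alpha=2$. Take $x,y$ at distance $\frac14$ from $t_2$ in the copies $D_2^{(i,+)}$ and $D_2^{(j,+)}$, $i\neq j$. Then $d_2(x,b_2)=d_2(y,b_2)=\frac34$, so the printed formula gives $1$, whereas $d_2(x,y)=\frac14+\frac14=\frac12$. The intended $2-d_\alpha(x,b_\alpha)-d_\alpha(y,b_\alpha)$ agrees with the limit-ordinal and $D_\infty^\omega$ versions of the lemma.

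Two small points for the write-up.
\begin{enumerate}
\item The claim that an excursion between the two endpoints of a copy has length at least $\frac12$ needs its one-line reason. Such an excursion must cross, from one end to the other, the sibling copy sharing the vertex $x_\alpha^i$. Any path crossing a copy has length at least $\frac12 d_\beta(t_\beta,b_\beta)=\frac12$.
\item The successor step of your geodesic identity uses item (1) at the same level $\gamma$. So item (1), together with $d_\gamma(t_\gamma,b_\gamma)=1$, should be established first at each level. There is no circularity, since that argument only uses information from level $\beta$.
\end{enumerate}
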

	
	We observe similar identities in the limit ordinal case.
	
	\begin{lemma}
		\label{lemma: distances in D_alpha, alpha limit}
		Let $\alpha \in (0, \omega_1)$ be a limit ordinal and $(\beta_n)_{n \in \omega}$ be an enumeration of all ordinals less than $\alpha$. Let $x, y \in \Da^\omega = {\bigcup\limits_{n \in \omega}} \{n\} \times \Dbn^\omega$: there exists $n, m \in \omega$ and $x_n \in \Dbn^\omega$, $y_m \in D_{\beta_m}^\omega$ such that $x = (n, x_n)$ and $y = (m, y_m)$. Then:
		\begin{enumerate}
			\item If $n = m$, then $\da(x,y) = d_{\beta_n}(x_n, y_n)$.
			
			\item If $n \neq m$, then 
			\[
			\da(x, y) = \left\{
			\begin{array}{ll}
				d_{\beta_n}(x_n, \ba) + d_{\beta_m}(y_m, \ba) & \mbox{if $d_{\beta_n}(x_n, \ba) + d_{\beta_m}(y_m, \ba) \leq 1$} \\
				2 - d_{\beta_n}(x_n, \ba) - d_{\beta_m}(y_m, \ba) & \mbox{if $d_{\beta_n}(x_n, \ba) + d_{\beta_m}(y_m, \ba) \geq 1$}
			\end{array}.
			\right.
			\]
		\end{enumerate}
	\end{lemma}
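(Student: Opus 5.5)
The plan is to work directly from the description of $\Da^\omega$ as the gluing of the spaces $\Dbn^\omega$ along their tops and bottoms, with $\da$ the induced shortest path metric. Concretely, $\da(x,y)$ is the infimum, over all finite chains $x=z_0,z_1,\dots,z_k=y$ in which consecutive points lie in a common copy $\{n\}\times\Dbn^\omega$, of $\sum_i d_{\beta_{n_i}}(z_{i-1},z_i)$. Since $t_{\beta_n}$ and $b_{\beta_n}$ are identified with $\ta$ and $\ba$, the expressions $d_{\beta_n}(x_n,\ba)$ in the statement make sense. The whole argument rests on one auxiliary fact, which I would establish first.

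\emph{Auxiliary fact.} For every $\gamma<\omega_1$ and every $z\in D_\gamma^\omega$,
\[
d_\gamma(z,t_\gamma)+d_\gamma(z,b_\gamma)=1.
\]
I would prove this by transfinite induction on $\gamma$, simultaneously with the present lemma, which supplies the limit step.
\begin{itemize}
\item For $\gamma=0$ and $\gamma=1$ it is immediate.
\item For $\gamma=\beta+1$, take $z$ in the copy $D_\gamma^{(i,+)}$. Any path from $z$ to $b_\gamma$ must leave this copy through $t_\gamma$ or through $x_\gamma^i$, and any path to $t_\gamma$ likewise. This gives $d_\gamma(z,b_\gamma)=\tfrac12+\tfrac12 d_\beta(z,b_\beta)$ and $d_\gamma(z,t_\gamma)=\tfrac12 d_\beta(z,t_\beta)$, so the sum is $1$ by the induction hypothesis. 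The copies $D_\gamma^{(i,-)}$ are handled symmetrically.
\item For $\gamma$ a limit ordinal, part (1) of the lemma at level $\gamma$ reduces the claim to the same fact in each $D_{\beta_n}^\omega$.
\end{itemize}

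\emph{Part (1).} Take $x,y$ in the same copy $\{n\}\times\Dbn^\omega$. A chain that leaves this copy must exit and re-enter through one of the two glued points $\ta,\ba$.
\begin{itemize}
\item If it exits and re-enters through the same point, the excursion is a loop and can be deleted without increasing the length.
\item If it passes from $\ta$ to $\ba$ through other copies, that portion has length at least $1$, since every copy has $d_{\beta_m}(t,b)=1$. This is exactly the length of the direct route inside $\Dbn^\omega$.
\end{itemize}
Hence any chain can be replaced by one staying inside $\Dbn^\omega$ without increasing its length, and the triangle inequality in $\Dbn^\omega$ gives $\da(x,y)=d_{\beta_n}(x_n,y_n)$.

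\emph{Part (2).} Take $n\neq m$. Any chain from $x$ to $y$ must pass through $\ta$ or $\ba$, so
\[
\da(x,y)=\min\bigl(\da(x,\ba)+\da(\ba,y),\ \da(x,\ta)+\da(\ta,y)\bigr).
\]
By part (1), each of these four distances equals the corresponding distance inside its own copy. Write $r=d_{\beta_n}(x_n,\ba)$ and $s=d_{\beta_m}(y_m,\ba)$. The auxiliary fact turns the two terms into $r+s$ and $(1-r)+(1-s)=2-r-s$. The minimum equals $r+s$ exactly when $r+s\le 1$, which is the stated formula.

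\emph{Main obstacle.} The only delicate step is the chain-surgery argument in part (1): one must check that no detour through other copies produces a shortcut. The key input is that the two glued points are at distance exactly $1$ in every copy, combined with the triangle inequality. The rest is bookkeeping in the simultaneous induction.
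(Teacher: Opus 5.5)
Your proof is correct, but there is no proof in the paper to compare it with. The paper lists this lemma ``without proof'' as immediate from the definitions, and item (1) is essentially built into Definition~\ref{d:Diamonds}, which declares the restriction of $d_\alpha$ to each $D_{\beta_n}^\omega$ to be $d_{\beta_n}$. Your argument supplies the verification the authors leave implicit. Distinct copies meet only at the poles, so (1) follows by excising detours, and (2) becomes $\min\bigl(r+s,(1-r)+(1-s)\bigr)$.

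The one ingredient the paper never states is your identity $d_\gamma(z,t_\gamma)+d_\gamma(z,b_\gamma)=1$. It is used tacitly elsewhere, e.g.\ in item (2) of the successor-case lemma and in the coding $x=(A,r)$ of the last section. It is genuinely needed to turn $d_\alpha(x,t_\alpha)+d_\alpha(t_\alpha,y)$ into $2-r-s$. Your simultaneous induction is not circular, because part (1) at level $\gamma$ uses only $d_{\beta_m}(t_{\beta_m},b_{\beta_m})=1$ and the triangle inequality in each copy.

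Two details are worth writing out. First, in part (1) a detour from $t_\alpha$ to $b_\alpha$ may pass through several other copies. It can only change copy at a pole, so some single-copy piece runs from one pole to the other and already costs $1$. Second, in the successor step, getting $d_\gamma(z,b_\gamma)=\tfrac12+\tfrac12 d_\beta(z,b_\beta)$ requires checking that exiting through $x_\gamma^i$ is never worse than exiting through $t_\gamma$. This follows from $d_\beta(z,b_\beta)\le d_\beta(z,t_\beta)+1$. It also uses two facts taken from Definition~\ref{d:Diamonds}: $d_{\beta+1}$ restricts to $\tfrac12 d_\beta$ on each copy, and $d_{\beta+1}(t_{\beta+1},b_{\beta+1})=1$.
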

	
	Letting $k$ tend to $\infty$ in the case $\alpha = k \in \omega$ in Lemma \ref{lemma: distances in D_alpha, alpha successor}, we deduce the distance formulas in $\Dinf$. 
	
	\begin{lemma}
		\label{lemma: distances in D_infinity}
		Let $x, y \in \Dinf$ and $i, j \in \omega$ such that $x \in D_\infty^i \coloneqq D_\infty^{(i,+)} \cup D_\infty^{(i,-)}$ and $y \in D_\infty^j$. Then: 
		\begin{enumerate}
			\item If $i = j$ and $x, y$ both belong to the same $D_\infty^{(i,\pm)}$, then there exists $k\in \N$ such that they belong to the same  $D_k^{(i,\pm)}$. Then $\dinf(x,y) = \frac12d_{k-1}(x,y)$, where again, elements in $D_k^{(i,\pm)}$ are identified with their canonical images in $D_{k-1}^{(i,\pm)}$
			
			\item If $i = j$ and $x \in D_\infty^{(i,+)}$, $y \in D_\infty^{(i,-)}$, then: 
			\[
			\dinf(x,y) =  d_\infty(x, b) - d_\infty(y, b)=d(y,t)-d(x,t). 
			\]
			
			\item If $i \neq j$, then
			\[
			\dinf(x,y) = \left\{
			\begin{array}{ll}
				\dinf(x, b) + \dinf(y, b) & \mbox{if $\dinf(x, b) + \dinf(y, b) \leq 1$} \\
				2 - \dinf(x, b) - \dinf(y, b) & \mbox{if $\dinf(x, b) + \dinf(y, b) \geq 1$}
			\end{array}.
			\right.
			\]
		\end{enumerate}
	\end{lemma}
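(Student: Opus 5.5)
The distance formulas in $\Dinf$ will be obtained by passing to the limit in the finite-height formulas of Lemma~\ref{lemma: distances in D_alpha, alpha successor}. Given $x,y\in\Dinf$, I first fix $k\in\N$ with $x,y\in D_k^\omega$, and by taking $k$ larger if needed I may assume $k\ge 1$. Since $\dinf(x,y)=d_k(x,y)$, $\dinf(x,b)=d_k(x,b_k)$ and $\dinf(x,t)=d_k(x,t_k)$ (with the same identities for $y$), every quantity in the statement can be computed inside the single graph $D_k^\omega$. Well-definedness is not an issue, because the canonical inclusions $D_k^\omega\hookrightarrow D_{k+1}^\omega$ are isometries fixing $t$ and $b$. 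I also check that the labels are consistent: the branch index $i$ with $x\in D_\infty^{(i,\pm)}$ is the same as the branch index of $x$ in $D_k^\omega=\bigcup_j D_k^{(j,+)}\cup D_k^{(j,-)}$. This is because $D_\infty^{(i,\pm)}$ was defined as the increasing union of the $D_n^{(i,\pm)}$, and the inclusion $D_n^\omega\subset D_{n+1}^\omega$ sends $D_n^{(i,\pm)}$ into $D_{n+1}^{(i,\pm)}$.

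Next I apply Lemma~\ref{lemma: distances in D_alpha, alpha successor} with $\alpha=k=\beta+1$, $\beta=k-1$, to the points $x,y$. The three cases of that lemma correspond exactly to the three cases here.

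\begin{itemize}
\item[(1)] If $x,y$ lie in the same $D_k^{(i,\pm)}$, the lemma gives $d_k(x,y)=\frac12 d_{k-1}(x,y)$, computed through the canonical bijection $D_k^{(i,\pm)}\to D_{k-1}^\omega$. This is exactly the claimed formula.
\item[(2)] If $x\in D_k^{(i,+)}$ and $y\in D_k^{(i,-)}$, the lemma gives $d_k(x,y)=d_k(x,b_k)-d_k(y,b_k)=d_k(y,t_k)-d_k(x,t_k)$. Replacing each term by its $\dinf$ value gives the claim.
\item[(3)] If $i\ne j$, I transport the two-branch formula in the same way. The only thing to check is that the second branch is written symmetrically, $2-\dinf(x,b)-\dinf(y,b)$. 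For that I use $d_k(y,t_k)=1-d_k(y,b_k)$, which holds because every vertex lies on a geodesic from $t$ to $b$. Hence $2-d(x,b)-d(y,t)$ equals $1-d(x,b)+d(y,b)$, and this is not the same as $2-d(x,b)-d(y,b)$.
\end{itemize}

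So in case (3) I have to reconcile the two formulations. I will re-derive that case directly: a shortest path from $x$ to $y$ in different branches must pass through $b$ or through $t$, which gives
\[
d(x,y)=\min\bigl(d(x,b)+d(y,b),\ d(x,t)+d(y,t)\bigr)=\min\bigl(d(x,b)+d(y,b),\ 2-d(x,b)-d(y,b)\bigr).
\]
This matches the form stated for $\Dinf$ and for the limit-ordinal lemma. I would read the expression in Lemma~\ref{lemma: distances in D_alpha, alpha successor}(3) as this symmetric one, presumably a typo there. The underlying reason is that the distance to $t$ or $b$ is preserved along branches.

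The main obstacle is the bookkeeping in case (1): making sure that the canonical inclusion $D_{k}\subset D_{k+1}$ commutes with the identifications $D_k^{(i,\pm)}\cong D_{k-1}^\omega$. This is needed so that the value $\frac12 d_{k-1}(x,y)$ does not depend on $k$. I would verify it by induction on $k$, using the explicit coding $(A,r)$ of \cite{7}: removing the first letter of $A$ and rescaling $r$ realizes the bijection at every level compatibly.
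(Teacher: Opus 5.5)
Your proposal is correct and follows the paper's route: the paper simply deduces these formulas by applying Lemma~\ref{lemma: distances in D_alpha, alpha successor} with $\alpha=k$ inside a finite $D_k^\omega$ containing $x$ and $y$. You are also right that the second line of item (3) of that lemma is a typo (it should read $2-\da(x,\ba)-\da(y,\ba)=\da(x,\ta)+\da(y,\ta)$), and your argument that a shortest path between different branches passes through $t$ or $b$ repairs it correctly; the compatibility check you single out as the main obstacle in case (1) is not needed, since for every admissible $k$ both sides equal $d_k(x,y)=\dinf(x,y)$ once $\dinf$ is well defined and the inclusions respect the copies $D_k^{(i,\pm)}$, as you already noted.
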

	
	The key argument in the proof will be the property of self-similarity of $\Dinf$: replacing every $D_\infty^{(i,\pm)}$ by a $\frac12$-scaled down copy of $\Dinf$ yields a metric space isometric to $\Dinf$ itself. More precisely, we have:
	
	\begin{lemma}
		\label{lemma: def of g}\ 
		
		\begin{enumerate}
			\item 
			Given $i \in \omega$, the maps
			\[
			g^{(i,-)} \colon \left \{
			\begin{array}{lll}
				\Dinf & \to & \Dinfim \\
				x = (A, r) & \mapsto & (i \smallfrown A, \frac{r}{2})
			\end{array}
			\right.
			\]
			and
			\[
			g^{(i,+)} \colon \left \{
			\begin{array}{lll}
				\Dinf & \to & \Dinfip \\
				x = (A, r) & \mapsto & (i \smallfrown A, \frac{r+1}{2})
			\end{array}
			\right.
			\]
			are surjective isometries onto the respective subdiamonds equipped with the scaled down metric and preserving poles. More precisely: for every $x, y \in \Dinf$,
			\[
			\dinf(g^{(i,\pm)}(x),g^{(i,\pm)}(y)) = \frac{1}{2}\dinf(x,y).
			\]
			\item For $i\neq j \in \omega$, there exists an isometry from $D_\infty^{i}$ onto $D_\infty^{j}$, which preserves the poles. 
		\end{enumerate}
		
	\end{lemma}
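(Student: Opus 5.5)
Both parts will be derived from the distance formulas of Lemma~\ref{lemma: distances in D_infinity}, which give $\dinf(x,y)$ entirely in terms of the branch index $i\in\omega$, the sign $\pm$ of the half-diamond containing each point, and the distances to the poles $b$ and $t$. The coding $x=(A,r)$ carries exactly this information: the first letter of $A$ is the branch index, and $r=\dinf(x,b)$. The sign is determined by $r$, since points with $r\le\frac12$ lie in $\Dinfim$ and points with $r\ge\frac12$ lie in $\Dinfip$, the overlap being the midpoint $x^i$ with $r=\frac12$.

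For (1), I first check that $g^{(i,\pm)}$ is well defined and surjective. If $x=(A,r)\in D_n^\omega$, then $g^{(i,-)}(x)=(i^\smallfrown A,\frac r2)$ is the code of the corresponding vertex of the copy $D_n^{\omega(i,-)}$ inside $D_{n+1}^\omega$, because the copy $D_n^{\omega(i,-)}$ has its bottom at $b$ and distances scaled by $\frac12$. Similarly, $D_n^{\omega(i,+)}$ has its bottom at the midpoint $x^i$, which sits at distance $\frac12$ from $b$, so its vertices are coded by $(i^\smallfrown A,\frac{r+1}2)$. Both maps are clearly bijective onto $\Dinfim$ and $\Dinfip$ respectively. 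They preserve poles: $g^{(i,-)}$ sends $b\mapsto b$ and $t\mapsto x^i$, while $g^{(i,+)}$ sends $b\mapsto x^i$ and $t\mapsto t$.

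For the isometry claim, given $x,y\in\Dinf$ I pick $n$ with $x,y\in D_n^\omega$. The images then lie in the single copy $D_n^{(i,\pm)}\subset D_{n+1}^\omega$. By Definition~\ref{d:Diamonds}, the metric $d_{n+1}$ restricted to that copy equals $\frac12 d_n$, so $\dinf(g(x),g(y))=\frac12 d_n(x,y)=\frac12\dinf(x,y)$. This is just item (1) of Lemma~\ref{lemma: distances in D_infinity}, read with $k=n+1$.

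There is one point to verify. The restriction of the shortest-path metric of $D_{n+1}^\omega$ to a subdiamond must coincide with the intrinsic scaled metric, i.e.\ no path leaving the subdiamond can be shorter. This holds because such a path would have to exit and re-enter through the two poles of the subdiamond, and those poles are already at the maximal distance $\frac12$ within it. I expect this to be the only real obstacle, and it is implicit in the definition and in Lemma~\ref{lemma: distances in D_alpha, alpha successor}(1).

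For (2), I define $\phi\colon D_\infty^i\to D_\infty^j$ by replacing the first letter $i$ of $A$ with $j$ and keeping $r$; in other words, $\phi=g^{(j,\pm)}\circ(g^{(i,\pm)})^{-1}$ on each half. This is well defined at $x^i$ because both formulas give $x^j$, and it preserves $b$ and $t$. I then verify the isometry case by case using Lemma~\ref{lemma: distances in D_infinity}. If both points lie in the same half, item (1) together with part (1) of the present lemma gives equality. If they lie in different halves, item (2) expresses the distance as $\dinf(x,b)-\dinf(y,b)$, and $\phi$ preserves $r=\dinf(\cdot,b)$. Hence $\phi$ is an isometry onto $D_\infty^j$.
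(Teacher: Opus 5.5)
Your argument is correct and is essentially the direct unpacking of the definitions that the paper has in mind; the paper states this lemma without proof, as following immediately from the definitions of $D_\alpha^\omega$ and $\Dinf$. As in your proposal, part (1) is the fact that $d_{n+1}$ restricts to $\frac12 d_n$ on each copy $D_n^{\omega(i,\pm)}$, and part (2) follows from part (1) together with item (2) of Lemma \ref{lemma: distances in D_infinity}, since your map preserves $\dinf(\cdot,b)$.

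One small correction concerns your side remark on why the shortest-path metric restricts correctly. The relevant fact is not that the poles are at maximal distance inside the subdiamond. What matters is that any path joining its two poles outside it has length at least $\frac12$, for instance because along every edge the distance to $b$ changes by exactly the edge length. As you note, this is in any case built into Definition \ref{d:Diamonds}.
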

	
	
	We now state and prove our embedding result.
	
	\begin{theorem}\label{t:UniversalityOfDinfty}
		For every $\alpha \in (0, \omega_1)$, there exists an isometry $\Psi_\alpha \colon \Da^\omega \to \Dinf$ such that $\Psi_\alpha(t_\alpha)=t$ and $\Psi_\alpha(b_\alpha)=b$. 
	\end{theorem}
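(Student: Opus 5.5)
We argue by transfinite induction on $\alpha\in[0,\omega_1)$ (the case $\alpha=0$ is trivial: send $t_0\mapsto t$, $b_0\mapsto b$, since $\dinf(t,b)=1$). The inductive statement is exactly the theorem: there is an isometry $\Psi_\alpha\colon\Da^\omega\to\Dinf$ with $\Psi_\alpha(t_\alpha)=t$ and $\Psi_\alpha(b_\alpha)=b$. A useful by-product is that such an isometry preserves the distance to the poles, i.e. $\dinf(\Psi_\alpha(x),b)=\da(x,b_\alpha)$ and $\dinf(\Psi_\alpha(x),t)=\da(x,t_\alpha)$ for all $x$. This is what will make the distance formulas of Lemmas \ref{lemma: distances in D_alpha, alpha successor}, \ref{lemma: distances in D_alpha, alpha limit} and \ref{lemma: distances in D_infinity} match term by term.

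\emph{Successor case $\alpha=\beta+1$.} Define $\Psi_\alpha$ on each copy $\Da^{(i,\pm)}$ by $\Psi_\alpha:=g^{(i,\pm)}\circ\Psi_\beta\circ\iota^{(i,\pm)}$. Here $\iota^{(i,\pm)}$ is the canonical bijection $\Da^{(i,\pm)}\to\Db^\omega$, and $g^{(i,\pm)}$ are the maps of Lemma \ref{lemma: def of g}. Since $g^{(i,\pm)}$ preserve poles, $\Psi_\alpha$ is well defined at the identified points: $t_\alpha$ goes to $t$, $b_\alpha$ goes to $b$, and $\xa{i}$ goes to the middle vertex of $D_\infty^i$. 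The identity $\dinf(\Psi_\alpha x,\Psi_\alpha y)=\da(x,y)$ is then checked along the three cases of Lemma \ref{lemma: distances in D_alpha, alpha successor}.
\begin{enumerate}
\item If $x,y$ lie in the same $\Da^{(i,\pm)}$, the scaling by $\frac12$ in Lemma \ref{lemma: def of g}(1) matches $\da=\frac12\db$.
\item If $x,y$ lie in $\Da^{(i,+)}$ and $\Da^{(i,-)}$, both distances are differences of distances to $b$. These coincide because $\Psi_\alpha$ preserves the distance to $b$: within $\Da^{(i,\pm)}$ the distance to $b_\alpha$ is $\frac12\db(\cdot,b_\beta)$, respectively $\frac12+\frac12\db(\cdot,b_\beta)$, which is exactly the $r/2$, respectively $(r+1)/2$, coding of $g^{(i,\pm)}$.
\item If $i\neq j$, both distances are given by the same function of the distances to $b$, by Lemma \ref{lemma: distances in D_alpha, alpha successor}(3) and Lemma \ref{lemma: distances in D_infinity}(3).
\end{enumerate}

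\emph{Limit case.} Let $(\beta_n)$ be the enumeration used for $\Da^\omega$. This is the step I expect to be the main obstacle. Placing the $D_{\beta_n}^\omega$ inside $\Dinf$ via $\Psi_{\beta_n}$ does not keep them disjoint: their images may overlap away from the poles. We need distinct $n$ to land in "different branches", so that Lemma \ref{lemma: distances in D_alpha, alpha limit}(2) is matched by Lemma \ref{lemma: distances in D_infinity}(3).

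My plan is to exploit self-similarity once more to create room. Fix an injection $n\mapsto i_n$ of $\omega$ into $\omega$, and embed $D_{\beta_n}^\omega$ into the branch $D_\infty^{i_n}$ of $\Dinf$, with poles going to $t,b$. To do this, use the isometry $\Psi_{\beta_n+1}$ (available since $\beta_n+1<\alpha$, so already constructed). Restrict it to a pole-to-pole geodesic copy inside $D_{\beta_n+1}^\omega$; concretely, compose with an isometric embedding $D_{\beta_n}^\omega\hookrightarrow D_{\beta_n+1}^{\omega,0}$ fixing poles. Such an embedding should exist by induction, since $D_{\beta}^\omega$ embeds into $D_{\beta+1}^\omega$ along the branch $0$ in the same way $D_n^\omega\hookrightarrow D_{n+1}^\omega$ does. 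Then relabel the branch $0$ as $i_n$ using Lemma \ref{lemma: def of g}(2). If proving that auxiliary embedding is awkward, the fallback is to strengthen the induction hypothesis. We would require that $\Psi_\alpha$ maps $\Da^\omega\setminus\{t_\alpha,b_\alpha\}$ into $D_\infty^{0}$, and that for every $i$ there is a variant landing in $D_\infty^{i}$. The successor step above would then be modified to use only the copies indexed by a re-enumeration inside branch $i$.

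With images in distinct branches, the distance identities reduce to Lemma \ref{lemma: distances in D_alpha, alpha limit} and Lemma \ref{lemma: distances in D_infinity}(3) together with preservation of distances to the poles. Within a single $n$, the identity holds by the induction hypothesis.
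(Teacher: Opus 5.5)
\textbf{The limit step fails.} Your successor step is correct and is the same as the paper's. The limit step, however, rests on an embedding that cannot exist.

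A single branch $D_\infty^{i}=D_\infty^{(i,+)}\cup D_\infty^{(i,-)}$ of $\Dinf$ contains exactly one point at distance $\frac12$ from both poles, namely the common point of $D_\infty^{(i,+)}$ and $D_\infty^{(i,-)}$. To see this, recall from Lemma \ref{lemma: def of g}(1) that $D_\infty^{(i,-)}$ is a $\frac12$-scaled copy of $\Dinf$ containing $b$. So every other point of it is at distance $<\frac12$ from $b$, because in $\Dinf$ only $t$ is at distance $1$ from $b$. Symmetrically, every other point of $D_\infty^{(i,+)}$ is at distance $<\frac12$ from $t$.

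A pole-preserving isometry preserves distances to the poles. It would therefore have to send every point of $D_\beta^\omega$ equidistant from $t_\beta$ and $b_\beta$ to that single vertex. But for every $\beta\ge1$ there are infinitely many such points, for instance the $x_1^j$, $j\in\omega$, in $D_1^\omega$, or the $x_\beta^j$ when $\beta$ is a successor. Consequently, for $\beta_n\ge 1$ there is no pole-preserving isometric embedding of $D_{\beta_n}^\omega$ into $D_\infty^{i_n}$. The same argument rules out an embedding into the branch $0$ of $D_{\beta_n+1}^\omega$. Your fallback hypothesis, $\Psi_\alpha(\Da^\omega\setminus\{t_\alpha,b_\alpha\})\subset D_\infty^{0}$, already fails at $\alpha=1$.

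The analogy with $D_n^\omega\hookrightarrow D_{n+1}^\omega$ is also misleading. The canonical embedding sends the midpoint of branch $j$ of $D_n^\omega$ to the midpoint of branch $j$ of $D_{n+1}^\omega$, so it uses all branches, not just one.

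\textbf{The missing idea.} Each $D_{\beta_n}^\omega$ needs infinitely many branches, not one. Partition $\omega$ into infinite sets $(I_n)_{n\in\omega}$. By Lemma \ref{lemma: distances in D_infinity}, the metric of $\Dinf$ restricted to $\{t,b\}\cup\bigcup_{i\in I_n}D_\infty^{i}$ is determined by the branch metrics and the distances to $b$. Hence a bijection $I_n\to\omega$, together with the pole-preserving branch isometries of Lemma \ref{lemma: def of g}(2), gives a pole-preserving isometry $J_n$ of $\Dinf$ onto this sub-wedge.

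The maps $J_n\circ\Psi_{\beta_n}$ then glue to a map on $\Da^\omega$ that sends different $n$ into disjoint sets of branches. The cross-distances are handled exactly as you intended: match Lemma \ref{lemma: distances in D_alpha, alpha limit}(2) with Lemma \ref{lemma: distances in D_infinity}(3), using that the pole distances are preserved. This is precisely the paper's argument, which phrases it as an isometry between $\Dinf$ and the space $\Delta_\infty^\omega$ obtained by gluing countably many copies of $\Dinf$ at their poles.
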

	
	\begin{proof}
		We will prove it by induction on $\alpha$. This is clearly true for $\alpha=0$. Let $\alpha \in (0,\omega_1)$ and assume our statement holds for all $\mu < \alpha$. 
		
		\underline{Assume first that $\alpha = \beta + 1$.} For every $i \in \omega$, let $f_\alpha^{(i,\pm)} \colon \Da^{(i,\pm)} \to \Db^\omega$ be the canonical isometry between $(\Da^{(i,\pm)}, \da)$ and $(\Db^\omega, \frac{1}{2}\db)$. We define $\Psi_\alpha$ piecewise on each subdiamond as follows:
		\[
		\Psi_\alpha(x) \coloneqq g^{(i,\pm)} \circ \Psi_\beta \circ f_\alpha^{(i,\pm)}(x) \quad \text{ for } x \in \Da^{(i, \pm)},
		\]
		where $g^{(i,\pm)}$ has been defined in Lemma \ref{lemma: def of g} and $\Psi_\beta$ is the isometry provided by the induction hypothesis. It is a straightforward computation to check that:
		\begin{enumerate}
			\item[-] for every $i \in \omega$, $\Psi_\alpha(\xa{i})$ is well-defined, since we get the same value whether we consider $\xa{i}$ as an element of $\Da^{(i, +)}$ or $\Da^{(i, -)}$. Similarly, $\Psi_\alpha(\ta) = t$ and $\Psi_\alpha(\ba) = b$;
			
			\item[-] for every $z \in \Da^\omega$, $\dinf(\Psi_\alpha(z), b) = \da(z, \ba)$ and $\dinf(\Psi_\alpha(z), t) = \da(z, \ta)$; 
			
			\item[-] on each subdiamond $\Da^{(i,\pm)}$, $\Psi_\alpha$ is an isometry. 
		\end{enumerate}
		
		It remains to show that given $x, y \in \Da^\omega$, if $i, j \in \omega$ and $\varepsilon, \varepsilon' \in \set{+, -}$ are such that $x \in \Da^{(i,\varepsilon)}$ and $y \in \Da^{(j,\varepsilon')}$ with $i \neq j$ or $\varepsilon \neq \varepsilon'$, we have $\dinf(\Psi_\alpha(x), \Psi_\alpha(y)) = \dinf(x, y)$. This follows from the computation of the distances in $\Da^\omega$ and in $\Dinf$ (Lemmas \ref{lemma: distances in D_alpha, alpha successor} and \ref{lemma: distances in D_infinity}) and from the fact that $\Psi_\alpha$ preserves the distance to each pole.
		

		\underline{Assume now that $\alpha$ is a limit ordinal.} Let $(\beta_n)_{n \in \omega}$ be an enumeration of all ordinals less that $\alpha$. Write $\Da^\omega=\cup_{n\in \omega}\{n\}\times D_{\beta_n}^\omega$, identifying all $(n,t_{\beta_n})$ with $t_\alpha$ and all $(n,b_{\beta_n})$ with $b_\alpha$. By induction hypothesis, for every $n \in \omega$, there exists $\Psi_n \colon \{n\}\times D_{\beta_n}^\omega \to \Dinf$ such that $\Psi_n(n,t_{\beta_n})=t$ and $\Psi_n(n,b_{\beta_n})=b$. We now introduce the metric space $\Delta_\infty^\omega= \cup_{n\in \omega}\{n\}\times \Dinf$, identifying all $(n,t)$ with a point called $T$ and all $(n,b)$ with a point called $B$. Then the metric on  $\Delta_\infty^\omega$ is defined following exactly the same procedure as for $\Da^\omega$. For $n\in \omega$ and $x \in D_{\beta_n}^\omega$, we set $\Phi_\alpha(n,x)=\Psi_n(n,x)$. It is clear that $\Phi_\alpha$ is an isometry from $\Da^\omega$ into $\Delta_\infty^\omega$ so that $\Phi_\alpha(t_\alpha)=T$ and $\Phi_\alpha(b_\alpha)=B$. Consider now $\Delta_\infty^\omega$ as $\cup_{n,i\in \omega}\{n\}\times D_\infty^{i}$, where all the top (resp. bottom) points of the $\{n\}\times D_\infty^{i}$ are identified, and the metric defined as usual. Then, using a bijection from $\omega \times \omega$ onto $\omega$ and item (2) from Lemma \ref{lemma: def of g} we get that there is an isometry $\Psi$ from  $\Delta_\infty^\omega$ onto $\Dinf$ such that $\Psi(T)=t$ and $\Psi(B)=b$. The map $\Psi_\alpha=\Psi \circ \Phi_\alpha$ is the desired isometry. This completes the induction.
	\end{proof}

	Combining this with the embedding of $\Dinf$ into $L_1[0,1]$ (Theorem \ref{thm: Dinf into L1}), it follows:
	
	\begin{corollary}
		For every $\alpha \in (0, \omega_1)$, there exists a map $\Psi_\alpha \colon \Da^\omega \to L_1[0,1]$ such that, for every $x, y \in \Da^\omega$:
		\[
		\frac12\da(x,y) \leq \norm{\Psi_\alpha(x) - \Psi_\alpha(y)}_1 \leq \da(x,y).
		\]
	\end{corollary}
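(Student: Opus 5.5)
The corollary follows by composing the two maps already constructed. Fix $\alpha\in(0,\omega_1)$. Theorem \ref{t:UniversalityOfDinfty} gives an isometry $\Psi'_\alpha\colon \Da^\omega\to\Dinf$ with $\Psi'_\alpha(t_\alpha)=t$ and $\Psi'_\alpha(b_\alpha)=b$; only the isometry property is needed here. Theorem \ref{thm: Dinf into L1} gives a map $\Psi\colon\Dinf\to L_1[0,1]$ satisfying
\[
\tfrac12\dinf(u,v)\le\norm{\Psi(u)-\Psi(v)}_1\le\dinf(u,v)
\]
for all $u,v\in\Dinf$.

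We set $\Psi_\alpha:=\Psi\circ\Psi'_\alpha$. For $x,y\in\Da^\omega$, put $u=\Psi'_\alpha(x)$ and $v=\Psi'_\alpha(y)$. Since $\Psi'_\alpha$ is an isometry, $\dinf(u,v)=\da(x,y)$. Substituting into the displayed inequality gives
\[
\tfrac12\da(x,y)\le\norm{\Psi_\alpha(x)-\Psi_\alpha(y)}_1\le\da(x,y),
\]
which is the claim, with constants independent of $\alpha$.

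This step is routine. The substantive work sits in the two cited results. The first is the self-similarity of $\Dinf$ used in the limit case of Theorem \ref{t:UniversalityOfDinfty}, namely re-indexing $\omega\times\omega$ branches into $\omega$ branches via Lemma \ref{lemma: def of g}(2). The second is the compatibility $\Psi_{k+1}\restricted_{D_k^\omega}=\Psi_k$ of the maps from \cite{7} in Theorem \ref{thm: Dinf into L1}. Both are taken as established, so the only thing to check is that distortion is preserved under composition with an isometry, which is immediate.
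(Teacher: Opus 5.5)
Your proposal is correct and is exactly the paper's argument. The paper obtains the corollary by composing the isometry $\Da^\omega\to\Dinf$ from Theorem \ref{t:UniversalityOfDinfty} with the distortion-$2$ embedding $\Dinf\to L_1[0,1]$ from Theorem \ref{thm: Dinf into L1}, just as you do.
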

	
	We can now refine  the last sentences in Corollary \ref{c:obstructionToSPCP} and Question \ref{q:universalPCP}.
	
	\begin{corollary} Let $X$ be a Banach space. Assume that $L_1$ bi-Lipschitz embeds into $X$. Then $X$ fails the Slice PCP.
	\end{corollary}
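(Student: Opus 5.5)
The plan is to compose the bi-Lipschitz embeddings and then run the chain of implications already established for separable spaces. Let $h\colon L_1\to X$ be a bi-Lipschitz embedding. By the preceding corollary, for every $\alpha\in(0,\omega_1)$ there is $\Psi_\alpha\colon D_\alpha^\omega\to L_1[0,1]$ with distortion at most $2$. Hence $h\circ\Psi_\alpha\colon D_\alpha^\omega\to X$ is bi-Lipschitz, with distortion bounded by $2\,\mathrm{dist}(h)$. In particular it is a sub-Lipschitz embedding, since the two-sided inequality then holds on all pairs and so on $AP_\alpha^\omega$.

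Next I reduce to the separable setting so that Theorem \ref{t:ISTP_ST(X)} applies. Replace $X$ by $X_0$, the closed linear span of $h(L_1[0,1])$. This space is separable because $L_1[0,1]$ is separable and $h$ is continuous. Every $h\circ\Psi_\alpha$ takes values in $X_0$, so $D_\alpha^\omega$ sub-Lipschitz embeds into $X_0$ for every countable $\alpha$. By Proposition \ref{p:CountablyBranchingSprawling}, after normalizing the embedding so that the upper constant is $1$, we get $ST(X_0)>\alpha$ for all $\alpha<\omega_1$, that is, $ST(X_0)\ge\omega_1$. Theorem \ref{t:ISTP_ST(X)} then shows that $X_0$ has the Infinite Sprawling Tree Property. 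Proposition \ref{p:C(X)_ST(X)} gives $C(X_0)\ge ST(X_0)\ge\omega_1$, and Proposition \ref{p:SlicePCP} shows that $X_0$ fails the Slice PCP.

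It remains to pass from $X_0$ back to $X$. The Slice PCP is inherited by closed subspaces. Indeed, take a bounded $A\subset X_0$ and an open half-space of $X$ given by $x^*\in X^*$. Its trace on $X_0$ is an open half-space of $X_0$ defined by $x^*\restricted_{X_0}$. Weak neighborhoods in $X$ also trace to weak neighborhoods in $X_0$, by Hahn--Banach, and diameters are computed in the same norm. So a witness to the Slice PCP in $X$ for the set $A$ yields a witness in $X_0$. Since $X_0$ fails the Slice PCP, so does $X$.

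The only step that needs care is this last hereditary argument. The intersection $S\cap A$ is unchanged when $S$ is restricted to $X_0$, because $A\subset X_0$. Likewise, for each $x\in S\cap A$, the trace $V\cap X_0$ of a weak neighborhood $V$ in $X$ is a weak neighborhood in $X_0$ with $(V\cap X_0)\cap A=V\cap A$, so the diameter condition is preserved. Everything else is a direct concatenation of earlier results.
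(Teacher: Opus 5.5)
Your proof is correct and is essentially the paper's implicit argument: compose the distortion-$2$ embeddings $D_\alpha^\omega\to L_1$ with the embedding of $L_1$ into $X$, then run the chain from Corollary~\ref{c:obstructionToSPCP} (Propositions~\ref{p:CountablyBranchingSprawling}, \ref{p:C(X)_ST(X)} and \ref{p:SlicePCP}) inside a separable subspace. You also supply two steps the paper leaves tacit, since Corollary~\ref{c:obstructionToSPCP} is stated only for separable spaces: the passage to the separable closed span $X_0$, and the check that the Slice PCP passes to closed subspaces. In that check the only case needing a word is $x^*\restricted_{X_0}=0$, and it is harmless, because the paper's definition of an open half-space allows $x^*=0$.
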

	
	\begin{question}
		Let $X$ be a Banach space and assume that $L_1$ bi-Lipschitz embeds into $X$. Does it imply that $X$ fails the PCP?
	\end{question}
	
	In relation with these questions, we recall that if $c_0$ bi-Lipschitz embeds into a Banach space $X$, then $X$ must contain linearly and uniformly the $\ell_\infty^n$, $n\in \omega$ (or equivalently must have trivial cotype). Since $L_1$ has cotype 2, $c_0$ does not bi-Lipschitz embed into $L_1$. 

	\subsection{Concluding remarks}
	
	The definition of the limit dyadic diamond, due to Ostrovskii \cite{Ostrovskii}, that we will denote $D_\infty^2$ in this paper, is completely analogous to the definition of $D_\infty^\omega$: just consider the inductive limit of $D_n^2$ instead of $D_n^\omega$. Then, for $\kappa=2$, or $\kappa=\omega$, we can define the active pairs in $D_\infty^\kappa$ as $AP_\infty^\kappa=\bigcup_{n=0}^\infty AP_n^\kappa$. Then, using $AP_\infty^\kappa$, we can define the notion of a sub-Lipschitz embedding of $D_\infty^\kappa$. Combining our results with \cite{Ostrovskii}, we can complete our non linear characterizations of the Infinite Tree Property and the Infinite Sprawling Tree Property.
	
	\begin{theorem} Let $X$ be a separable Banach space. The following assertions are equivalent.
		\begin{enumerate}
			\item For all $\alpha<\omega_1$, $D_\alpha^2$ sub-Lipschitz embeds into $X$.
			\item For all $\alpha<\omega_1$, $X$ has the $\alpha$-tree property.
			\item $X$ has the Infinite Tree Property.
			\item $D_\infty^2$ sub-Lipschitz embeds into $X$.
		\end{enumerate}
	\end{theorem}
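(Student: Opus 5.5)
The plan is to prove the cycle of implications (1)$\Leftrightarrow$(2), (2)$\Leftrightarrow$(3), and (3)$\Leftrightarrow$(4), using the results already established together with Ostrovskii's work.

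\emph{(1)$\Leftrightarrow$(2).} This is Theorem~\ref{t:AlphaTreeCharacterization} applied for each $\alpha\in(0,\omega_1)$. The case $\alpha=0$ is trivial on both sides, since $D_0^2$ is two points and a $\delta$-$0$-tree is a single point of $B_X$.

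\emph{(2)$\Leftrightarrow$(3).} Assume (2) and suppose $X$ fails the Infinite Tree Property. By Theorem~\ref{t:ITP-DT(X)}, $DT(X)=\alpha_0<\omega_1$. So for every $\delta>0$ we have $DT(X,\delta)\le\alpha_0$, that is, $B_X$ contains no $\delta$-$\alpha_0$-tree. This contradicts the $\alpha_0$-tree property. Conversely, if $X$ has the Infinite Tree Property, then by Lemma~\ref{l:ITP_illfounded} there is $\delta>0$ with $\mathcal{DT}(X,\delta)$ ill founded. An infinite branch survives every derivation, so $h(\mathcal{DT}(X,\delta))=\infty$. Proposition~\ref{p:DT=DT} then gives $DT(X,\delta)=\infty$, meaning $B_X$ contains a $\delta$-$\alpha$-tree for every $\alpha<\omega_1$, which is (2). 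Separability is used only in the first direction, through Theorem~\ref{t:ITP-DT(X)}.

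\emph{(3)$\Rightarrow$(4).} Take an infinite $\delta$-tree $(w_s)_{s\in 2^{<\omega}}$ in $B_X$. After rescaling and translating we may assume $\delta\le \frac12$ and $\|w_s\|\ge\frac12$ for all $s$. Run the construction in the proof of Proposition~\ref{p:TreeToDyadicDiamond} for each finite $n$ to get $f_n:D_n^2\to X$ with $f_n(b_n)=0$ and $f_n(t_n)=w_\varnothing$. At level $n+1$, the copies $D_n^{2(i,\pm)}$ are built from the maps coming from the subtrees rooted at $w_{(i)}$, combined via $x\mapsto\frac12 f$ and $x\mapsto \frac12(w_{(i)}+f)$. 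The main point to check is \emph{compatibility}: $f_{n+1}$ restricted to $D_n^2$ (embedded canonically in $D_{n+1}^2$) should equal $f_n$.

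This compatibility is the step I expect to be the main obstacle, because it depends on how the canonical inclusion $D_n^2\hookrightarrow D_{n+1}^2$ is realized. The inclusion refines the diamond at the bottom level, whereas the recursion of Proposition~\ref{p:TreeToDyadicDiamond} substitutes at the top. To handle this, I would define $f$ directly on $D_\infty^2$ by an explicit formula. A vertex of $D_n^2$ at a dyadic height $r$ along a branch coded by $A\in\{0,1\}^{\le n}$ should be sent to a partial sum of the tree increments along the branch determined by $A$. I would then verify by induction on $n$ that this formula agrees with the recursive construction, so that it is automatically compatible with the inclusions. Once compatibility holds, the estimates on $AP_\infty^2=\bigcup_n AP_n^2$ are exactly those already proved at each finite level, with constants $\delta$ and $1$.

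If this explicit bookkeeping becomes unmanageable, I would fall back on citing Ostrovskii~\cite{Ostrovskii}. His characterization of the Infinite Tree Property via partially bi-Lipschitz embeddings of the limit binary diamond gives (3)$\Leftrightarrow$(4) directly.

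\emph{(4)$\Rightarrow$(3).} Let $f:D_\infty^2\to X$ satisfy the two-sided estimates on $AP_\infty^2$, normalized so that the upper constant is $1$. Repeat the successor step of Proposition~\ref{p:DyadicDiamondToTree} inside the single space $D_\infty^2$, using its self-similarity. The key observation is that $f_{tb}=\frac12(f_{t x^i}+f_{x^i b})$ for each midpoint $x^i$, and the active pair $\{x^0,x^1\}$ forces one of the two splittings to have separation at least $2A$. Moreover, each half is again a scaled copy of $D_\infty^2$ carrying the same estimates. Iterating this choice at every node builds a full infinite $A$-tree in $B_X$ rooted at $f_{tb}$. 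All choices happen inside one map, so there is no compatibility issue in this direction, and we obtain (3).
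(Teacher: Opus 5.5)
Your proposal is correct. For (1)$\Leftrightarrow$(2) and (2)$\Leftrightarrow$(3) you follow the paper: Theorem~\ref{t:AlphaTreeCharacterization}, and Theorem~\ref{t:ITP-DT(X)} unpacked through Lemma~\ref{l:ITP_illfounded} and Proposition~\ref{p:DT=DT}.

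The real difference is in (3)$\Leftrightarrow$(4). The paper simply quotes Theorem~1.12 of \cite{Ostrovskii}. You instead reprove it by running Propositions~\ref{p:DyadicDiamondToTree} and~\ref{p:TreeToDyadicDiamond} on the limit diamond.

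Your (4)$\Rightarrow$(3) works as written. Three facts carry it:
\begin{itemize}
\item the identity $f_{tb}=\frac12(f_{tx^i}+f_{x^ib})$;
\item the bound $\|f_{tx^0}-f_{x^0b}\|+\|f_{tx^1}-f_{x^1b}\|\ge 4\|f(x^0)-f(x^1)\|\ge 4A$;
\item the halving bijections $D_\infty^2\to D_\infty^{2(i,\pm)}$ send $AP_\infty^2$ into $AP_\infty^2$, because $AP_{n+1}^2$ contains the copies of $AP_n^2$.
\end{itemize}
Together they let you choose a good midpoint at every node of $2^{<\omega}$ inside a single map.

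For (3)$\Rightarrow$(4), the compatibility you flag as the main obstacle is routine and needs no explicit formula. The canonical inclusion $\iota_n\colon D_n^2\hookrightarrow D_{n+1}^2$ maps each copy $D_{n-1}^{2(i,\pm)}$ into $D_n^{2(i,\pm)}$ via $\iota_{n-1}$. The reason is that refining every edge at the bottom and substituting into the edges of $D_1^2$ at the top produce the same graph, with the same retained vertices, by associativity of edge substitution.

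Write $F_n[w]$ for the map produced from an infinite tree $w$, and $w^{(i)}$ for its subtree rooted at $(i)$. Then $F_{n+1}[w]\circ\iota_n=F_n[w]$ follows by induction on $n$, simultaneously for all infinite trees:
\begin{itemize}
\item on $D_{n-1}^{2(i,-)}$, the left side is $\frac12F_n[w^{(i)}]\circ\iota_{n-1}$ and the right side is $\frac12F_{n-1}[w^{(i)}]$;
\item on $D_{n-1}^{2(i,+)}$, one compares $\frac12(w_{(i)}+F_n[w^{(1-i)}]\circ\iota_{n-1})$ with $\frac12(w_{(i)}+F_{n-1}[w^{(1-i)}])$.
\end{itemize}
The estimates on $AP_\infty^2=\bigcup_n AP_n^2$ are then inherited from the finite levels.

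What your route buys is a self-contained proof of (3)$\Leftrightarrow$(4) that uses neither separability nor the ordinal indices. The paper's route is shorter but rests on the external theorem.
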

	
	\begin{proof}
		The equivalence $(1) \Leftrightarrow (2)$ is Theorem \ref{t:AlphaTreeCharacterization}, the equivalence $(2) \Leftrightarrow (3)$ is Theorem \ref{t:ITP-DT(X)}, and the equivalence $(3) \Leftrightarrow (4)$ is Theorem 1.12 in \cite{Ostrovskii}. 
	\end{proof}
	
	Similarly, we have. 
	\begin{theorem} Let $X$ be a separable Banach space. The following assertions are equivalent.
		\begin{enumerate}
			\item For all $\alpha<\omega_1$, $D_\alpha^\omega$ sub-Lipschitz embeds into $X$.
			\item For all $\alpha<\omega_1$, $X$ has the $\alpha$-sprawling-tree property.
			\item $X$ has the Infinite Sprawling Tree Property.
			\item $D_\infty^\omega$ sub-Lipschitz embeds into $X$.
		\end{enumerate}
	\end{theorem}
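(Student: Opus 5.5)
The plan is to chain the equivalences, exactly as in the preceding dyadic theorem. The equivalence $(1)\Leftrightarrow(2)$ is Theorem \ref{t:AlphaSprawlingTreeCharacterization}, applied for each $\alpha\in(0,\omega_1)$; the case $\alpha=0$ is trivial on both sides. For $(2)\Leftrightarrow(3)$ we use Theorem \ref{t:ISTP_ST(X)} together with Definition \ref{d:ST-index}. If (3) holds, then $ST(X)=\infty$, and so for every $\alpha<\omega_1$ there is some $\delta>0$ with $ST(X,\delta)>\alpha$. Since $\alpha$-sprawling trees restrict to $\alpha'$-sprawling trees for $\alpha'\le\alpha$ (via the identifications in the proof of Proposition \ref{ST=ST}), this means $B_X$ contains a \dast, which is (2). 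Conversely, if (2) holds, then $ST(X)\ge\omega_1$, so $ST(X)$ is not countable and $X$ has the ISTP by Theorem \ref{t:ISTP_ST(X)}.

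The remaining equivalence is $(3)\Leftrightarrow(4)$. The paper has no analogue of Ostrovskii's Theorem 1.12 for it, so we prove it directly by mimicking Propositions \ref{p:CountablyBranchingSprawling} and \ref{p:subLipEmbeddingOmegaDiamond} at finite levels and passing to the inductive limit.

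For $(3)\Rightarrow(4)$, take $\delta>0$ and $f:A_\infty\to B_X$ with $f\restricted_{A_n}\in\BST(X,2\delta)$ for all $n$. Rescale and translate it, as in the proof of Proposition \ref{p:subLipEmbeddingOmegaDiamond}, to an infinite $\delta$-sprawling tree $(w_s)_{s\in A_\infty}$ with $\frac12\le\|w_s\|\le1$. The inductive construction of that proof defines maps $f_n:D_n^\omega\to X$ with $f_n(b)=0$ and $f_n(t)=w_\varnothing$, using the subtrees $(w_{(\varepsilon,i)^\smallfrown s})$ at each step. One must check that these maps are compatible with the canonical isometric inclusions $D_n^\omega\subset D_{n+1}^\omega$. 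The points of $D_n^\omega$ inside $D_{n+1}^\omega$ are the vertices of the coarser level. Their images are determined by the values $w_s$ with $|s|\le n$, through the same affine recursion $f\restricted_{D^{(i,-)}}=\frac12 f^{(0,i)}$, $f\restricted_{D^{(i,+)}}=\frac12(w_{(0,i)}+f^{(1,i)})$. If needed, we will instead define the map directly on $D_\infty^\omega$ by this self-similar recursion, using Lemma \ref{lemma: def of g}. The estimates on $AP_\infty^\omega=\bigcup_n AP_n^\omega$ then hold with constants $\frac\delta2$ and $1$.

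For $(4)\Rightarrow(3)$, take a sub-Lipschitz $f:D_\infty^\omega\to X$, normalized so that $A\,d\le\|f(u)-f(v)\|\le d$ on active pairs. Following the proof of Proposition \ref{p:CountablyBranchingSprawling}, assign to each $s\in A_\infty$ the rescaled difference quotient $f_{uv}$, where $\{u,v\}$ are the poles of the subdiamond indexed by $s$. At each node $s$, the poles $u,v$ and the midpoints $x^i$ of that subdiamond satisfy $f_{uv}=\frac12(f_{ux^i}+f_{x^iv})$. Moreover, $\|f_{ux^i}-f_{ux^j}\|\ge2A$, because $\{x^i,x^j\}$ is an active pair at distance $d(u,v)$. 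Hence every restriction to $A_n$ lies in $\BST(X,2A)$ and $X$ has the ISTP; separability of $X$ is only used in $(2)\Leftrightarrow(3)$.

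The main obstacle is the compatibility and indexing in $(3)\Rightarrow(4)$. The labels $(0,i)$ and $(1,i)$ must be matched with the subdiamonds $D_\infty^{(i,-)}$ and $D_\infty^{(i,+)}$ consistently across all levels, so that the finite-level maps glue into a single map on $D_\infty^\omega$.
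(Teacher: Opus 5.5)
Your proposal is correct and follows the paper's proof. The steps $(1)\Leftrightarrow(2)$ and $(2)\Leftrightarrow(3)$ are exactly Theorems \ref{t:AlphaSprawlingTreeCharacterization} and \ref{t:ISTP_ST(X)}. For $(3)\Leftrightarrow(4)$, which the paper leaves as an undetailed adaptation of Ostrovskii's Theorem 1.12, your argument is precisely that adaptation. You use difference quotients over subdiamonds for $(4)\Rightarrow(3)$. For $(3)\Rightarrow(4)$ you glue the finite-level maps of Proposition \ref{p:subLipEmbeddingOmegaDiamond} along the canonical inclusions; compatibility holds by induction, since the level-$n$ map depends only on $(w_s)_{|s|\le n}$ through the same recursion.

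The only unnecessary point is the monotonicity remark in $(3)\Rightarrow(2)$: by definition of the infimum, $ST(X,\delta)>\alpha$ already means that $B_X$ contains a $\delta$-$\alpha$-sprawling tree.
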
   
	
	\begin{proof}
		The equivalence $(1) \Leftrightarrow (2)$ is Theorem \ref{t:AlphaSprawlingTreeCharacterization} and the equivalence $(2) \Leftrightarrow (3)$ is Theorem \ref{t:ISTP_ST(X)}. Finally, the  proof of the equivalence $(3) \Leftrightarrow (4)$ is an adaptation of the proof of Theorem 1.12 in \cite{Ostrovskii}, that we shall not detail here. 
	\end{proof}
	
	\section*{Acknowledgments}
	
	The three authors were partially supported by the French ANR project ``Comop'' ANR-24-CE40-0892-01. The LmB receives support from the EIPHI Graduate School (contract ANR-17-EURE-0002).


\end{document}